\newcounter{theoremintro}
\newtheorem{theorem}{Theorem}
\newtheorem{theoremi}[theoremintro]{Theorem}
\newtheorem{prop}[theorem]{Proposition}
\newtheorem{lemma}[theorem]{Lemma}
\newtheorem{corollary}[theorem]{Corollary}
\newtheorem{corollaryi}[theoremintro]{Corollary}
\theoremstyle{definition}
\newtheorem{definition}[theorem]{Definition}
\newtheorem{definitioni}[theoremintro]{Definition}
\newtheorem{notation}[theorem]{Notation}
\theoremstyle{remark}
\newtheorem{remark}[theorem]{Remark}
\setlist{nosep}
\numberwithin{equation}{section}
\numberwithin{theorem}{section}
\newcommand{\overbar}[1]{\mkern 1.5mu\overline{\mkern-1.5mu#1\mkern-1.5mu}\mkern 1.5mu}
\newcommand{\R}{\mathbb{R}}
\newcommand{\N}{\mathbb{N}}
\newcommand{\Q}{\mathbb{Q}}
\newcommand{\Z}{\mathbb{Z}}
\newcommand{\C}{\mathbb{C}}
\renewcommand{\tilde}{\widetilde}
\newcommand{\acts}{\curvearrowright}
\newcommand{\e}{\varepsilon}
\newcommand{\ssubset}{\subset\joinrel\subset}
\newcommand{\cstar}{$\mathrm{C}^*$}
\title{Equivariant property Gamma and the tracial local-to-global principle for \cstar-dynamics}
\author{\begin{tabular}{ccc} Gábor Szabó\thanks{Supported by BOF project C14/19/088 funded by the research council of KU Leuven, and research project G085020N funded by the Research Foundation Flanders (FWO).} & \& & Lise Wouters\thanks{Supported by PhD-grant 11B6620N funded by the Research Foundation Flanders (FWO).} \end{tabular} \medskip\\
KU Leuven, Department of Mathematics, \\
Celestijnenlaan 200B, 3001 Leuven, Belgium \medskip\\ 
\begin{tabular}{ccc} \texttt{gabor.szabo@kuleuven.be} & & \texttt{lise.wouters@kuleuven.be} \end{tabular}}
\date{\vspace{-5ex}}
\begin{document}
\maketitle

\begin{abstract}
\noindent We consider the notion of equivariant uniform property Gamma for actions of countable discrete groups on \cstar-algebras that admit traces.
In case the group is amenable and the \cstar-algebra has a compact tracial state space, we prove that this property implies a kind of tracial local-to-global principle for the \cstar-dynamical system, generalizing a recent such principle for \cstar-algebras exhibited in work of Castillejos et al.
For actions on simple nuclear $\mathcal{Z}$-stable \cstar-algebras, we use this to prove that equivariant uniform property Gamma is equivalent to equivariant $\mathcal{Z}$-stability, generalizing a result of Gardella--Hirshberg--Vaccaro.
\end{abstract}

\renewcommand{\baselinestretch}{0.5}\normalsize
\tableofcontents
\renewcommand{\baselinestretch}{1.00}\normalsize

\addcontentsline{toc}{section}{Introduction}
\section*{Introduction}

This article aims to extend the fine structure theory for actions of amenable groups on finite simple \cstar-algebras, in particular those covered by the Elliott program.
The classification of such \cstar-algebras, which mirrors the celebrated Connes--Haagerup classification of injective factors \cite{Connes76, Haagerup87}, has been nearly completed as a culmination of numerous articles by many researchers over the past decade, such as \cite{ElliottGongLinNiu15, TikuisisWhiteWinter17, GongLinNiu20, GongLinNiu20-2, ElliottGongLinNiu20, GongLin20, GongLin22, Schafhauser20}.
Furthermore, Carrion--Gabe--Schafhauser--Tikuisis--White have announced an eagerly awaited new conceptual proof of the classification theorem \cite{CGSTW}, which does not directly rely on the prior works related to tracial approximation.
By now it has been recognized that the next natural step is to understand the underlying symmetries of classifiable \cstar-algebras, which can be interpreted as the goal to classify group actions on them.
This mirrors the work of Connes, Jones, Ocneanu and others in \cite{Connes77, Jones80, Ocneanu85, SutherlandTakesaki89, KawahigashiSutherlandTakesaki92, KatayamaSutherlandTakesaki98, Masuda07, Masuda13}.
When it comes to classifying group actions on \cstar-algebras, a number of researchers (predominantly from the Japanese community) have introduced many sophisticated methods over the years to classify specific kinds of group actions utilizing certain Rokhlin-type properties \cite{Kishimoto95, EvansKishimoto97, Kishimoto98, Kishimoto98II, Nakamura00, Izumi04, Izumi04II, KatsuraMatui08, Matui08, Matui10, Matui11, IzumiMatui10, Sato10, Szabo21R, IzumiMatui21, IzumiMatui21_2}.
In direct comparison to the generality achieved for actions on von Neumann algebras, the implementation of the involved methods (in particular the Evans--Kishimoto intertwining argument) for actions on \cstar-algebras remained challenging beyond some specific classes of actions or acting groups.

To combat these methodological obstacles, the first named author introduced a categorical framework in \cite{Szabo21cc} to open up the classification of \cstar-dynamics up to cocycle conjugacy to methodology directly inspired by \cite{Elliott10}.
For actions on classifiable \cstar-algebras without traces, the so-called Kirchberg algebras \cite{Kirchberg95}, this idea led to the recent breakthrough in \cite{GabeSzabo22su, GabeSzabo22kp}.
The main result of said work implies that, given any countable amenable group $G$, any outer $G$-action on a Kirchberg algebra is uniquely determined by its $KK^G$-class up to cocycle conjugacy.

Although one might be tempted to guess that similar breakthrough results ought to be in reach for actions on finite classifiable \cstar-algebras, one still has a long way to go before such a goal can be achieved.
In analogy to the original obstacles to classify all simple nuclear \cstar-algebras \cite{Villadsen99, Rordam03, Toms08}, there are basic structural questions to be settled before a classification theory such as in \cite{GabeSzabo22kp} can be attempted on finite \cstar-algebras.
When concerned with just the underlying \cstar-algebras, this is already a serious challenge.
On the one hand, there is the question whether the \cstar-algebras under consideration automatically satisfy certain properties predicted by classification.
For the purpose of this article we highlight the property of Jiang--Su stability.
If $\mathcal Z$ is the so-called Jiang--Su algebra from \cite{JiangSu99}, then a \cstar-algebra $A$ is called Jiang--Su stable or $\mathcal Z$-stable when $A\cong A\otimes\mathcal Z$.
Although this might seem like a technical property at first glance, it becomes natural with more context: Firstly, $\mathcal Z$ behaves (as a \cstar-algebra) very much like an infinite-dimensional version of the complex numbers $\mathbb C$, for instance at the level of $K$-theory and traces.
Secondly, there is by now a pile of evidence that $\mathcal Z$-stability holds automatically for many \cstar-algebras arising from various constructions like the crossed product \cite{TomsWinter13, Kerr20, KerrSzabo20, KerrNaryshkin21}.
The discovery that $\mathcal Z$-stability does in fact \emph{not} hold automatically for all simple nuclear \cstar-algebras has, among other things, led to the nearly proven Toms--Winter conjecture, which asserts that $\mathcal Z$-stability can only hold or fail in conjunction with some other, a priori different, regularity conditions.

On the other hand, there is the question about precisely what additional structural consequences (not necessarily equivalent characterizations) are shared by Jiang--Su stable \cstar-algebras, a good example of which is the recent breakthrough work \cite{CETWW21} (which was in turn continuing work from \cite{MatuiSato14b, BBSTWW, SatoWhiteWinter15}).
The most novel technical achievement therein can be identified as the \emph{tracial local-to-global principle} for \cstar-algebras satisfying the so-called uniform property Gamma, which is a weaker assumption than Jiang--Su stability.
Said principle concerns the behavior of elements in a given \cstar-algebra $A$ with respect to the 2-seminorm $\|\cdot\|_{2,\tau}$ induced by individual tracial states $\tau$ on $A$ on the one hand, and the behavior with respect to the uniform tracial 2-norm $\|\cdot\|_{u}=\sup_{\tau} \|\cdot\|_{2,\tau}$ on the other hand.
While the latter is often of interest in the deeper structure and classification theory for \cstar-algebras, the former can be understood by studying the tracial von Neumann algebra $\pi_\tau(A)''$ arising as the weak closure of $A$ under the GNS representation associated to an individual trace $\tau$.
In a nutshell, the tracial local-to-global principle asserts that any suitable behavior that can be observed one trace at a time can also be observed uniformly, which often allows one to \emph{transfer}, so to speak, phenomena from von Neumann algebras to the \cstar-algebraic context.
This became in turn the main technical driving force behind the main results in \cite{CETWW21, CETW22}, which can be summarized by saying that the Toms--Winter conjecture holds for all simple nuclear \cstar-algebras having the uniform property Gamma.
Whether the latter property automatically holds for simple nuclear non-elementary \cstar-algebras is presently unknown, but is of high interest as it currently represents the main obstacle towards a full solution to the Toms--Winter conjecture.

When we turn our attention to \cstar-dynamics instead of \cstar-algebras, we can (and should) study analogous well-behavedness properties as for \cstar-algebras, one important example of which is equivariant Jiang--Su stability.
An action $\alpha: G\curvearrowright A$ on a \cstar-algebra is called (equivariantly) Jiang--Su stable or $\mathcal Z$-stable, if $\alpha$ is cocycle conjugate to $\alpha\otimes\operatorname{id}_{\mathcal Z}: G\curvearrowright A\otimes\mathcal Z$.
Assuming $G$ is amenable, it is presently open if this happens automatically when $A$ is simple nuclear and $\mathcal Z$-stable; cf.\ \cite[Conjecture A]{Szabo21si}.
We note that the analogous question for non-amenable groups is known to have a negative answer \cite{Jones83co, GardellaLupini21}, although recent insights as in \cite{Suzuki21, EGKNV22} leave some hope for the class of amenable actions of non-amenable groups, which we shall not investigate in this article.
If one stresses the point again that $\mathcal Z$ essentially looks like an infinite-dimensional version of $\mathbb C$, it should not come as a surprise that we can only expect a classification of $G$-actions on classifiable \cstar-algebras by reasonable invariants if they are equivariantly $\mathcal Z$-stable.
The existing work in this direction seems to indicate that equivariant Jiang--Su stability may indeed hold automatically whenever one can reasonably expect it \cite{MatuiSato12, MatuiSato14, Szabo18kp, Sato19, GardellaHirshbergVaccaro, Wouters21}.
The other possible line of investigation, namely further structural consequences of equivariant Jiang--Su stability for group actions, was initiated in \cite{GardellaHirshbergVaccaro} as a direct adaption of techniques in \cite{CETWW21}, albeit under rather restrictive assumptions on the actions.
We recall one of the key concepts from both said paper and the present work, but restrict ourselves in this introduction to the case of unital simple \cstar-algebras for convenience, despite actually investigating the concept in broader generality.\footnote{The reader might consult Definitions \ref{definition:tracial-norms}, \ref{definition:tracial_ultrapowers}, and \ref{definition:equivariantGamma} and compare with \cite[Definition 3.1]{GardellaHirshbergVaccaro}.}

\begin{definitioni}
Let $G$ be a countable discrete group.
Let $A$ be a separable unital simple \cstar-algebra such that all 2-quasitraces on $A$ are traces, and $T(A)\neq\emptyset$.
Given a free ultrafilter $\omega$ on $\mathbb N$, form the uniform tracial ultrapower $A^\omega$ of $A$.
An action $\alpha: G\curvearrowright A$ is said to have equivariant uniform property Gamma, if for any $k\geq 2$, there exist pairwise orthogonal projections $p_1,\dots,p_k\in (A^\omega\cap A')^{\alpha^\omega}$ such that
\[
\tau(ap_j)=\frac1k\tau(a) \quad\text{for all } j=1,\dots,k,\ a\in A,\ \tau\in T_\omega(A).
\]
\end{definitioni}

One can observe easily (Remark \ref{remark:Z-stability_implies_Gamma}) that equivariant uniform property Gamma is always implied by equivariant Jiang--Su stability.
This is important to note because it means that all non-trivial consequences of this property will also hold for Jiang--Su stable actions, and may in fact turn out to be useful for subsequent applications.
The most important technical consequence relevant to the present work is the tracial local-to-global principle for \cstar-dynamical systems over amenable groups.
An ad-hoc version of this principle appeared in \cite[Lemma 4.5]{GardellaHirshbergVaccaro}, but is only applicable (cf.\ \cite[Remark 2.2]{GardellaHirshbergVaccaro}) for actions that induce an action on tracial states with finite orbits of uniformly bounded size.
This assumption appeared not only as a prerequisite for the theory in \cite{GardellaHirshbergVaccaro}, but is implicitly crucial for the usefulness of the conclusion of this ad-hoc principle, which only involves tracial states that are fixed by the action.\footnote{This uses that if $\alpha: G\curvearrowright A$ is assumed to induce an action $G\curvearrowright T(A)$ with finite orbits of uniformly bounded cardinality $M>0$, then one has $\|\cdot\|_{2,u}\leq M\|\cdot\|_{2,T(A)^\alpha}$ as norms on $A$.}
In this article we aim to remove any assumptions about how actions $G\curvearrowright A$ are allowed to act on the traces of $A$, as well as strengthen the conclusion of the tracial local-to-global principle compared to \cite{GardellaHirshbergVaccaro}, in such a way as to directly generalize and strengthen the known principle for \cstar-algebras \cite[Lemma 4.1]{CETWW21}.
In addition to formulating our result in the language of $*$-polynomials as all prior papers did, we would also like to promote the following (formally equivalent) formulation of the tracial local-to-global principle for \cstar-dynamics, which becomes our main technical result.
We not only restrict ourselves for the moment to actions on unital simple nuclear \cstar-algebras (similarly to before) for convenience, but give a slightly weaker version here that is easier to state.
We treat a stronger version of the statement in broader generality in the main body of the paper; see Theorems \ref{theorem:local-to-global} and \ref{theorem:reformulation_local_to_global}.

\begin{theoremi} \label{theorem-B}
Let $G$ be a countable amenable group and $A$ a separable unital simple nuclear \cstar-algebra.
Let $\alpha: G\curvearrowright A$ be an action with equivariant uniform property Gamma.
Let $\delta: G\curvearrowright D$ be an action on a separable \cstar-algebra and $B\subseteq D$ a $\delta$-invariant \cstar-subalgebra.
Suppose that $\varphi: (B,\delta)\to (A^\omega,\alpha^\omega)$ is an equivariant $*$-homomorphism.
Then $\varphi$ extends to an equivariant $*$-homomorphism $\bar{\varphi}: (D,\delta)\to (A^\omega,\alpha^\omega)$ if and only if for every trace $\tau\in\overline{T_\omega(A)}^{w^*}$, there exists a $*$-homomorphism $\varphi^\tau: (D,\delta)\to (\pi_\tau^{\alpha^\omega}(A^\omega)'',\alpha^\omega)$ with $\varphi^\tau|_B=\pi_\tau^{\alpha^\omega}\circ\varphi$.\footnote{In actuality one may even allow $\varphi^\tau$ to have range in the tracial ultrapower of this von Neumann algebra, but this requires more cumbersome notation to state rigorously.}
Here $\pi_\tau$ denotes the GNS representation associated to the trace $\tau$ and $\pi_\tau^{\alpha^\omega}$ denotes the direct sum representation $\bigoplus_{g\in G} \pi_\tau\circ\alpha^\omega_{g^{-1}}$.
\end{theoremi}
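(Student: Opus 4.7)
The forward direction is immediate: given a global extension $\bar\varphi$, one simply sets $\varphi^\tau := \pi_\tau^{\alpha^\omega}\circ\bar\varphi$, which satisfies all required properties by construction. The substance of the theorem lies in the converse, and I describe a plan for that below.

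The plan is to reduce the problem to an approximate version via a standard reindexation/saturation argument. Using that the uniform tracial ultrapower $A^\omega$ satisfies a form of countable saturation for $\|\cdot\|_{2,u}$, it suffices to prove the following approximate extension statement: for every finite $F\subseteq D$, $F'\subseteq G$, $F''\subseteq B$ and every $\varepsilon>0$, there exists a c.p.c.\ map $\psi:D\to A^\omega$ with $\|\psi(dd')-\psi(d)\psi(d')\|_{2,u}<\varepsilon$, $\|\alpha^\omega_g\psi(d)-\psi(\delta_g d)\|_{2,u}<\varepsilon$, and $\|\psi(b)-\varphi(b)\|_{2,u}<\varepsilon$ for all $d,d'\in F$, $g\in F'$, $b\in F''$. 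Once this is established, a diagonal-sequence construction using separability of $D$ and the exhaustion of $G$ by finite sets produces $\bar\varphi$.

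To construct $\psi$, I would use $w^*$-compactness of $\overline{T_\omega(A)}^{w^*}$ to pick a finite subset $\{\tau_1,\dots,\tau_n\}$ that is dense with respect to the finite set of relevant test elements. For each $i$, the hypothesized local extension $\varphi^{\tau_i}:D\to\pi_{\tau_i}^{\alpha^\omega}(A^\omega)''$, combined with Kaplansky density, yields representatives $a_{i,d}\in A^\omega$ approximating $\varphi^{\tau_i}(d)$ in $\|\cdot\|_{2,\tau_i}$ for $d\in F$, and similarly approximating $\varphi(b)$ for $b\in F''$. Applying equivariant uniform property Gamma at a sufficiently large scale $k$ produces pairwise orthogonal projections $p_1,\dots,p_k\in(A^\omega\cap A')^{\alpha^\omega}$ summing to $1$ on which every trace in $T_\omega(A)$ — and hence, by $w^*$-continuity, every trace in its closure — equidistributes. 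Grouping them into sums $P_i=\sum_{j\in I_i}p_j$ with cardinalities $|I_i|/k$ approximating the convex weights appropriate to each $\tau_i$, one sets $\psi(d):=\sum_i P_i a_{i,d}$; the approximate identities above are then verified by evaluating at each $\tau$ and decomposing the error into contributions from each projection, which are controlled because the $P_i$ are central and the $a_{i,d}$ behave correctly in $\|\cdot\|_{2,\tau_i}$.

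The main technical obstacle I anticipate is twofold. First, selecting the weights $|I_i|/k$ and the finite set of approximating traces so that the pointwise estimates across the $\tau_i$ combine into a genuinely \emph{uniform} tracial estimate requires some care: this is where $w^*$-density of $\{\tau_1,\dots,\tau_n\}$ must be married to the equidistribution property of the $p_j$, by arguing that for any $\tau$, the quantity $\tau(\cdot\, P_i)$ is close to $(|I_i|/k)$ times the value of the most representative $\tau_i$. Second, the equivariance of $\psi$ depends crucially on the $\alpha^\omega$-invariance of each $P_i$ — precisely what equivariant Gamma (as opposed to its non-equivariant counterpart) guarantees — together with the equivariance of the local maps $\varphi^{\tau_i}$. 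Here it is essential that the target in the hypothesis is $\pi_\tau^{\alpha^\omega}(A^\omega)''$ using the direct-sum representation $\bigoplus_{g\in G}\pi_\tau\circ\alpha^\omega_{g^{-1}}$, rather than the plain $\pi_\tau(A^\omega)''$, since the traces $\tau_i$ need not be $G$-invariant and only the orbital construction carries an intrinsic $G$-action that makes equivariance of $\varphi^\tau$ a meaningful assumption and allows the gluing to yield an equivariant map in the end.
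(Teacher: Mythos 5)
The forward direction and the general shape of the reduction (Kirchberg's $\varepsilon$-test / reindexation to an approximate statement, encoding the extension problem by countably many relations, Kaplansky density to move between $\pi_\tau^{\alpha^\omega}(A^\omega)''$ and $A^\omega$) match the paper. But the core gluing step of your proposal has a genuine gap. The projections $p_1,\dots,p_k$ supplied by equivariant uniform property Gamma are \emph{tracially constant}: $\tau(ap_j)=\tfrac1k\tau(a)$ for every relevant trace $\tau$ simultaneously. Hence any grouping $P_i=\sum_{j\in I_i}p_j$ satisfies $\tau(xP_i)=\tfrac{|I_i|}{k}\tau(x)$ for \emph{all} $\tau$, with a weight $|I_i|/k$ that cannot depend on $\tau$. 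When you evaluate the error of $\psi(d)=\sum_i P_ia_{i,d}$ at a trace $\tau$ near $\tau_1$, the cross terms coming from $P_ia_{i,d}$ with $i\neq 1$ contribute on the order of $\tfrac{|I_i|}{k}\|\,\cdot\,\|_{2,\tau}^2$ applied to elements $a_{i,d}$ that are only controlled in $\|\cdot\|_{2,\tau_i}$; these contributions are weighted by fixed positive constants independent of $\tau$ and do not vanish. In other words, equidistributing projections cannot \emph{localize} the $i$-th local solution to the part of the trace space where it is accurate, which is exactly what the gluing requires. (Your phrase ``the convex weights appropriate to each $\tau_i$'' presupposes that each $\tau$ decomposes as a convex combination of the chosen $\tau_i$ with weights you can read off into the $P_i$; but a single choice of cardinalities $|I_i|$ must serve every $\tau$ at once, so no such adaptation is possible.)

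What is actually needed is a family of pairwise orthogonal, approximately $\alpha^\omega$-invariant projections satisfying the \emph{complemented partition of unity} inequality $\tau(b^{\tau_i}p_i)\leq\delta\,\tau(p_i)$, where $b^{\tau_i}=\sum_m|h_m(\dots,y^{\tau_i}_\bullet)|^2$ measures the failure of the $i$-th local solution at $\tau$; this forces $p_i$ to be small precisely at those traces where solution $i$ is bad, which is the opposite of trace-constancy. Producing such projections is the main technical content of the paper (Lemma \ref{lemma:equivariant_CPOU}): it is obtained by combining a weak CPoU statement (Proposition \ref{prop:weak-cpou}), a F{\o}lner averaging step to gain approximate invariance (Lemma \ref{lemma:precursor_1}), a Gamma-based replacement of positive contractions by projections with quantitative control on invariance (Lemma \ref{lemma:eq_tracial_proj}), and an iterative enlargement under the complementary projection (Lemmas \ref{lemma:precursor_2} and \ref{lemma:precursor_final}). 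Equivariant property Gamma enters as one ingredient of that construction, not as a substitute for it, so your plan as written cannot be completed without importing essentially all of this machinery. A further point your sketch leaves untouched is where amenability is used: the F{\o}lner averaging in Lemma \ref{lemma:precursor_1} is the only place the hypothesis on $G$ enters, and nothing in your outline plays that role.
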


Not unlike previous approaches, the proof of this main result factors through a kind of dynamical version of CPoU (the existence of so-called \emph{complemented partitions of unity}) that we establish along the way; see Lemma \ref{lemma:equivariant_CPOU}.
There are two things to note about this, however.
Firstly, the present version of dynamical CPoU does not generally match the property suggested for this purpose in \cite{GardellaHirshbergVaccaro}, and based on our work we are in fact uncertain whether that property can be expected to hold even under the validity of the above theorem.
Secondly, we would like to propose a slight perspective shift by viewing the above local-to-global principle as the primary conceptual property to be studied and exploited instead of the dynamical CPoU, which we feel --- especially compared to previous iterations --- to be rather unwieldy by itself due to its elaborate technical nature.

In the main body of the paper, we actually prove a stronger version of Theorem \ref{theorem-B} for a class of actions on much more general \cstar-algebras.
We would like to comment that the starting point of our theory presumes the underlying \cstar-algebra to satisfy a kind of \emph{weak CPoU}, namely the one shown to hold for nuclear \cstar-algebras in \cite[Lemma 3.6]{CETWW21}.
Fortunately, a result in the recent preprint \cite{CCEGSTW} implies that this kind of weak CPoU in fact holds automatically for all \cstar-algebras with compact tracial state space, which we can use to our advantage.

As for the rest of the paper, we apply Theorem \ref{theorem-B} (or rather Theorems \ref{theorem:local-to-global} and \ref{theorem:reformulation_local_to_global}) to gain insight on equivariant Jiang--Su stability.
A famous argument due to Matui--Sato \cite{MatuiSato12} and the main result of \cite{Szabo21si} allows us to argue (as explained in Section 5) that an action $\alpha$ as above is equivariantly Jiang--Su stable if and only if $A\cong A\otimes\mathcal Z$ and $\alpha$ is uniformly McDuff, i.e., there exist unital $*$-homomorphisms $M_n\to (A^\omega\cap A')^{\alpha^\omega}$ for all $n \in \N$.
Once we note that the latter property is known to hold one trace at a time as a consequence of Ocneanu's theorem \cite{Ocneanu85} (in the generality we need it, this is imported from \cite{SzaboWouters23md}), the above result can be applied to the $\alpha$-equivariant inclusion $1_n\otimes\operatorname{id}_A: A\to M_n(A)$ to deduce the following consequence.
As before, we note that we prove this result in greater generality than stated here; see Theorem \ref{theorem:Gamma-equivalent-to-Z-stable}.

\begin{corollaryi}[cf.\ {\cite[Theorem 7.6]{GardellaHirshbergVaccaro}}]
Let $\alpha: G\curvearrowright A$ be an action of a countable amenable group on a separable unital simple nuclear $\mathcal Z$-stable \cstar-algebra.
Then $\alpha$ has equivariant uniform property Gamma if and only if $\alpha$ is equivariantly Jiang--Su stable.
\end{corollaryi}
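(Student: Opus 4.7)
The direction ``equivariantly $\mathcal Z$-stable $\Rightarrow$ equivariant uniform property Gamma'' is recorded in Remark \ref{remark:Z-stability_implies_Gamma}, so we focus on the converse.

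Our plan combines two ingredients. The first is a Matui--Sato type argument from \cite{MatuiSato12} together with the main result of \cite{Szabo21si}: for $A$ already $\mathcal Z$-stable, equivariant Jiang--Su stability of $\alpha$ is equivalent to $\alpha$ being \emph{uniformly McDuff}, i.e., admitting unital $*$-homomorphisms $M_n \to (A^\omega\cap A')^{\alpha^\omega}$ for every $n\geq 2$. Thus we would reduce the task to producing such embeddings under the equivariant uniform property Gamma assumption. The second ingredient is that the required trace-wise version of the uniformly McDuff property holds by Ocneanu's theorem, in the generality imported from \cite{SzaboWouters23md}. The local-to-global principle of Theorem \ref{theorem-B} is then precisely the tool to promote this trace-wise information to a genuine $*$-homomorphism into $A^\omega$.

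Concretely, fix $n\geq 2$ and consider the $G$-equivariant inclusion $(A,\alpha) \to (M_n(A), \mathrm{id}_{M_n}\otimes\alpha)$, $a\mapsto 1_n\otimes a$, composed with the standard embedding $(A,\alpha)\hookrightarrow (A^\omega,\alpha^\omega)$. Since $M_n(A)$ is generated by the image of this inclusion together with a commuting copy of $M_n$ on which $G$ acts trivially, a unital $*$-homomorphism $M_n\to (A^\omega\cap A')^{\alpha^\omega}$ is the same data as a $G$-equivariant extension of the standard embedding to $(M_n(A),\mathrm{id}_{M_n}\otimes\alpha)\to (A^\omega,\alpha^\omega)$. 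We then apply Theorem \ref{theorem-B} with $B=A$, $D=M_n(A)$, and $\varphi$ the standard equivariant inclusion; by the same reformulation, its trace-wise existence hypothesis reduces to producing, for each $\tau\in\overline{T_\omega(A)}^{w^*}$, a unital equivariant embedding of $M_n$ into the $G$-fixed points of the relative commutant of $\pi_\tau^{\alpha^\omega}(A)$ in $\pi_\tau^{\alpha^\omega}(A^\omega)''$. This is exactly what the $W^*$-dynamical result from \cite{SzaboWouters23md} provides, and feeding it into Theorem \ref{theorem-B} completes the argument.

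The main obstacle, conceptually, is the trace-wise step just described. Unlike the factor case where Ocneanu's theorem applies transparently, here $\pi_\tau^{\alpha^\omega}(A^\omega)''$ need not be a factor, the induced $G$-action can act non-trivially on its center, and one is working with the direct-sum representation $\pi_\tau^{\alpha^\omega} = \bigoplus_{g\in G} \pi_\tau\circ\alpha^\omega_{g^{-1}}$ rather than a single GNS representation. Having at one's disposal a version of McDuff absorption for such general $G$-$W^*$-dynamics in the ultrapower is where \cite{SzaboWouters23md} is essential; this is precisely what allows the present corollary to drop the trace-dynamics assumptions present in \cite[Theorem 7.6]{GardellaHirshbergVaccaro}.
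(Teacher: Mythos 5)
Your proposal is correct and follows essentially the same route as the paper: reduce to uniform McDuffness via equivariant property (SI) (Theorem \ref{theorem:reduction_equivariant_Z-stability} together with \cite{Szabo21si}), then promote the trace-wise embeddings supplied by \cite{SzaboWouters23md} using the local-to-global principle. The only cosmetic difference is that the paper's formal proof (Theorem \ref{theorem:Gamma-equivalent-to-Z-stable-special-case}) encodes the matrix-unit relations as $G$-$*$-polynomials and invokes Theorem \ref{theorem:local-to-global} directly, whereas you invoke the formally equivalent extension formulation (Theorem \ref{theorem:reformulation_local_to_global}) applied to $A\subseteq M_n(A)$ --- which is exactly how the paper's introduction describes the argument.
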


We expect the main result of this article to have impact on subsequent applications of equivariant uniform property Gamma or equivariant Jiang--Su stability, in particular in the context of classifying actions on tracial \cstar-algebras.

As far as potential further research is concerned, let us point out that for group actions $\alpha: G\curvearrowright A$ that are assumed to be `sufficiently free'\footnote{A priori, this may have several different interpretations.}, the theory pursued in this article can be seen as an instance where one studies uniform property Gamma for the inclusion of \cstar-algebras $A\subseteq A\rtimes_{\alpha,r} G$, in such a way as to strengthen uniform property Gamma for $A$.
It is a tantalizing issue to determine a common framework encompassing all applications of interest regarding uniform property Gamma for more general inclusions of \cstar-algebras.
For instance, it has been hypothesized in past work \cite[Remark 9.6]{KerrSzabo20} that for a free minimal action $G\curvearrowright X$ of an amenable group on a compact metric space, some desirable dynamical properties ought to follow from a different kind of uniform property Gamma for the inclusion $\mathcal C(X)\subseteq\mathcal{C}(X)\rtimes G$, namely the one that strengthens uniform property Gamma for the crossed product; see also \cite{LiaoTikuisis22}.

Lastly, we would like to express our sincere gratitude to the anonymous reviewers for their valuable comments that helped improve this article.

\section{Preliminaries}

\begin{notation} \label{nota:basic-notation}
Throughout this paper, we will use the following notations and conventions unless specified otherwise:
\begin{itemize}[itemsep=2pt]
\item By default, $\omega$ denotes some free ultrafilter on $\N$.
At times it can make it easier to state a claim using two free ultrafilters, in which case we denote a second one by $\kappa$.
\item If $F$ is a finite subset inside another set $M$, we often denote this by $F \ssubset M$.
\item $\mathbb{K}$ denotes the compact operators on the  Hilbert space $\ell^2(\N)$. 
\item Let $A$ be a \cstar-algebra. We denote its positive elements by $A_+$ and its minimal unitization by $\tilde{A}$. We will also make use of its Pedersen ideal, denoted by $\mathcal{P}(A)$.
We assume the reader is familiar with the basic properties of this object.
Given a positive element $a \in A$ and $\e>0$, we denote by $(a-\e)_+$ the positive part of the self-adjoint element $a-\e1_{\tilde{A}}$.
\item The topological cone of lower semicontinuous traces on $A_+$ will be denoted by $\tilde{T}(A)$; cf.\ \cite{ElliottRobertSantiago11}.
We call such a trace $\tau$ on $A$ \emph{trivial} if it is $\{0,\infty\}$-valued.
It is well-known that trivial traces are in one-to-one correspondence with the ideal lattice of $A$ by mapping a trivial trace $\tau$ to the linear span of $\tau^{-1}(0)$.
The set of non-trivial lower semicontinuous traces on $A_+$ will be denoted by $T^+(A)$ and the set of tracial states will be denoted by $T(A)$. 
In this paper, we say that a compact subset $K \subset T^+(A)$ is a \emph{compact generator} for $T^+(A)$ if $\R^{>0} K = T^+(A)$.\footnote{In case $A$ is simple, an example of such a compact generator is given by $\{\tau \in T^+(A) \mid \tau(a) = 1\}$ for some $a \in \mathcal{P}(A)_+\setminus\{0\}$.}
\item In addition, we denote by $Q\tilde{T}_2(A)$ the set of lower semicontinuous 2-quasitraces (see \cite[Definition~2.22]{BlanchardKirchberg04}) on $A$, which contains $\tilde{T}(A)$.
We usually only mention them to assume in appropriate contexts that there are no genuine quasitraces, i.e., $Q\tilde{T}_2(A)=\tilde{T}(A)$.
\end{itemize}
\end{notation}

We recall the following existence theorem for traces. This follows from a combination of the work of Blackadar--Cuntz \cite[Theorem 1.5]{BlackadarCuntz82} and Haagerup \cite{Haagerup14} (see also \cite[Remark~2.29(i)]{BlanchardKirchberg04}).
\begin{theorem}
Let $A$ be a simple, exact \cstar-algebra such that $A \otimes \mathbb{K}$ contains no infinite projections.
Then $Q\tilde{T}_2(A)=\tilde{T}(A)$ and $T^+(A) \neq \emptyset$. 
\end{theorem}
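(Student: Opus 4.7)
The plan is to combine two classical results, as the citations suggest. The hypothesis that $A \otimes \mathbb{K}$ contains no infinite projections is precisely stable finiteness of $A$, which is the gateway for the Blackadar--Cuntz existence theorem for dimension functions, while the exactness hypothesis is the gateway for Haagerup's theorem upgrading 2-quasitraces to traces.

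First I would produce a nontrivial element of $Q\tilde{T}_2(A)$. Pick any nonzero $a \in \mathcal{P}(A)_+$; then $B \coloneqq \overline{aAa}$ is a hereditary \cstar-subalgebra that is simple (by simplicity of $A$), exact, and stably finite (since $B \otimes \mathbb{K}$ is a hereditary \cstar-subalgebra of $A \otimes \mathbb{K}$). Its unitisation $\tilde{B}$ is then unital and stably finite, so by \cite[Theorem~1.5]{BlackadarCuntz82} it admits a nonzero dimension function, equivalently a nonzero 2-quasitracial state. Restricting back to $B$ and extending by lower semicontinuity to a densely defined trace on $A_+$ (using simplicity to ensure the extension is genuinely defined on a dense ideal, and Cuntz--Pedersen-type bookkeeping to check lower semicontinuity) produces an element $\sigma \in Q\tilde{T}_2(A)$ that is finite and nonzero on $a$, hence nontrivial.

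Second I would prove the equality $Q\tilde{T}_2(A) = \tilde{T}(A)$. Given any $\sigma \in Q\tilde{T}_2(A)$, for each $b \in \mathcal{P}(A)_+$ the hereditary subalgebra $\overline{bAb}$ is exact and unital upon unitisation, and $\sigma$ restricts to a finite 2-quasitrace on it. Haagerup's theorem \cite{Haagerup14} then forces this restriction to be linear, i.e.\ a trace. Since $b$ was arbitrary in $\mathcal{P}(A)_+$, linearity holds on the dense $*$-subalgebra $\mathcal{P}(A)$; lower semicontinuity then propagates linearity to all of $A_+$, so $\sigma \in \tilde{T}(A)$. Combined with the first step, this also upgrades our nontrivial 2-quasitrace to a genuine element of $T^+(A)$.

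The main technical obstacle is the non-unital bookkeeping: both \cite[Theorem~1.5]{BlackadarCuntz82} and \cite{Haagerup14} are classically stated for unital \cstar-algebras, so one must verify that hereditary corners inherit stable finiteness and exactness, that the quasitraces on them assemble consistently to a lower semicontinuous object on $A$, and that Haagerup's unital linearity statement does propagate to the lower semicontinuous setting on the cone $\tilde{T}(A)$. This is precisely the content abstracted in \cite[Remark~2.29(i)]{BlanchardKirchberg04}, which is what one really invokes to glue the two ingredients together cleanly.
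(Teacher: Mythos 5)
The paper offers no proof of this theorem beyond the very citations you invoke (Blackadar--Cuntz for existence of a nontrivial 2-quasitrace from stable finiteness, Haagerup for linearity from exactness, with \cite[Remark~2.29(i)]{BlanchardKirchberg04} doing the non-unital gluing), so your proposal is essentially the paper's intended argument spelled out. The one step I would flag is the application of \cite[Theorem~1.5]{BlackadarCuntz82} to the unitisation $\tilde{B}$: that algebra is not simple, so the theorem either does not apply as stated or could a priori return the dimension function coming from the character $\tilde{B}\to\C$, which vanishes on $B$; one should instead apply the existence result to the (algebraically) simple object itself --- which is precisely the non-unital bookkeeping you correctly defer to \cite[Remark~2.29(i)]{BlanchardKirchberg04}.
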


In particular, this implies that each stably finite, simple, separable, nuclear $C^*$-algebra admits a non-trivial trace.

\begin{definition}[{\cite[Definition 1.1]{Kirchberg04}, \cite[Definition 4.3]{KirchbergRordam14}}]\label{definition:ultrapowers}
Let $A$ be a \cstar-algebra with an action $\alpha:G \acts A$ of a discrete group.
\begin{enumerate}
\item The \textit{ultrapower} of $A$ is defined as
\[A_\omega := \ell^\infty(A)/\{(a_n)_{n \in \N} \in \ell^\infty(A) : \lim_{n \rightarrow \omega} \|a_n\| = 0\}.\]
\item Pointwise application of $\alpha$ on representing sequences induces an action on the ultrapower, which we will denote by $\alpha_\omega: G \acts A_\omega$.
\item There is a natural inclusion $A \subset A_\omega$ by identifying an element of $A$ with its constant sequence. Define
\[A_\omega \cap A' := \{x \in A_\omega\mid [x,A]=0\} \quad \text{and}\quad A_\omega \cap A^\perp := \{x \in A_\omega \mid xA=Ax=0\}.\]
The quotient
\[F_\omega(A) := (A_\omega \cap A')/(A_\omega \cap A^\perp)\]
is called the \emph{(corrected) central sequence algebra}. If $A$ is $\sigma$-unital, then $F_\omega(A)$ is unital, where the unit is represented by a sequential approximate unit $(e_n)_{n\in \N}$.
\item Since $A$ is $\alpha_\omega$-invariant, so are $A_\omega \cap A'$ and $A_\omega \cap A^\perp$. Thus, $\alpha_\omega$ induces an action on $F_\omega(A)$, which we will denote by $\tilde{\alpha}_\omega: G \acts F_\omega(A)$.
\end{enumerate}
\end{definition}

\begin{definition}
Let $A$ be a \cstar-algebra. 
A sequence of tracial states $(\tau_n)_{n\in \N}$ on $A$ defines a trace on $A_\omega$ via
\[
[(a_n)_{n \in \N}] \mapsto \lim_{n \rightarrow \omega} \tau_n(a_n).\]
A trace of this form is called a \emph{limit trace}. The set of all limit traces on $A_\omega$ will be denoted by $T_\omega(A)$. 
More generally, following {\cite[Definition 2.1]{Szabo21si}}, a sequence $(\tau_n)_{n \in \N}$ in $\tilde{T}(A)$ defines a lower semicontinuous trace $\tau: \ell^\infty(A)_+ \rightarrow [0,\infty]$ by
\[
\tau((a_n)_{n \in \N}) = \sup_{\e >0} \lim_{n \rightarrow \omega} \tau_n((a_n- \e)_+).
\]
This trace is the lower semicontinuous regularization of the trace given by $\lim_{n \rightarrow \omega} \tau_n(a_n)$, see \cite[Lemma 3.1]{ElliottRobertSantiago11}.
This regularization ensures that $\tau((a_n)_{n\in\N}) =0$ if ${\lim_{n \rightarrow \omega} \|a_n\| = 0}$, so $\tau$ also induces a lower semicontinuous trace on $A_\omega$.
A trace of this form on $A_\omega$ is called a \emph{generalized limit trace}.
The set of all generalized limit traces is denoted by $\tilde{T}_\omega(A)$.

For the next part, assume $A$ is separable. For any $a \in A_+$ and $\tau \in \tilde{T}_\omega(A)$ we can define a trace
\[
\tau_a: (A_\omega \cap A')_+ \rightarrow [0,\infty],\quad x \mapsto \tau(ax).
\]
We have that $\tau_a(x) \leq \|x\|\tau(a)$, so this trace is bounded whenever $\tau(a) < \infty$.
Note that this trace also induces a trace on $F_\omega(A)$, which by abuse of notation will also be denoted by $\tau_a$. 
Clearly this yields a tracial state under the assumption $\tau(a)=1$.
Let us say that a given tracial state $\tau$ on $F_\omega(A)$ is a \emph{canonical trace}, if it belongs to the weak-$*$-closed convex hull of $\{ \tau_a \mid \tau\in\tilde{T}_\omega(A),\ a\in A_+,\ \tau(a)=1 \}$.
\end{definition}

\begin{remark}\label{remark:generalized_limit_traces_bounded_case}
We point out that it is not necessary to consider generalized limit traces in an important subcase that often occurs in the literature.
Namely, assume $A$ is a separable simple \cstar-algebra with $\emptyset\neq T^+(A)=\R^{>0} T(A)$ such that $T(A)$ is compact.\footnote{For instance this is automatic when $A$ is separable simple nuclear unital and stably finite.}
Then it follows by \cite[Proposition 2.3]{CastillejosEvington21} that every generalized limit trace $\tau \in \tilde{T}_\omega(A)$ that is finite on some non-zero positive element of $A$ is a multiple of an ordinary limit trace.  
\end{remark}

Next we recall how various versions of tracial ultrapowers are defined.

\begin{definition}[see {\cite[Propositions 3.1, 3.2]{AndoHaagerup14}}]
Suppose $\mathcal{M}$ is a finite von Neumann algebra with faithful normal tracial state $\tau$.
Then the \emph{tracial von Neumann algebra ultrapower} is defined as
\begin{equation} \label{definition:vonNeumann_ultrapower}
\mathcal{M}^\omega := \ell^\infty(\mathcal{M})/\{(x_n)_{n \in \N} \in \ell^\infty(\mathcal{M}) \mid \lim_{n \rightarrow \infty} \|x_n\|_{2,\tau} = 0\}.
\end{equation}
This is again a von Neumann algebra with a faithful normal tracial state $\tau^\omega$ that is defined on representative sequences by $\tau^\omega((x_n)_{n\in\N}) = \lim_{n \rightarrow \omega} \tau(x_n)$.
\end{definition}

The notation used for the tracial von Neumann algebra ultrapower is the same as for the uniform tracial ultrapower of a suitable $C^*$-algebra as defined below.
It will be clear from context which of the two notions we use.
In the special case that $A$ is a $C^*$-algebra with unique tracial state $\tau$ and no unbounded traces, the uniform tracial ultrapower $A^\omega$ is naturally isomorphic, by Kaplansky's density theorem, to the von Neumann tracial ultrapower $(\pi_\tau(A)'')^\omega$, where $\pi_\tau$ denotes the GNS representation associated to $\tau$.
Note that on a tracial von Neumann algebra $(\mathcal{M},\tau)$, the topology induced by the $\|\cdot\|_{2,\tau}$-norm agrees with the $*$-strong operator topology on bounded subsets.
So equivalently, in \eqref{definition:vonNeumann_ultrapower} one can quotient out by the sequences that converge to 0 in the $*$-strong operator topology, which makes the construction equivalent to the Ocneanu ultrapower; cf.\ \cite{AndoHaagerup14}.

\begin{definition} \label{definition:tracial-norms}
Let $A$ be a \cstar-algebra.
Given a constant $p\geq 1$ and $\tau \in T(A)$, we define a seminorms $\|\cdot\|_{p,\tau}$ on $A$ by
\[
\|a\|_{p,\tau} = \tau(|a|^p)^{1/p},\quad a\in A.
\] 
We will in particular appeal to the cases $p=1$ or $p=2$ subsequently.
For a non-empty set $X \subset T(A)$, we define a seminorm $\|\cdot\|_{2,X}$ on $A$ by
\[\|a\|_{2,X} := \sup_{\tau \in X}\|a\|_{2,\tau}\]
for all $a \in A$.
The seminorm $\|\cdot\|_{2,T(A)}$ is also denoted by $\|\cdot\|_{2,u}$. This is a norm if and only if for all non-zero $a \in A$ there exists some $\tau \in T(A)$ such that $\tau(a^*a) >0$, which is in particular the case when $A$ is simple with $T(A)$ non-empty. 
\end{definition}

We note that in the construction below, we deviate from other sources by making a very explicit distinction in terminology between \cstar-algebras that do or do not admit non-trivial unbounded traces.

\begin{definition}[cf.\ {\cite[Paragraph 1.3]{CETWW21}}\footnote{The cited source assumes separability, but we generalize the definition beyond that case.}] \label{definition:tracial_ultrapowers}
Let $A$ be a \cstar-algebra with $T(A) \neq \emptyset$.
Then the \emph{trace-kernel ideal (with respect to bounded traces)} inside $A_\omega$ is defined by
\[
J_A^{\rm{b}} := \{ [(a_n)_{n \in \N}] \in A_\omega \mid \lim_{n \rightarrow \omega} \|a_n\|_{2,T(A)}= 0\}.
\]
The \emph{uniform bounded tracial ultrapower} is defined as the quotient
\[
A^{\omega,\rm b} := A_\omega/ J_A^{\rm{b}}.
\]
Whenever $\|\cdot\|_{2,T(A)}$ defines a norm on $A$, there also exists a canonical embedding of $A$ into $A^{\omega,\rm b}$.
Then $A^{\omega,\rm b} \cap A'$ is called the \emph{uniform bounded tracial central sequence algebra}. Whenever we have an action $\alpha: G \acts A$ of a discrete group, the ideal $J_A^{\rm{b}}$ is $\alpha_\omega$-invariant.
Hence, there is an induced action on the uniform bounded tracial ultrapower, which we will denote by $\alpha^\omega: G \acts A^{\omega,\rm b}$. 

Clearly, every limit trace vanishes on $J_A^{\rm b}$ and hence also induces a tracial state on $A^{\omega,\rm b}$.
We will also use $T_\omega(A)$ to denote the collection of limit traces on $A^{\omega,\rm b}$.
Note that 
\[
J_A^{\rm{b}} =\{ x \in A_\omega : \|x\|_{2,T_\omega(A)} = 0\},
\]
so in particular $\|\cdot\|_{2,T_\omega(A)}$ defines a norm on $A^{\omega,\rm b}$.

Finally, if we assume $A$ is a simple \cstar-algebra such that $Q\tilde{T}_2(A)=\tilde{T}(A)$ and $\emptyset \neq T^+(A)=\mathbb{R}^{>0}\cdot T(A)$ with $T(A)$ compact, then we simply call $J_A=J_A^{\rm{b}}$ the trace-kernel ideal, $A^\omega=A^{\omega,\rm b}$ the uniform tracial ultrapower, and $A^\omega\cap A'$ the uniform tracial central sequence algebra.
\end{definition}

\begin{remark} \label{rem:weird-examples-for-Gamma}
Our choice to add the extra ``bounded'' in the terminology above and the extra letter ``$\rm b$'' in the notation, which is usually not included in other sources such as the ones we cite, is deliberate and has the purpose to not overuse the word ``uniform'', in particular in cases where it becomes rather misleading.
This is most apparent for non-simple \cstar-algebras; if $B$ is any unital simple \cstar-algebra with $T(B)\neq\emptyset$, then the above construction applied to $A=B\oplus\mathbb{K}$ yields $A^{\omega,\rm b}=B^\omega$ by virtue of the fact that the canonical trace on $\mathbb{K}$ is unbounded.
Since one of the two tracial direct summands is entirely forgotten in this construction, this object seems unfit to be called ``uniform tracial''.
However, even the case of simple \cstar-algebras is enough to illustrate why one should not equate $A^{\omega,\rm b}$ with the object capturing all ``uniform tracial'' data.
Namely, the range result \cite{GongLin22} combined with a little playing around with invariants allows one to see that given any metrizable Choquet simplex $S$ with $\partial_e S$ admitting some isolated point, there exists a (non-unital) classifiable \cstar-algebra $A$ such that $T^+(A)$ has a Choquet base affinely homeomorphic to $S$, yet $A$ has a unique tracial state $\tau$.
In this scenario, we have $A^{\omega,\rm b}\cong (\pi_\tau(A)'')^\omega\cong \mathcal{R}^\omega$ as a consequence of Connes' theorem.
So despite $A$ having a rich tracial structure, the only trace captured by this construction is $\tau$, which compels us to not apply the word ``uniform'' or the notation ``$A^\omega$'' to such an example.

Note that the phenomenon discussed here is also what motivated us to subsequently revise the definition of (equivariant) uniform property Gamma in the spirit of \cite{CastillejosEvington21}, as well as introduce an auxiliary version of it that explicitly only takes into account tracial states, even when the surrounding \cstar-algebra may have other unbounded traces.
\end{remark}

\begin{remark}\label{remark:tracial_ultrapower_unital_T(A)_compact}
Let $A$ be a $\sigma$-unital \cstar-algebra with $T(A)\neq\emptyset$.
By \cite[Proposition 1.11]{CETWW21} the uniform bounded tracial ultrapower $A^{\omega,\rm b}$ is unital if and only if $T(A)$ is compact.\footnote{The cited statement assumes separability of $A$, but a closer look at the proof shows that $\sigma$-unitality is sufficient.}
Moreover, \cite[Lemma 1.10]{CETWW21} shows that in that case the natural map ${A_\omega \cap A' \rightarrow A^{\omega,\rm b} \cap A'}$ factors through $F_\omega(A)$. In case $A$ is separable, this natural map is surjective by a combination of \cite[Propositions 4.5(iii) and 4.6]{KirchbergRordam14} (the unitality hypothesis in the second cited proposition is not needed, as it suffices to take a unit in the minimal unitisation for the proof).
\end{remark}

As we have argued above, there are some issues if one is trying to define the object $A^\omega$ for a \cstar-algebra $A$ that may possess many unbounded traces.
In fact, trying to find a viable general definition that has the same level of utility as in the case of unital \cstar-algebras has eluded a number of researchers for years.
By introducing the next few definitions and observations, however, we wish to promote the viewpoint that there is a rather natural way to define the object $A^\omega\cap A'$ for any separable \cstar-algebra $A$, even if we do not know at present how to properly define the object $A^\omega$ itself.
Since we are unsure of the viability of this definition when $A$ admits genuine quasitraces, we wish to be cautious and shall only define the concepts below under the assumption that $A$ does not admit them.\footnote{At the same time, we note that the concepts make sense formally anyway, and none of the subsequent arguments hinge on the assumption that $A$ does not admit genuine quasitraces.}

\begin{definition} \label{def:AomegacapAprime}
Let $A$ be a separable \cstar-algebra with $Q\tilde{T}_2(A)=\tilde{T}(A)$ and $T^+(A)\neq\emptyset$.
The \emph{trace-kernel ideal} $\mathcal{J}_A$ inside $F_\omega(A)$ is defined as the set of elements $x\in F_\omega(A)$ such that for every generalized limit trace $\tau\in\tilde{T}_\omega(A)$ and $a\in A_+$ with $0<\tau(a)<\infty$, we have $\tau_a(x^*x)=0$.
With some abuse of notation, we denote the quotient by \begin{equation}\label{eq:AomegacapAprime} A^\omega\cap A' = F_\omega(A)/\mathcal{J}_A.\end{equation}
It is clear from construction that a canonical trace on $F_\omega(A)$ vanishes on $\mathcal{J}_A$, so it descends to a tracial state on $A^\omega\cap A'$.
As before, we call a given tracial state on $A^\omega\cap A'$ a \emph{canonical trace}, if it is induced by a canonical trace on $F_\omega(A)$, or equivalently, if it belongs to the weak-$*$-closed convex hull of the tracial states $\tau_a$ on $A^\omega\cap A'$, where $\tau\in \tilde{T}_\omega(A)$ and $a\in A_+$ with $\tau(a)=1$.
 
If $\alpha: G\acts A$ is an action of a discrete group with induced action $\tilde{\alpha}_\omega: G\acts F_\omega(A)$, then clearly $\mathcal{J}_A$ is $\tilde{\alpha}_\omega$-invariant, so we obtain an induced action $\alpha^\omega: G\acts A^\omega\cap A'$.
\end{definition}

\begin{remark}\label{remark:consistent_notation_ultrapower}
In case $A$ is simple and that $Q\tilde{T}_2(A)=\tilde{T}(A)$ and $\emptyset \neq T^+(A)=\mathbb{R}^{>0} T(A)$ with $T(A)$ compact, it follows from Remarks \ref{remark:generalized_limit_traces_bounded_case} and \ref{remark:tracial_ultrapower_unital_T(A)_compact} that $F_\omega(A)/\mathcal{J}_A = A^{\omega, \mathrm{b}}\cap A'$, so the notation $A^\omega \cap A'$ is consistent with the last part of Definition \ref{definition:tracial_ultrapowers}.
\end{remark}

\begin{remark}[see remark after {\cite[Definition 4.3]{KirchbergRordam14}}]
Let $p\geq 1$ be any constant.
Given any element $x$ in a \cstar-algebra $B$ with a tracial state $\theta$, one has the inequalities 
\[
\|x\|_{1,\theta}\leq\|x\|_{p,\theta}\leq\|x\|_{1,\theta}^{1/p}\|x\|^{1-1/p}.
\]
This implies that an element in either Definition \ref{definition:tracial_ultrapowers} or \ref{def:AomegacapAprime} belongs to the trace-kernel ideal if and only if its tracial $p$-norms vanish with respect to the appropriately chosen (limit) traces.
We will frequently use this without further mention for $p=1$.
\end{remark}

\begin{remark}
One of Kirchberg's initial observations about $F_\omega(A)$, which attests to the naturality of its construction, is that it is a stable invariant.
We are about to argue that the same applies to the construction $A\mapsto A^\omega\cap A'$.
For this purpose, let $\{ e_{k,\ell}\mid k,\ell\geq 1\}$ be a set of matrix units generating $\mathbb K$, and let $1_n\in\mathbb K$ be the increasing approximate unit given by $1_n=\sum_{j=1}^n e_{j,j}$.
We recall (cf.\ \cite[Proposition 1.9, Corollary 1.10]{Kirchberg04}) that there is a canonical isomorphism $\theta: F_\omega(A)\to F_\omega(A\otimes\mathbb K)$ defined as follows: given an element $x\in F_\omega(A)$ represented by a central sequence $(x_n)_{n \in \N}$ in $A$, it is sent to the element $\theta(x)$ represented by the central sequence $(x_n\otimes 1_n)_{n \in \N}$.
\end{remark}

\begin{prop} \label{prop:tracial-central-sequence-algebra-stable}
Let $A$ be a separable \cstar-algebra with $Q\tilde{T}_2(A)=\tilde{T}(A)$ and $T^+(A)\neq\emptyset$.
Then the canonical isomorphism $F_\omega(A)\cong F_\omega(A\otimes\mathbb K)$ preserves the canonical traces on both sides.
Consequently, it descends to a canonical isomorphism
\[
A^\omega\cap A' \cong (A\otimes\mathbb K)^\omega\cap (A\otimes\mathbb K)'.
\]
\end{prop}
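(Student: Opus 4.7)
The plan is to verify that the isomorphism $\theta$ matches up the trace-kernel ideals on both sides. This will simultaneously give the preservation of canonical traces (as a family of seminorms whose joint kernel defines each ideal) and the claimed descent to the quotients. The essential input is a direct computation at the level of representing sequences, combined with the standard bijection of cones $\tilde T(A)\cong\tilde T(A\otimes\mathbb{K})$ via $\tau\mapsto\tau\otimes\mathrm{Tr}$, which lifts to a bijection of generalized limit traces.

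First I would carry out the key computation for generators. Given a central sequence $(x_n)_n$ representing $x\in F_\omega(A)$ (so $\theta(x)$ is represented by $(x_n\otimes 1_n)_n$), a generalized limit trace $\tau\in\tilde T_\omega(A)$ represented by $(\tau_n)$, and $a\in A_+$ with $\tau(a)=1$, the identity
\[
(x_n\otimes 1_n)^*(a\otimes e_{1,1})(x_n\otimes 1_n) = (x_n^*ax_n)\otimes e_{1,1}
\]
combined with the spectral identity $(y\otimes e_{1,1}-\e)_+ = (y-\e)_+\otimes e_{1,1}$ for $y\in A_+$ and $\mathrm{Tr}(e_{1,1}) = 1$ yields
\[
(\tau\otimes\mathrm{Tr})_{a\otimes e_{1,1}}\circ\theta = \tau_a
\]
as traces on $F_\omega(A)$. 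This already delivers $\theta^{-1}(\mathcal{J}_{A\otimes\mathbb{K}})\subseteq\mathcal{J}_A$ and shows that every generator of the canonical traces on $F_\omega(A)$ is the pullback of one on $F_\omega(A\otimes\mathbb{K})$.

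For the reverse direction, I would first treat $b\in(A\otimes M_N)_+$: writing $b=\sum_{i,j\leq N}b_{ij}\otimes e_{ij}$, an analogous computation (using $\mathrm{Tr}(e_{ij})=\delta_{ij}$ and $1_n e_{ij}1_n=e_{ij}$ for $n\geq N$) gives
\[
(\tau\otimes\mathrm{Tr})_b\circ\theta = \tau_{E(b)},
\]
where $E(b):=\sum_i b_{ii}\in A_+$ satisfies $\tau(E(b))=(\tau\otimes\mathrm{Tr})(b)$. So for $b$ in the dense $\ast$-subalgebra $\bigcup_N A\otimes M_N$, pulling back already produces a generator of canonical traces on $F_\omega(A)$. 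To handle a general $b\in(A\otimes\mathbb{K})_+$ with $(\tau\otimes\mathrm{Tr})(b)<\infty$, I would use the monotone approximation $\tilde b_N := b^{1/2}(1\otimes 1_N)b^{1/2}\nearrow b$, approximate each $\tilde b_N$ in norm by elements of $A\otimes M_M$ for large $M$, and invoke monotone convergence for the lower semicontinuous trace $\tau\otimes\mathrm{Tr}$ together with the estimate $|(\tau\otimes\mathrm{Tr})_{b_1}(y)-(\tau\otimes\mathrm{Tr})_{b_2}(y)|\leq\|y\|(\tau\otimes\mathrm{Tr})(|b_1-b_2|)$ to place $(\tau\otimes\mathrm{Tr})_b\circ\theta$ in the weak-$*$ closed convex hull of the generators $\tau_a$.

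Combining both directions, $\theta$ induces a bijection between canonical traces on either side, and consequently $\theta(\mathcal{J}_A)=\mathcal{J}_{A\otimes\mathbb{K}}$, so $\theta$ descends to the desired isomorphism $A^\omega\cap A'\cong(A\otimes\mathbb{K})^\omega\cap(A\otimes\mathbb{K})'$. The main obstacle is the approximation argument for general $b$: the naive cut-down $(1\otimes 1_N)b(1\otimes 1_N)$ is not monotone in $N$, forcing the detour through $b^{1/2}(1\otimes 1_N)b^{1/2}$ and a subsequent norm approximation inside $A\otimes M_M$; throughout, one must also keep the lower-semicontinuous regularization (the $\sup_\e$) baked into the definition of generalized limit traces compatible with both the monotone and norm approximations.
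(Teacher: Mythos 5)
Your proof follows the same skeleton as the paper's: everything reduces to an identity expressing $(\tau\otimes\mathrm{Tr})_a\circ\theta$ in terms of the canonical traces $\tau_{a_{\ell,\ell}}$ on $F_\omega(A)$, applied once with $a=b\otimes e_{1,1}$ to get one inclusion of canonical traces and once with general $a\in(A\otimes\mathbb K)_+$ to get the other, whence $\theta(\mathcal{J}_A)=\mathcal{J}_{A\otimes\mathbb K}$ and the descent. The only real difference is that the paper imports this identity wholesale as the norm-convergent sum formula $\tau^s_a\circ\theta=\sum_{\ell=1}^\infty\tau_{a_{\ell,\ell}}$ from \cite[Proposition 2.9]{CastillejosEvington21}, whereas you reprove the cases you need by hand; your computations for $a\otimes e_{1,1}$ and for $b\in(A\otimes M_N)_+$ are correct and are exactly what that proposition specializes to. The one soft spot is the passage to general $b\in(A\otimes\mathbb K)_+$: when $\tau\otimes\mathrm{Tr}$ is unbounded, a norm-small perturbation of $b^{1/2}(1\otimes 1_N)b^{1/2}$ inside $A\otimes M_M$ need not be small --- or even finite --- under $(\tau\otimes\mathrm{Tr})(|b_1-b_2|)$, so the displayed estimate cannot be applied to an arbitrary norm approximant. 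This is repairable without the norm-approximation detour: $(1\otimes 1_N)b(1\otimes 1_N)$ lies exactly in $A\otimes M_N$, and since $c^*c$ and $cc^*$ (with $c=(1\otimes 1_N)b^{1/2}$) induce the same functional on central elements under any trace, one has $(\tau\otimes\mathrm{Tr})_{(1\otimes 1_N)b(1\otimes 1_N)}=(\tau\otimes\mathrm{Tr})_{b^{1/2}(1\otimes 1_N)b^{1/2}}$ on $F_\omega(A\otimes\mathbb K)$; combined with your monotone-convergence observation and lower semicontinuity this exhibits the normalization of $(\tau\otimes\mathrm{Tr})_b\circ\theta$ as a limit of normalized generators $\tau_{E((1\otimes 1_N)b(1\otimes 1_N))}$, which is all that is needed. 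With that adjustment your argument is complete and is, in substance, a self-contained proof of the special case of the cited formula that the paper's shorter proof relies on.
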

\begin{proof}
As we set up before the Proposition, we denote the canonical isomorphism by $\theta$.
It is clear that it induces an affine homeomorphism between all tracial states on $F_\omega(A)$ and on $F_\omega(A\otimes\mathbb K)$ via $\tau\mapsto\tau\circ\theta^{-1}$.
The claim amounts to showing that the image of the canonical traces on the left is equal to the canonical traces on the right.

Let $\rm Tr$ be the unique lower semicontinuous trace on $\mathbb K$ with $\mathrm{Tr}(e_{1,1})=1$.
We keep in mind that the assignment $\tilde{T}(A)\to\tilde{T}(A\otimes\mathbb K)$ given by $\tau\mapsto \tau\otimes\rm Tr$ is an affine homeomorphism.
Given a generalized limit trace $\tau\in\tilde{T}_\omega(A)$ induced by a sequence $(\tau_n)_{n \in \N}$ in $\tilde{T}(A)$, let us denote by $\tau^s\in\tilde{T}_\omega(A\otimes\mathbb K)$ the generalized limit trace induced by the sequence $(\tau_n\otimes\mathrm{Tr})_{n \in \N}$ in $\tilde{T}(A\otimes\mathbb K)$.
Clearly the assignment $\tau\mapsto\tau^s$ is also a bijection between generalized limit traces. Let such a generalized limit trace $\tau$ be given on $A_\omega$.
Given $a \in (A\otimes\mathbb K)_+$, we can write $a=\sum_{k,\ell=1}^\infty a_{k,\ell}\otimes e_{k,\ell}$ for uniquely determined elements $a_{k,\ell}\in A$.
It then follows from \cite[Proposition 2.9]{CastillejosEvington21} that we have a norm-convergent sum expression
\begin{equation}\label{eq:canonical_isomorphism_traces}
\tau^s_a\circ\theta = \sum_{\ell=1}^\infty \tau_{a_{\ell,\ell}}.
\end{equation}
Applied to $a = b \otimes e_{1,1}$ for some $b \in A_+$ with $\tau(b) =1$, this gives $\tau_{b} \circ \theta^{-1} = \tau_a^s$.
From this we can infer that canonical traces are mapped to canonical traces. The general expression \eqref{eq:canonical_isomorphism_traces} applied to $a \in (A\otimes\mathbb K)_+$ with $\tau^s(a)=1$ shows that we have a bijection.
\end{proof}

We give two more technical lemmas that will be useful later on.

\begin{lemma}\label{lemma:approximation_traces}
Let $A$ be a \cstar-algebra with positive element $a \in A_+$.
Let $(\varepsilon_n)_{n \in \N}$ be a sequence of positive constants such that $\lim_{n \rightarrow \infty} \varepsilon_n = 0$ and $(b_n)_{n\in\N}$ a sequence of positive elements such that $\|b_n - a\| < \varepsilon_n$.
For each $\tau \in \tilde{T}_\omega(A)$ and $c \in F_\omega(A)$ one has
\[
\lim_{n \rightarrow\infty} \tau_{(b_n-\varepsilon_n)_+}(c) = \tau_{a}(c). 
\] 
\end{lemma}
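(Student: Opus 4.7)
The plan is to recognise that, for each fixed $c \in F_\omega(A)$, the claim reduces to a statement about how an arbitrary lower semicontinuous trace on $A$ behaves under the norm approximation $a \approx (b_n - \varepsilon_n)_+$, which in turn is handled by two applications of R\o rdam's Cuntz subequivalence lemma.

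First I reduce to the case $c \in F_\omega(A)_+$ by decomposing $c = c_1 - c_2 + i(c_3 - c_4)$ with $c_j \in F_\omega(A)_+$ and invoking linearity of the maps $\tau_a(\cdot)$ and $\tau_{(b_n - \varepsilon_n)_+}(\cdot)$. For positive $c$, fix a positive lift $\tilde c \in (A_\omega \cap A')_+$ and define
\[
\phi_c : A_+ \to [0,\infty], \qquad \phi_c(x) = \tau(\tilde c^{1/2} x \tilde c^{1/2}).
\]
Using that $\tilde c$ commutes with all elements of $A$ inside $A_\omega$, a routine calculation with the trace property of $\tau$ yields $\phi_c(x^*x) = \phi_c(xx^*)$, so $\phi_c$ is a trace on $A_+$; its lower semicontinuity is inherited from $\tau$ via the identity $\phi_c(x) = \sup_{\delta > 0} \tau((\tilde c^{1/2} x \tilde c^{1/2} - \delta)_+)$. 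By construction $\phi_c(a) = \tau_a(c)$ and $\phi_c((b_n - \varepsilon_n)_+) = \tau_{(b_n - \varepsilon_n)_+}(c)$. It therefore suffices to prove the general claim: \emph{for every lower semicontinuous trace $\phi$ on $A$ one has $\lim_n \phi((b_n - \varepsilon_n)_+) = \phi(a)$}.

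This general claim is established by two applications of R\o rdam's lemma. For the upper bound, $\|b_n - a\| < \varepsilon_n$ yields $d_n \in A$ with $\|d_n\| \leq 1$ and $(b_n - \varepsilon_n)_+ = d_n a d_n^*$; cyclicity of $\phi$ together with the operator inequality $a^{1/2} d_n^* d_n a^{1/2} \leq \|d_n\|^2 a$ gives $\phi((b_n - \varepsilon_n)_+) \leq \phi(a)$ for every $n$. For the lower bound, the elementary estimate $\|a - (b_n - \varepsilon_n)_+\| \leq 2\varepsilon_n$ combined with R\o rdam's lemma produces $e_n \in A$ with $\|e_n\| \leq 1$ such that $(a - 3\varepsilon_n)_+ = e_n (b_n - \varepsilon_n)_+ e_n^*$, hence $\phi((a - 3\varepsilon_n)_+) \leq \phi((b_n - \varepsilon_n)_+)$. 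For any fixed $\delta > 0$ and $n$ large enough that $3\varepsilon_n \leq \delta$, functional calculus yields $(a - 3\varepsilon_n)_+ \geq (a - \delta)_+$, so $\phi((a - \delta)_+) \leq \liminf_n \phi((b_n - \varepsilon_n)_+)$. Taking the supremum over $\delta > 0$ and invoking $\phi(a) = \sup_{\delta > 0} \phi((a - \delta)_+)$ (itself a consequence of lower semicontinuity) delivers the matching lower bound $\phi(a) \leq \liminf_n \phi((b_n - \varepsilon_n)_+)$; combining the two inequalities finishes the argument.

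The only delicate point I anticipate is verifying that $\phi_c$ really is a lower semicontinuous trace on $A$, rather than just a positive additive functional, since $\tilde c^{1/2}$ lives in $A_\omega$ and $\phi_c$ may take infinite values; this hinges crucially on $\tilde c \in A_\omega \cap A'$ together with the trace property of $\tau$ on $A_\omega$. Once this is granted, the remainder of the proof is a clean application of the $(a-\varepsilon)_+$-trick.
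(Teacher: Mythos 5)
Your proof is correct and follows essentially the same route as the paper: the key step in both is the Kirchberg--R{\o}rdam lemma producing contractions $d_n$ with $(b_n-\varepsilon_n)_+=d_nad_n^*$, which gives the upper bound $\tau_{(b_n-\varepsilon_n)_+}(c)\leq\tau_a(c)$. The only cosmetic difference is the lower bound: the paper applies lower semicontinuity of $\tau$ directly to the norm-convergent sequence $(b_n-\varepsilon_n)_+\to a$, whereas you reach the same estimate via a second application of R{\o}rdam's lemma together with $\phi(a)=\sup_{\delta>0}\phi((a-\delta)_+)$, which is lower semicontinuity in disguise.
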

\begin{proof}
For each $n \in \N$ there exists a contraction $d_n \in A$ such that $(b_n - \varepsilon_n)_+ = d_nad_n^*$ by \cite[Lemma 2.2]{KirchbergRordam02}.
This implies that  $\tau_{(b_n-\varepsilon_n)_+}(c) \leq \tau_{a}(c)$.
Since the sequence $(b_n - \varepsilon_n)_+$ converges to $a$ and $\tau$ is lower semicontinuous, this leads to the desired result. 
\end{proof}

\begin{lemma}\label{lemma:compact_generator_no_cutdowns}
Let $A$ be a simple \cstar-algebra with $a \in \mathcal{P}(A)_+\setminus\{0\}$ and let $K$ be a compact generator for $T^+(A)\neq\emptyset$.
Take a generalized limit trace $\tau \in \tilde{T}_\omega(A)$ such that ${0 < \tau(a) < \infty}$.
Then there exists a sequence $(\theta_n)_{n \in \N}$ in $K$ such that the associated generalized limit trace $\theta$ on $A_\omega$ is a scalar multiple of $\tau$ and such that for each sequence $(b_n)_{n\in \N}$ representing an element of $A_\omega$ we have that
\begin{equation}\label{eq:no_cutdowns}\theta((ab_n)_{n \in \N}) =  \lim_{n \rightarrow \omega} \theta_n(ab_n).\end{equation}
\end{lemma}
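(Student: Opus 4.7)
The plan is to choose the sequence $(\theta_n)$ in $K$ by renormalizing a representing sequence of $\tau$, then to verify the two required properties separately: first that the induced trace is a scalar multiple of $\tau$, and second that the cut-downs in \eqref{eq:no_cutdowns} can be dispensed with. Fix a representing sequence $(\tau_n)_{n \in \N}$ of $\tau$. Since $\tau(a) = \sup_{\e > 0} \lim_{n \to \omega} \tau_n((a - \e)_+) > 0$, I may choose $\e_0 > 0$ small enough that $\beta_0 := \lim_{n \to \omega} \tau_n((a - \e_0)_+) > 0$; note $\beta_0 \leq \tau(a) < \infty$. The set $F := \{ n \in \N : \beta_0/2 \leq \tau_n((a - \e_0)_+) \leq \beta_0 + 1 \}$ then lies in $\omega$, and for $n \in F$ the trace $\tau_n$ is non-trivial (so an element of $T^+(A)$, and in fact faithful by simplicity of $A$). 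Since $K$ is a compact generator for $T^+(A)$, for each $n \in F$ I write $\tau_n = s_n \theta_n$ with $\theta_n \in K$ and $s_n > 0$; for $n \notin F$ I set $\theta_n$ to be any fixed element of $K$.

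Next, I argue that $s_n$ has a positive finite $\omega$-limit. In the natural cone topology on $\tilde{T}(A)$ from \cite{ElliottRobertSantiago11}, evaluation at the cut-down $(a - \e_0)_+$ is continuous, so $\rho \mapsto \rho((a - \e_0)_+)$ attains a maximum $\beta' < \infty$ and a minimum $\alpha' > 0$ on the compact set $K$ (positivity uses faithfulness of non-trivial traces on simple $A$ and $(a-\e_0)_+\neq 0$). The identity $s_n = \tau_n((a - \e_0)_+) / \theta_n((a - \e_0)_+)$ then confines $s_n$ (for $n \in F$) to $[\beta_0/(2\beta'),\, (\beta_0 + 1)/\alpha']$, so $s := \lim_{n \to \omega} s_n$ is a positive real number. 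Pulling the scalar $s_n^{-1}$ out of the $\omega$-limit in the defining formula of the generalized limit trace yields $\theta = s^{-1} \tau$, which is the required scalar multiple.

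For \eqref{eq:no_cutdowns}, by trace cyclicity and the standard four-term decomposition $b_n = \sum_{k=0}^{3} i^k b_n^{(k)}$ with $b_n^{(k)} \in A_+$, it suffices to treat a positive sequence $b_n \geq 0$ with $M := \sup_n \|b_n\| < \infty$. Setting $x_n := a^{1/2} b_n a^{1/2}$, one has $x_n \in A_+$, $x_n \leq M a$, and $\theta_n(ab_n) = \theta_n(x_n)$. The key estimate is the scalar AM-GM inequality $\min(t,\e) \leq \sqrt{\e t}$ for $t \geq 0$, which by functional calculus combined with the operator monotonicity of $t \mapsto t^{1/2}$ on $[0,\infty)$ gives
\[
x_n - (x_n - \e)_+ = \min(x_n, \e) \leq \sqrt{\e}\, x_n^{1/2} \leq \sqrt{\e M}\, a^{1/2}.
\]
Since $a^{1/2}$ lies in $\mathcal{P}(A)_+$ (by hereditarity of the Pedersen ideal) and compact subsets of $\tilde{T}(A)$ are pointwise bounded on $\mathcal{P}(A)_+$, the quantity $C := \sup_{\rho \in K} \rho(a^{1/2})$ is finite. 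This yields the uniform bound $\theta_n(x_n) - \theta_n((x_n - \e)_+) \leq \sqrt{\e M}\, C$; taking the $\omega$-limit and letting $\e \to 0$ collapses the supremum over $\e$ in the definition of $\theta((ab_n))$ to the honest limit $\lim_\omega \theta_n(ab_n)$.

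The main delicate point in the argument is that $\tau_n(a)$ itself may be unbounded along $\omega$ despite $\tau(a)$ being finite; that is precisely why one cannot normalize with $\tau_n(a)$ directly and why the route via $(a - \e_0)_+$ (for the scaling to $K$) and via $a^{1/2}$ (for the regularization estimate) is used, both elements being safely in $\mathcal{P}(A)_+$ where compactness of $K$ produces uniform control.
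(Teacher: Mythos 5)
Your proof is correct in outline and reaches the same conclusion by a genuinely different route, so let me compare. For the first assertion (a sequence in $K$ inducing a multiple of $\tau$) the paper simply cites \cite[Lemma 2.10 and Remark 2.11]{Szabo21si}, whereas you reprove it by renormalising along the set $F$; this is essentially the content of the cited lemma and your argument is fine, except that your claim that $\rho\mapsto\rho((a-\e_0)_+)$ is \emph{continuous} on $\tilde{T}(A)$ is not literally true. In the Elliott--Robert--Santiago topology this map is only lower semicontinuous, and the upper bound you need comes from the defining convergence criterion $\limsup_i\rho_i\big(((a-\e_0/2)_+-\e_0/2)_+\big)\le\rho((a-\e_0/2)_+)$ together with finiteness of traces in $T^+(A)$ on the Pedersen ideal; the conclusion $0<\alpha'\le\beta'<\infty$ is correct but deserves this justification. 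For the second assertion your route diverges more substantially: the paper works with the hereditary subalgebra $B=\overline{a^{1/2}Aa^{1/2}}\subseteq\mathcal{P}(A)$, proves the \emph{uniform} bound $\sup_{\sigma\in K}\|\sigma|_B\|<\infty$ by the gliding-hump argument with $d=\sum_n 2^{-n}d_n$, and concludes that the limit trace is already bounded (hence continuous) on $\overline{a^{1/2}\ell^\infty(A)a^{1/2}}$, so the lower semicontinuous regularisation is vacuous there. You instead use the pointwise estimate $x_n-(x_n-\e)_+=\min(x_n,\e)\le\sqrt{\e M}\,a^{1/2}$, which is more quantitative and only requires the bound at the single element $a^{1/2}$; this is a clean and arguably more elementary way to kill the cut-downs.

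The one place where you have hidden rather than done the work is the assertion that compact subsets of $\tilde{T}(A)$ are pointwise bounded on $\mathcal{P}(A)_+$, which gives you $C=\sup_{\rho\in K}\rho(a^{1/2})<\infty$. This is true, but it is precisely the non-obvious point that the paper's $\sum_n 2^{-n}d_n$ argument is designed to establish: evaluation at a Pedersen-ideal element is only lower semicontinuous, so boundedness on a compact set does not come for free. You should either run the same gliding-hump argument at $a^{1/2}$, or deduce the bound from the structure of $\mathcal{P}(A)_+$ (every $b\in\mathcal{P}(A)_+$ is dominated by a finite sum of cut-downs $(c_i-\e_i)_+$, to which the $\limsup$ criterion applies). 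Relatedly, your parenthetical ``by hereditarity of the Pedersen ideal'' does not justify $a^{1/2}\in\mathcal{P}(A)_+$, since $a^{1/2}$ is not dominated by a multiple of $a$; the correct reason, used in the paper, is that the hereditary subalgebra generated by $a$ is contained in $\mathcal{P}(A)$ (\cite[Proposition~5.6.2]{Pedersen79}). With these two points repaired, your argument is a complete and valid alternative proof.
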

\begin{proof}
The fact that there exists a sequence $(\theta_n)_{n \in \N}$ in $K$ such that the associated generalized limit trace is a multiple of $\tau$ follows directly from \cite[Lemma 2.10 and Remark 2.11]{Szabo21si}.
Let $B := \overline{a^{1/2}Aa^{1/2}}$ denote the hereditary subalgebra generated by $a^{1/2}$. Then $B \subseteq \mathcal{P}(A)$, see for example \cite[Proposition~5.6.2]{Pedersen79}.
Since $K$ is compact we claim that $\sup_{\sigma \in K} \|\sigma \big \lvert_B\| < \infty$. Suppose that this would not be the case, then for all $n \in \N$ we could find a $\sigma_n \in K$ and a positive contraction $d_n \in B$ such that $\sigma_n(d_n) \geq n 2^n$. Consider $d := \sum_{n=1}^\infty 2^{-n}d_n \in B$ and $\sigma = \lim_{n \rightarrow \omega} \sigma_n \in K$ (using compactness of $K$). Then for each $n \in \N$ we would get 
\[\sigma(d) = \lim_{n \rightarrow \omega} \sigma_n(d) \geq  \lim_{n \rightarrow \omega} \sigma_n(2^{-n} d_n) = \infty,\]
 but this is a contradiction since $d$ belongs to the Pedersen ideal. As a consequence we get that, when restricted to the hereditary subalgebra $\overline{a^{1/2}\ell^\infty(A)a^{1/2}}\subseteq\ell^\infty(B)$, the trace formed by $\lim_{n \rightarrow \omega} \theta_n$ is already bounded and hence continuous, so formula \eqref{eq:no_cutdowns} holds.
\end{proof}

The following proposition is a useful lifting property in various contexts.
The proof relies on the concept of $G$-$\sigma$-ideals, see \cite[Definition 4.1]{Szabo18ssa2}.
Let $\alpha: G\acts A$ and $\beta: G\acts B$ be actions on \cstar-algebras.
As in \cite{Kirchberg04}, we call an equivariant surjective $*$-homomorphism $\pi: (A,\alpha)\to (B,\beta)$ strongly locally semisplit, if for every separable $\beta$-invariant \cstar-subalgebra $D\subseteq B$, there exists an equivariant c.p.c.\ order zero map $\phi: (D,\beta)\to (A,\alpha)$ such that $\pi \circ \phi =\operatorname{id}_D$.

\begin{prop} \label{prop:fixed_point_surjectivity}
Let $A$ be a separable simple \cstar-algebra with $Q\tilde{T}_2(A)=\tilde{T}(A)$ and $T^+(A)\neq\emptyset$.
Let $\alpha: G \acts A$ be an action of a countable discrete group. 
Then the quotient map
\[
( F_\omega(A), \tilde{\alpha}_\omega) \rightarrow (A^{\omega} \cap A', \alpha^\omega)
\]
is strongly locally semisplit.
\end{prop}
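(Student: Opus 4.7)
The plan is to deduce the statement from the general principle (Kirchberg's lifting theorem, cf.\ its equivariant formulation in \cite{Szabo18ssa2}) that the quotient map by a $G$-$\sigma$-ideal in the sense of \cite[Definition 4.1]{Szabo18ssa2} is always strongly locally semisplit. Concretely, the main task is to verify that the trace-kernel ideal $\mathcal{J}_A \subseteq F_\omega(A)$ is a $\tilde{\alpha}_\omega$-$\sigma$-ideal. Once this is done, the strong local semisplitness is obtained in the standard way: given a separable $\alpha^\omega$-invariant subalgebra $D \subseteq A^\omega \cap A'$, pick a $*$-linear lift of $D$ into $F_\omega(A)$, enlarge its range to a separable $\tilde{\alpha}_\omega$-invariant subalgebra $\tilde D \subseteq F_\omega(A)$, and use the absorbing $G$-fixed positive contraction $e \in \mathcal{J}_A \cap \tilde D'$ to define the splitting $\phi(d) := e^{1/2} \tilde d\, e^{1/2}$, which is equivariant, c.p.c.\ order zero, and lifts the identity on $D$.

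To verify the $G$-$\sigma$-ideal property, fix a separable $\tilde{\alpha}_\omega$-invariant subalgebra $C \subseteq F_\omega(A)$; I must produce a $\tilde{\alpha}_\omega$-fixed positive contraction $e \in \mathcal{J}_A \cap C'$ with $ec = c$ for every $c \in \mathcal{J}_A \cap C$. The approach is the standard Kirchberg $\varepsilon$-test combined with a reindexing argument. Enumerate $G = \{g_k\}_{k \geq 1}$ (using countability), and fix dense sequences $(c_k)_{k\geq 1}$ in $C$ and $(d_k)_{k\geq 1}$ in $\mathcal{J}_A \cap C$. For each $n \in \N$, I would construct a positive contraction $e_n \in F_\omega(A)$ that simultaneously lies in $\mathcal{J}_A$, satisfies $\|[e_n,c_k]\| < 1/n$ and $\|e_n d_k - d_k\| < 1/n$ for $k \leq n$, and satisfies $\|\tilde{\alpha}_{\omega,g_k}(e_n) - e_n\| < 1/n$ for $k \leq n$. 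Passing to a suitable diagonal/reindexing limit in $F_\omega(A)$ then yields an $e$ satisfying all the properties exactly.

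The construction of $e_n$ is where the technical content sits: one lifts $C$ and the $d_k$ to central sequences in $A$, and uses the characterization of $\mathcal{J}_A$ through the vanishing of the tracial seminorms associated to $\tau_a$ for $\tau \in \tilde T_\omega(A)$ and $a \in A_+$ with $0<\tau(a)<\infty$. Lemmas \ref{lemma:approximation_traces} and \ref{lemma:compact_generator_no_cutdowns} supply the key flexibility: they allow one to approximate the relevant traces by ones coming from a compact generator $K \subseteq T^+(A)$ while controlling cutdowns, which is exactly what is needed to produce positive contractions in the ideal that absorb prescribed elements. Functional calculus and the standard trick of choosing approximate units inside the trace-kernel ideal provide the absorbing property on a finite set of $d_k$'s and approximate commutation with a finite set of $c_k$'s.

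The step I expect to be the main obstacle is achieving approximate $\tilde{\alpha}_\omega$-invariance of $e_n$ without any amenability assumption on $G$. The key observation is that only finitely many group elements $g_1,\dots,g_n$ need to be accommodated at each stage, and $\mathcal{J}_A$ is already $\tilde{\alpha}_\omega$-invariant (since the collection of generalized limit traces is $G$-invariant as a set, even when individual traces are not). Consequently, for a given finite $F \ssubset G$ and tolerance $\varepsilon>0$, one can find positive contractions in $\mathcal{J}_A$ that are absorbing on a prescribed finite subset of $\mathcal{J}_A \cap C$ and simultaneously $\alpha_g$-approximately fixed for $g \in F$: any positive contraction absorbing the $G$-translates (restricted to a finite set of indices) of the given $d_k$ automatically works. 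Routing this through the $\varepsilon$-test and the reindexing in $F_\omega(A)$ collapses the approximate $G$-invariance to genuine $\tilde{\alpha}_\omega$-fixedness in the ultrapower, completing the argument.
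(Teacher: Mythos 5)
Your overall architecture coincides with the paper's: reduce to showing that $\mathcal{J}_A\subseteq F_\omega(A)$ is a $G$-$\sigma$-ideal and invoke \cite[Proposition 4.5(ii)]{Szabo18ssa2}, characterize membership in $\mathcal{J}_A$ via a compact generator $K\subset T^+(A)$ and a single Pedersen-ideal element, and finish with Kirchberg's $\e$-test. The genuine gap sits exactly in the step you yourself flag as the main obstacle: producing, for a finite $F\ssubset G$, a positive contraction $e\in\mathcal{J}_A$ that is simultaneously approximately central, approximately absorbing on finitely many elements of $\mathcal{J}_A\cap C$, and approximately $\alpha_g$-fixed for $g\in F$. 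Your proposed mechanism --- that ``any positive contraction absorbing the $G$-translates (restricted to a finite set of indices) of the given $d_k$ automatically works'' --- is false: absorption of the translates $\tilde{\alpha}_{\omega,g}(d_k)$ places no constraint on $\|\tilde{\alpha}_{\omega,g}(e)-e\|$ (take all $d_k=0$ to see this starkly). The $\tilde{\alpha}_\omega$-invariance of $\mathcal{J}_A$ as a set only guarantees that $\tilde{\alpha}_{\omega,g}(e)$ again lies in $\mathcal{J}_A$, not that it is close to $e$. What is actually needed is Kasparov's technical lemma \cite[Lemma 1.4]{Kasparov88}, which is what the paper invokes: a convexity argument on quasicentral approximate units of the ideal that yields an element simultaneously approximately central for finitely many $d^{(k)}$, approximately absorbing for finitely many $c^{(k)}$, and approximately invariant under finitely many automorphisms preserving the ideal. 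This is a genuine extra ingredient, not a formal consequence of the other properties, so as written your verification of the $\sigma$-ideal property does not go through.

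A secondary slip: in your sketch of how the $\sigma$-ideal property yields the splitting, the order zero lift should be $d\mapsto (1-e)^{1/2}\tilde{d}(1-e)^{1/2}$ rather than $e^{1/2}\tilde{d}e^{1/2}$. Since $e\in\mathcal{J}_A$ absorbs $\mathcal{J}_A\cap\tilde{D}$, it is the complement $1-e$ that kills the ambiguity in the choice of lift, whereas your formula composes with the quotient map to zero. This does not affect the logical chain, since you also delegate this step to the general principle from \cite{Szabo18ssa2}, but it should be corrected.
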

\begin{proof}
By \cite[Proposition 4.5(ii)]{Szabo18ssa2}, it suffices to prove that $\mathcal{J}_A\subset F_\omega(A)$ is a $G$-$\sigma$-ideal.
Fix an element $0\neq a\in\mathcal{P}(A)_+$ and a compact generator $K\subset T^+(A)$.\footnote{As pointed out in the footnote after defining compact generators in \ref{nota:basic-notation}, this always exists as a consequences of simplicity.}
By \cite[Proposition 2.4]{Szabo21si} and Lemma \ref{lemma:approximation_traces}, we can conclude that $\mathcal{J}_A$ coincides with the ideal of those elements $x\in F_\omega(A)$ such that ${\tau_a(x^*x)=0}$ for all $\tau\in\tilde{T}_\omega(A)$ induced by any sequence $\tau_n\in K$.
Since $a$ belongs to the Pedersen ideal and $K$ is compact, this further implies that an element $x\in F_\omega(A)$ represented by a sequence $(x_n)_{n \in \N}$ in $A$ belongs to $\mathcal{J}_A$ precisely when $\lim_{n\to\omega} \max_{\tau\in K} \tau(a^{1/2}x_n^*x_na^{1/2}) = 0$.

We proceed to show that $\mathcal{J}_A$ is a $G$-$\sigma$-ideal.
Let $D\subset F_\omega(A)$ be a separable $\tilde{\alpha}_\omega$-invariant \cstar-subalgebra.
Let $(d_{k,n})_{n,k\in \N}, (c_{k,n})_{n,k\in \N}$ be two bounded double sequences in $A$ such that for each $k\in \N$, the sequences $(d_{k,n})_{n\in \N}$ and $(c_{k,n})_{n\in \N}$ are approximately central, the set $\{ d^{(k)}=[(d_{k,n})_{n\in\N}] \mid k\in \N \}$ defines a dense subset in the unit ball of $D$, and the set $\{ c^{(k)}=[(c_{k,n})_{n\in\N}] \mid k\in \N \}$ defines a dense subset in the unit ball of $D\cap\mathcal{J}_A$.
By Kasparov's Lemma \cite[Lemma 1.4]{Kasparov88}, we can find for any $\e>0$, $F\ssubset G$ and $m\in\N$ a positive element $e\in\mathcal{J}_A$ such that
\[
\max_{k\leq m} \|[e,d^{(k)}]\|\leq\e,\ \max_{k\leq m} \|(1-e)c^{(k)}\|\leq\e, \text{ and } \max_{g\in F} \|e-\tilde{\alpha}_\omega(e)\|\leq\e.
\]
Let $b\in A$ be a strictly positive contraction.
If we represent $e$ by an approximately central sequence $(e_n)_{n \in \N}$ of positive contractions in $A$, then it follows that
\[
\max_{k\leq m} \lim_{n\to\omega} \|[e_n,d_{k,n}]b\|\leq\e,\ \max_{k\leq m} \lim_{n\to\omega} \|(1-e_n) c_{k,n} b\|\leq\e,
\]
and
\[
\max_{g\in F} \lim_{n\to\omega} \|(e_n-\alpha_g(e_n))b\|\leq\e ,\ \lim_{n\to\omega} \max_{\tau\in K} \tau(a^{1/2}e_na^{1/2}) = 0.
\]
Appealing to Kirchberg's $\e$-test \cite[Lemma 3.1]{KirchbergRordam14}, we can find another approximately central sequence $(e_n)_{n \in \N}$ of positive contractions in $A$ satisfying the stronger property
\[
\lim_{n\to\omega} \big( \|[e_n,d_{k,n}]b\| + \|(1-e_n) c_{k,n} b\| + \|(e_n-\alpha_g(e_n))b\| \big) =0 \text{ and } \lim_{n\to\omega} \max_{\tau\in K} \tau(a^{1/2}e_na^{1/2}) = 0
\]
for all $k\in \N$ and $g\in G$.
This means that this sequence represents a positive contraction $e\in(\mathcal{J}_A\cap D')^{\tilde{\alpha}_\omega}$ such that $ec=c$ for all $c\in\mathcal{J}_A\cap D$.
This finishes the proof.
\end{proof}

To end this preliminary section, we prove the following tracial inequality.
\begin{lemma}\label{lemma:Powers-Stormer}
Let $B$ be a \cstar-algebra with $a,b \in B_+$ and $\tau \in T(B)$. Then
\[\|a-b\|_{2,\tau}^2 \leq \|a^2-b^2\|_{1,\tau}. \]
\end{lemma}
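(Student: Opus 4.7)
The plan is to reduce the inequality to a computation inside the tracial von Neumann algebra $\mathcal{M} := \pi_\tau(B)''$ associated to the GNS construction for $\tau$, where $\tau$ extends to a faithful normal tracial state (still denoted $\tau$) and where we have access to spectral projections. The main trick is to introduce a sign-type partial symmetry for $x := a - b$.

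Concretely, I would write $x = x_+ - x_-$ with $x_\pm \in B_+$ and $x_+ x_- = 0$, and let $p, q \in \mathcal{M}$ be the support projections of $x_+$ and $x_-$ respectively. Since $pq = 0$, the element $r := p - q$ is a self-adjoint contraction in $\mathcal{M}$ satisfying the crucial identity $rx = xr = x_+ + x_- = |x|$. Using the symmetric expansion $a^2 - b^2 = \tfrac{1}{2}\big((a+b)x + x(a+b)\big)$ together with cyclicity of $\tau$, a short calculation then shows
\[
\tau\big(r(a^2 - b^2)\big) = \tau\big(|x|(a+b)\big).
\]
Since $r$ is a self-adjoint contraction and $a^2-b^2$ is self-adjoint, the Hahn--Jordan-type inequality $\tau(rY) \leq \tau(|Y|)$ gives $\tau(|x|(a+b)) \leq \|a^2 - b^2\|_{1,\tau}$.

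Finally, to compare $\tau(|x|(a+b))$ with $\|a-b\|_{2,\tau}^2 = \tau(x_+^2) + \tau(x_-^2)$, I would use $x_+^2 = x_+(a-b)$ and $x_-^2 = x_-(b-a)$ to expand both quantities as signed sums of the four nonnegative numbers $\tau(x_\pm a), \tau(x_\pm b)$. The difference works out to
\[
\tau(|x|(a+b)) - \tau((a-b)^2) = 2\tau(x_+ b) + 2\tau(x_- a) \geq 0,
\]
and concatenating the two inequalities yields the claim.

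The only delicate point is that the sign partial symmetry $r$ must be formed in $\mathcal{M}$ rather than in $B$, but this is harmless because the inequality to be proved only involves values of the trace. Everything else is a matter of cyclicity and positivity.
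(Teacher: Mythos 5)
Your argument is correct and is essentially the paper's own proof in different packaging: your projections $p,q$ are the spectral projections of $a-b$ used there, your positivity inputs $\tau(x_+b)\geq 0$ and $\tau(x_-a)\geq 0$ are literally the quantities $\tau(b^{1/2}(a-b)pb^{1/2})$ and $\tau(a^{1/2}(b-a)qa^{1/2})$ appearing in the paper, and your final bound $\tau(r(a^2-b^2))\leq\tau(|a^2-b^2|)$ matches the paper's last display with $r=p-q$. The only cosmetic difference is that you combine the two one-sided estimates into a single symmetric identity via $a^2-b^2=\tfrac12\bigl((a+b)x+x(a+b)\bigr)$.
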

\begin{proof}
If we replace $B$ by its weak closure of the GNS representation $\pi_\tau(B)''$, it is enough to show this in case $B$ is a von Neumann algebra with faithful normal tracial state $\tau$. 

Historically, this was proved by Powers and St{\o}rmer in \cite[Lemma 4.1]{PowersStormer70} in case $B = M_n(\C)$ for some $n \in \N$.
When $B$ is a von Neumann algebra with faithful normal tracial state $\tau$ this follows from applying \cite[Lemma 2.10]{Haagerup75}, which is formulated for the space $L^2(B,\tau)$, to elements in $B \subset L^2(B,\tau)$.
For the reader's convenience we give here a more direct proof using an idea from \cite[Theorem 7.3.7]{DelarochePopa14}.

Given $a, b \in B_+$, let $p,q$ denote the spectral projections of $a-b$ corresponding to $[0,+\infty)$ and $(-\infty, 0)$, respectively.
This means that $a-b =(p-q)|a-b|$ and $p\perp q$.
First of all, we have
\begin{align*}
\tau((a^2-b^2)p) - \tau((a-b)^2p) &= \tau(b(a-b)p) + \tau((a-b)bp) \\
&= 2\tau(b^{1/2}(a-b)pb^{1/2}) \geq 0,
\end{align*}
since $(a-b)p \geq 0$. 
So we get 
\begin{equation}\label{eq:tracial_inequality_p}
\tau((a-b)^2p) \leq \tau((a^2-b^2)p),
\end{equation}
and in a similar way we can obtain that
\begin{equation}\label{eq:tracial_inequality_q}
\tau((b-a)^2q) \leq \tau((b^2-a^2)q).
\end{equation}
Combining equations \eqref{eq:tracial_inequality_p} and \eqref{eq:tracial_inequality_q} gives
\begin{equation*}
\tau((a-b)^2) = \tau((a-b)^2(p+q)) \leq \tau((a^2-b^2)(p-q)).
\end{equation*}
Also
\begin{align*}
\tau((a^2-b^2)(p-q))= \tau(p(a^2 - b^2)p) + \tau(q(b^2-a^2)q) \leq \tau(|a^2-b^2|(p+q)) \leq \|a^2-b^2\|_{1,\tau},
\end{align*}
since $\|p+q\| \leq 1$. This implies the result.
\end{proof}

\section{Equivariant uniform property Gamma}

The notion of uniform property Gamma was introduced in \cite{CETWW21} and further studied in \cite{CETW22}, where it served as a uniform \cstar-algebraic version of property Gamma introduced by Murray and von Neumann for II$_1$ factors \cite{MurrayvNeumann43}.
Recently, a dynamical version of this property was introduced in the separable unital setting in \cite{GardellaHirshbergVaccaro}, called \emph{equivariant uniform property Gamma}.
Here we revise the definition to account for separable \cstar-algebras with possibly unbounded traces, generalizing the concept called ``stabilised property Gamma'' by Castillejos--Evington \cite[Definition 2.5]{CastillejosEvington21}.
We choose not to adopt that name because one can argue that uniform property Gamma ought to be a stable property in the first place, just like property Gamma is for von Neumann algebras.
In light of recent work by Lin \cite{Lin22} who proposed a more general framework for \cstar-algebras that admit genuine quasitraces, we shall state the definition only in the absence of such.

For separable unital simple exact \cstar-algebras, the definition below corresponds to the earlier definition given in \cite{GardellaHirshbergVaccaro} (see Proposition \ref{prop:unital_Gamma_agrees_local_Gamma} below), but not in the non-simple case, as demonstrated by \cstar-algebras that arise as extensions of unital classifiable \cstar-algebras by the compacts (see the type of example mentioned in Remark \ref{rem:weird-examples-for-Gamma}, for instance).

\begin{definition}\label{definition:equivariantGamma}
Let $A$ be a separable \cstar-algebra with $Q\tilde{T}_2(A)=\tilde{T}(A)$ and $T^+(A) \neq \emptyset$, and let $\alpha:G \acts A$ be an action by a countable discrete group.
We say that $\alpha$ has \emph{equivariant uniform property Gamma} (or \emph{equivariant property Gamma} for short) if for all $n \in \N$, there exist pairwise orthogonal projections $p_1, \hdots, p_n \in (A^\omega\cap A')^{\alpha^\omega}$ such that for all $a \in A_+$ and $\tau \in \tilde{T}_\omega(A)$ with $\tau(a) < \infty$,
\[
\tau_a(p_i) = \frac{1}{n} \tau(a).
\]
\end{definition}

\begin{remark} \label{remark:Z-stability_implies_Gamma}
We can notice immediately from the naturality of the isomorphism in Proposition \ref{prop:tracial-central-sequence-algebra-stable} that equivariant uniform property Gamma is preserved under stable cocycle conjugacy.
That is, if $A$ and $B$ are \cstar-algebras as above and we have actions $\alpha: G\acts A$ and $\beta: G\acts B$ such that $\alpha\otimes\operatorname{id}_{\mathbb K}$ is cocycle conjugate to $\beta\otimes\operatorname{id}_{\mathbb K}$, then $\alpha^\omega$ is conjugate to $\beta^\omega$ via a map preserving the canonical traces.
In particular, equivariant uniform property Gamma holds for $\alpha$ if and only if it holds for $\beta$.

Next, we observe (cf.\ \cite[Proposition 2.3]{CETWW21}) that whenever $\alpha:G \acts A$ is an equivariantly $\mathcal{Z}$-stable action on a separable \cstar-algebra with $Q\tilde{T}_2(A)=\tilde{T}(A)$ and $T^+(A)\neq\emptyset$, it automatically has equivariant property Gamma.
Indeed, a cocycle conjugacy between $\alpha$ and $\alpha\otimes\operatorname{id}_\mathcal{Z}$ is easily seen to give rise to a unital $*$-homomorphism $\mathcal{Z}^\omega\cap \mathcal{Z}'\to (A^\omega\cap A')^{\alpha^\omega}$.
For this purpose one chooses an approximate unit $e_n\in A$ and considers a sequence of maps $\mathcal{Z}\to A\otimes\mathcal Z$, $x\mapsto e_n\otimes x$ composed with such a cocycle conjugacy, which is seen to induce such a homomorphism.
It is well-known that $\mathcal{Z}^\omega\cap \mathcal{Z}'$ admits unital embeddings of matrix algebras of arbitrary size $n\geq 2$.
So if we fix $n$ and define $p_1,\dots,p_n\in (A^\omega\cap A')^{\alpha^\omega}$ as the image of the canonical rank one projections inside a matrix algebra under the aforementioned $*$-homomorphism, then they satisfy the necessary requirements for equivariant property Gamma, by uniqueness of the trace on the $n\times n$ matrices.
\end{remark}

The following is a version of equivariant property Gamma for possibly nonseparable \cstar-algebras that exclusively takes into account the bounded traces. 
This agrees with \cite[Definition 3.1]{GardellaHirshbergVaccaro} for separable unital \cstar-algebras, but not with the general definition of equivariant property Gamma given above.

\begin{definition}\label{definition:localequivariantGamma}
Let $A$ be a $\sigma$-unital \cstar-algebra with $T(A)$ non-empty and compact, and let $\alpha: G \acts A$ be an action of a countable discrete group.
We say that $\alpha$ has \emph{local equivariant property Gamma with respect to bounded traces} if for all $n \in \N$ and ${\|\cdot\|_{2,T_\omega(A)}}$-separable subsets $S \subset A^{\omega,\rm b}$ there exist pairwise orthogonal projections $p_1, \hdots, p_n \in (A^{\omega,\rm b})^{\alpha^\omega} \cap S'$ such that $\tau(ap_i) = \frac{1}{n} \tau(a)$ for all $a \in S$ and $\tau \in T_\omega(A)$.
\end{definition}

In the unital separable simple setting, Definitions \ref{definition:equivariantGamma} and \ref{definition:localequivariantGamma} are equivalent.
We prove this fact in a slightly more general setting in the proposition below:

\begin{prop}\label{prop:unital_Gamma_agrees_local_Gamma}
Let $A$ be a simple separable \cstar-algebra with $Q\tilde{T}_2(A)=\tilde{T}(A)$ and such that $T(A)\neq\emptyset$ is compact and $T^+(A)=\R^{>0} T(A)$.\footnote{We note that this automatic if one assumes, e.g., that $A$ has continuous scale (see \cite[Definition 2.5]{Lin91}), which is a rather common assumption in the context of classification.}
Then an action $\alpha:G \acts A$ of a countable discrete group has equivariant property Gamma if and only if $\alpha$ has local equivariant property Gamma w.r.t.\ bounded traces.
\end{prop}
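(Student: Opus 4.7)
The plan is to prove both implications separately. The direction ($\Leftarrow$) is short: apply local equivariant property Gamma w.r.t.\ bounded traces to $S = A$, which is $\|\cdot\|_{2,T_\omega(A)}$-separable since the $C^*$-norm dominates $\|\cdot\|_{2,T(A)}$ and on $A$ this seminorm coincides with $\|\cdot\|_{2,T_\omega(A)}$. This yields projections $p_1,\ldots,p_n \in (A^{\omega,\rm b})^{\alpha^\omega}\cap A' = (A^\omega\cap A')^{\alpha^\omega}$ (the second equality by Remark~\ref{remark:consistent_notation_ultrapower}) satisfying the identity $\tau(ap_i) = \tfrac{1}{n}\tau(a)$ for $a \in A$ and $\tau \in T_\omega(A)$. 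To extend this identity to all $\tau \in \tilde T_\omega(A)$ with $\tau(a)<\infty$, Remark~\ref{remark:generalized_limit_traces_bounded_case} makes any such $\tau$ that is finite on a non-zero element of $A$ a scalar multiple of an ordinary limit trace, while traces identically vanishing on $A$ make the identity trivial via the bound $\tau_a(x) \leq \|x\|\tau(a)$.

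For the direction ($\Rightarrow$), assume equivariant property Gamma and fix $n$ together with a $\|\cdot\|_{2,T_\omega(A)}$-separable $S \subset A^{\omega,\rm b}$. Since $[p,\cdot]$ and $s \mapsto \tau(sp)$ are $\|\cdot\|_{2,T_\omega(A)}$-continuous for any contraction $p \in A^{\omega,\rm b}$, I may replace $S$ by a countable $\|\cdot\|_{2,T_\omega(A)}$-dense sequence of contractions $(s_k)_{k\in\N}$ with fixed representatives $s_k = [(s_k^{(j)})_{j \in \N}]$ of contractions in $A$, and enumerate $G = \{g_k : k \in \N\}$. The strategy is to apply Kirchberg's $\varepsilon$-test to sequences $(q_1^{(j)},\ldots,q_n^{(j)})_{j \in \N}$ of $n$-tuples of positive contractions in $A$, with test functions measuring, for each $k$, approximate projectionhood and pairwise orthogonality in $\|\cdot\|_{2,T(A)}$, approximate commutation $\|[q_i^{(j)}, s_k^{(j)}]\|_{2,T(A)}$, approximate $\alpha_{g_k}$-fixedness, and the trace defect $\sup_{\tau \in T(A)}|\tau(s_k^{(j)} q_i^{(j)}) - \tfrac{1}{n}\tau(s_k^{(j)})|$. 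A sequence making all of these vanish in the $\omega$-limit realises the desired exact projections in $(A^{\omega,\rm b})^{\alpha^\omega}\cap S'$.

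The key input is a finitary reformulation of equivariant property Gamma: for every $F_A \ssubset A$, $E \ssubset G$ and $\delta > 0$, there exist positive contractions $q_1,\ldots,q_n \in A$ simultaneously satisfying $\|q_i - q_i^2\|_{2,T(A)}$, $\|q_i q_{i'}\|_{2,T(A)}$, $\|[q_i, a]\|_{2,T(A)}$, $\|q_i - \alpha_g(q_i)\|_{2,T(A)}$ and $\sup_{\tau \in T(A)} |\tau(aq_i) - \tfrac{1}{n}\tau(a)|$ all less than $\delta$ for distinct $i,i'\leq n$, $a \in F_A$ and $g \in E$. This is obtained by lifting the projections furnished by equivariant property Gamma to sequences of positive contractions in $A$: each defining condition translates to an $\omega$-vanishing statement on the representatives, and the finite intersection of the corresponding $\omega$-large sets remains $\omega$-large; the required uniformity in $\tau$ comes from the compactness of $T(A)$ together with the sequential nature of limit traces. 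I then apply this finitary statement separately for each index $j$, with $F_A^{(j)} = \{s_k^{(j)} : k \in F\}$ and the $\delta$ supplied by the $\varepsilon$-test, producing tuples $(q_i^{(j)})$ verifying the finite collection of test functions with slack $\delta$ in the $\omega$-limit; the $\varepsilon$-test then upgrades these approximate sequences to an exact one. The delicate point — and where the argument would demand the most care — is precisely the uniformity in $\tau$ in the finitary version: it is what lets the per-index application to varying sets $F_A^{(j)}$ yield the desired exact trace identity in $A^{\omega,\rm b}$ rather than merely an approximate one.
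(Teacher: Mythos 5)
Your proof is correct and follows essentially the same route as the paper: both directions reduce, via Remarks \ref{remark:generalized_limit_traces_bounded_case} and \ref{remark:consistent_notation_ultrapower}, to identifying the two ultrapower pictures, after which the forward implication is exactly the reindexation step that the paper states and omits. You have simply written out that omitted reindexation in full (finitary reformulation plus Kirchberg's $\e$-test, with the uniformity in $\tau$ extracted from the quantification over all limit traces), and the details check out.
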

\begin{proof}
The assumptions on $A$ imply that every generalized limit trace on $A$ that is finite on some non-zero positive element of $A$ is a multiple of an ordinary limit trace (see Remark \ref{remark:generalized_limit_traces_bounded_case}), and that $A^\omega \cap A' = A^{\omega, \mathrm{b}} \cap A'$ (see Remark \ref{remark:consistent_notation_ultrapower}). Therefore, it suffices to show that the existence of pairwise orthogonal projections $p_1, \hdots, p_n \in (A^\omega \cap A')^{\alpha^\omega}$ such that $\tau(a p_i) = \frac{1}{n}\tau(a)$ for all $a \in A$ and $\tau \in T_\omega(A)$, implies for any $\|\cdot\|_{2,T_\omega(A)}$-separable $S \subset A^\omega$ the existence of pairwise orthogonal projections $p'_1, \hdots, p'_n \in (A^\omega \cap S')^{\alpha^\omega}$ such that $\tau(ap'_i) = \frac{1}{n}\tau(a)$ for all $a \in S$ and $\tau \in T_\omega(A)$. This follows by a standard reindexation argument, which we omit. 
\end{proof}

The next part of this section is devoted to proving an equivalence between equivariant property Gamma for an action $\alpha:G \acts A$ and local equivariant property Gamma w.r.t.\ bounded traces for its induced action $\alpha^\omega: G \acts A^\omega\cap A'$, at least in the setting when $A$ is simple nuclear and has stable rank one.\footnote{Although we use it in the proof, it is likely that stable rank one is not so important for the claim to hold, although we take no guess as to pinning down the correct general assumptions. We note however, that simple finite $\mathcal Z$-stable \cstar-algebras have stable rank one; see \cite{Rordam04srr,FuLiLin22}.}
Recall that $A$ is said to have stable rank one, if the invertibles of $\tilde{A}$ are dense in $\tilde{A}$.
We start by observing the following description of the tracial state space of $A^\omega\cap A'$. 

\begin{prop} \label{prop:tracesF(A)}
Let $A$ be a separable, simple, nuclear \cstar-algebra with uniform property Gamma and stable rank one.
Then every tracial state on $A^\omega\cap A'$ is a canonical trace, i.e., one has
\[
T(A^\omega\cap A') = \overbar{\mathrm{conv}}^{w^*}\{ \tau_a \mid \tau\in \tilde{T}_\omega(A),\ a \in A_+,\ \tau(a)=1 \}.
\]
\end{prop}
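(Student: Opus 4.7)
The inclusion $\supseteq$ is immediate: canonical traces on $F_\omega(A)$ vanish on $\mathcal J_A$ by definition, hence descend to tracial states on $A^\omega\cap A'$, and the right-hand side is weak-$*$-closed and convex in $T(A^\omega\cap A')$. For the nontrivial inclusion $\subseteq$, the plan is a Hahn-Banach separation argument. Suppose for contradiction that some $\rho\in T(A^\omega\cap A')$ does not lie in the weak-$*$-closed convex hull $C$ of canonical traces. Then Hahn-Banach yields a self-adjoint $x\in A^\omega\cap A'$ with
\[
\rho(x) \;>\; \sup\{\tau_a(x):\tau\in\tilde T_\omega(A),\ a\in A_+,\ \tau(a)=1\},
\]
so the task reduces to establishing the opposite inequality for every self-adjoint $x$ and every tracial state $\rho$.

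To establish that upper bound, I would reduce to the unital simple case with compact tracial state space, where the CPoU machinery of \cite{CETWW21, CETW22} applies directly. Using Proposition \ref{prop:tracial-central-sequence-algebra-stable} together with simplicity and stable rank one (which together imply that all full hereditary subalgebras of $A\otimes\mathbb K$ are mutually Morita equivalent), one can replace $A$ with a full hereditary subalgebra $B\subseteq A\otimes\mathbb K$ having compact $T(B)$. Stable rank one additionally ensures, via Cuntz comparison, that the family of such $B$ is rich enough to exhaust all canonical traces on $A^\omega\cap A'$ by those on the various $B^\omega\cap B'\cong A^\omega\cap A'$. In each unital $B$, Remark \ref{remark:generalized_limit_traces_bounded_case} identifies generalized limit traces with scalar multiples of bounded limit traces, so canonical traces on $B^\omega\cap B'=B^{\omega,\mathrm b}\cap B'$ coincide with the $w^*$-closed convex hull of limit traces in the sense of \cite{CETWW21}.

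Within this unital compact-trace framework, uniform property Gamma (a Morita invariant by virtue of the canonical trace-preserving isomorphism in Proposition \ref{prop:tracial-central-sequence-algebra-stable}) combined with the weak CPoU of \cite[Lemma 3.6]{CETWW21}, or the more general version of \cite{CCEGSTW} cited in the introduction, yields CPoU in full. It is then standard that CPoU implies the desired trace description: given the contradicting self-adjoint $x$, property Gamma supplies orthogonal projections $p_1,\dots,p_k\in B^\omega\cap B'$ with $\tau_a(p_j)=\tau(a)/k$, CPoU refines this partition so that on each piece $x$ is tracially controlled by a single limit trace, and reassembling produces a convex combination of $\tau_a$'s whose value at $x$ is arbitrarily close to $\rho(x)$, contradicting the Hahn-Banach inequality.

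The main obstacle will be to make the reduction to the unital compact-trace situation uniform across all canonical tracial data on $A^\omega\cap A'$, since the cone $T^+(A)$ need not be compactly generated by tracial states of any single hereditary subalgebra. This is precisely the point at which stable rank one enters, by producing, through Cuntz comparison, enough hereditary subalgebras with compact $T(B)$ to capture every generalized limit trace $\tau\in\tilde T_\omega(A)$ scaled by some $a\in A_+$ with $0<\tau(a)<\infty$; and it is also the reason one needs the more general weak CPoU of \cite{CCEGSTW} rather than only the original version of \cite{CETWW21}.
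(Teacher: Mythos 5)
Your overall strategy (reduce via stable rank one to a hereditary subalgebra of $A\otimes\mathbb K$ with compact tracial data, then invoke the CPoU machinery) is the same as the paper's, but the step you flag as ``the main obstacle'' is exactly the step you leave unproved, and your proposed workaround does not work as described. You suggest that $T^+(A)$ ``need not be compactly generated by tracial states of any single hereditary subalgebra'' and that one should therefore exhaust the canonical traces by a \emph{family} of hereditary subalgebras produced by Cuntz comparison. The paper's proof hinges on the opposite fact: by \cite[Theorem 7.13]{AntoinePereraRobertThiel22} (this is where stable rank one is used), one can choose a \emph{single} non-zero hereditary subalgebra $B\subseteq A\otimes\mathbb K$ with $T(B)$ non-empty and compact and $T^+(B)=\R^{>0}T(B)$. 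Combined with Brown's theorem and Proposition \ref{prop:tracial-central-sequence-algebra-stable}, this single $B$ already captures \emph{all} canonical tracial data, because by Remark \ref{remark:generalized_limit_traces_bounded_case} every generalized limit trace on $B_\omega$ that is finite somewhere is a multiple of a bounded limit trace. Your family-of-subalgebras exhaustion is left entirely unspecified (how the various isomorphisms $B^\omega\cap B'\cong A^\omega\cap A'$ are to be reconciled with a single tracial state $\rho$ on $A^\omega\cap A'$ is not addressed), and this is the crux of the argument, so the gap is genuine.

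Two further inaccuracies. First, you assert that the more general weak CPoU of \cite{CCEGSTW} is needed here; it is not. Since $B$ is nuclear and has uniform property Gamma (a consequence of Proposition \ref{prop:tracial-central-sequence-algebra-stable} applied to $A$), \cite[Lemma 3.7]{CETWW21} already gives CPoU for $B$. Second, your Hahn--Banach/CPoU sketch produces a convex combination of limit traces approximating $\rho$ on $B^\omega$, but the proposition concerns tracial states on the relative commutant $B^\omega\cap B'$, where one must show every trace is approximated by traces of the form $\tau_a$ with $a$ a positive element cut down into the algebra. In the unital case this is \cite[Proposition 4.6]{CETWW21}; in the non-unital case (which you do not treat) the paper passes to a unital inclusion $B^\dagger\subset B^\omega$, applies \cite[Proposition 5.7]{CastillejosEvington20}, and then replaces $a\in B^\dagger_+$ by $e_nae_n\in B_+$ via an approximate unit to land back in the claimed form. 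Your sketch conflates $T(B^\omega)$ with $T(B^\omega\cap B')$ and omits this cutting-down step.
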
 
\begin{proof}
Using exactly the same argument as in the proof of \cite[Lemma 3.3]{CastillejosEvington21} and modifying it as hinted in the remark stated before \cite[Theorem 3.4]{CastillejosEvington21}, we may appeal to \cite[Theorem 7.13]{AntoinePereraRobertThiel22} (since we assume stable rank one) and pick a non-zero hereditary \cstar-subalgebra $B \subset A \otimes \mathbb{K}$ with $T^+(B)=\mathbb{R}^{>0} T(B)$ and for which $T(B)$ is non-empty and compact.
By Brown's theorem, it follows that $A$ and $B$ are stably isomorphic.
Proposition \ref{prop:tracial-central-sequence-algebra-stable} implies that we have an isomorphism $A^\omega\cap A'\cong B^\omega\cap B'$ that induces a bijection between the canonical traces on the left and the right.
Hence the claim holds for $A$ if and only if it holds for $B$.

Now $B$ has uniform property Gamma (cf.\ Remark \ref{remark:Z-stability_implies_Gamma}), so \cite[Lemma 3.7]{CETWW21} implies that $B$ has CPoU.
By the `no silly trace' theorem \cite[Proposition 2.5]{CETW21}\footnote{Strictly speaking the conclusion is about the reduced tracial product $B^\infty$ in the reference, but this makes no difference to the argument there.}, one has that $T(B^\omega)$ is the weak-$*$-closed convex hull of the limit traces.
If $B$ is unital, then the claim follows directly from \cite[Proposition 4.6]{CETWW21}.
If $B$ is non-unital, we can extend the inclusion map $B\subset B^\omega$ to a unital inclusion $B^\dagger\subset B^\omega$.
From this point of view, we have a trivial equality of algebras
\[
B^\omega\cap B' = B^\omega\cap (B^\dagger)'\cap \{ 1_{B^\omega}-1_{B^\dagger} \}^\perp.
\]
In this case it follows from \cite[Proposition 5.7]{CastillejosEvington20} that $T(B^\omega\cap B')$ is the closed convex hull of traces of the form $\tau_{a}$, where $\tau\in T_\omega(B)$ is a limit trace and $a\in B^\dagger$ is a positive element with $\tau(a)=1$.
If $(e_n)_{n\in\N}$ is an increasing approximate unit in $B$, then $b_n=e_nae_n\in B$ converges to $a$ strictly, and hence $\|b_n-a\|_{2,\tau}\to 0$.
This implies the convergence of tracial states $\tau(b_n)^{-1}\tau_{b_n} \to \tau_a$ in the norm topology, so we observe the equality 
\[
T(B^\omega\cap B') = \overbar{\mathrm{conv}}^{w^*} \{ \tau_b \mid \tau\in T_\omega(B),\ b\in B_+,\ \tau(b)=1\},
\]
which proves the claim.
\end{proof}

\begin{theorem}\label{theorem:gamma_equivalence}
Let $A$ be a separable, simple, nuclear \cstar-algebra with stable rank one.
Then $\alpha: G \acts A$ has equivariant uniform property Gamma if and only if $\alpha^\omega: G \acts A^\omega\cap A'$ has local equivariant uniform property Gamma w.r.t.\ bounded traces.
\end{theorem}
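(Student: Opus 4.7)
The plan is to establish both implications by reindexation arguments that leverage Proposition \ref{prop:fixed_point_surjectivity} (strong local semisplitness of the quotient $F_\omega(A)\to A^\omega\cap A'$) and the characterization of $T(A^\omega\cap A')$ from Proposition \ref{prop:tracesF(A)} as the weak-$*$-closed convex hull of the canonical traces $\tau_a$ with $\tau\in\tilde{T}_\omega(A)$ and $a\in A_+$ satisfying $\tau(a)=1$.

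For the forward direction, assume $\alpha$ has equivariant uniform property Gamma. Fix $n\in\N$ and a $\|\cdot\|_{2,T_\omega(A^\omega\cap A')}$-separable subset $S\subseteq (A^\omega\cap A')^{\omega,\rm b}$, and choose a countable dense sequence $(s_k)_{k\in\N}$ in $S$. Lift each $s_k$ to a bounded representing sequence $(s_k^{(m)})_{m\in\N}$ in $A^\omega\cap A'$. Equivariant Gamma supplies, for each $m\in\N$, projections in $(A^\omega\cap A')^{\alpha^\omega}$ of the desired kind; by Proposition \ref{prop:fixed_point_surjectivity} applied to a separable equivariant subalgebra containing these projections and the elements $\{s_k^{(m)}\mid k\le m\}$, we may arrange that the chosen projections additionally commute with that finite set while remaining witnesses of equivariant Gamma. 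A diagonal extraction governed by Kirchberg's $\e$-test then produces, for each $i=1,\dots,n$, a sequence $(q_i^{(m)})_{m\in\N}$ in $A^\omega\cap A'$ whose class in $(A^\omega\cap A')^{\omega,\rm b}$ is $(\alpha^\omega)^\omega$-fixed, lies in $S'$, and satisfies the desired trace identity. The trace identity is verified by writing any $\tau\in T_\omega(A^\omega\cap A')$ as a limit of canonical traces in view of Proposition \ref{prop:tracesF(A)} and reducing to the identity $\tau_a(q_i^{(m)})=\tfrac{1}{n}\tau(a)$ on elements of $A_+$, with the reindexation absorbing the deviation stemming from elements of $A^\omega\cap A'\setminus A$.

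For the backward direction, assume $\alpha^\omega$ has local equivariant uniform property Gamma with respect to bounded traces. Fix $n\in\N$ and apply the local version with $S=\emptyset$ to obtain pairwise orthogonal projections $P_1,\dots,P_n\in ((A^\omega\cap A')^{\omega,\rm b})^{(\alpha^\omega)^\omega}$ satisfying $\sigma(P_i)=\tfrac{1}{n}$ for every $\sigma\in T_\omega(A^\omega\cap A')$. Lift each $P_i$ to a bounded sequence $(P_i^{(m)})_{m\in\N}$ in $A^\omega\cap A'$, and then further to a double sequence $(P_i^{(m,\ell)})_{m,\ell}$ in $A$. Using Kirchberg's $\e$-test, together with Proposition \ref{prop:fixed_point_surjectivity} to handle equivariance and the commutation with $A$, extract a single sequence $(P_i^{(m,\ell(m))})_m$ representing a projection $p_i\in (A^\omega\cap A')^{\alpha^\omega}$. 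The trace condition required by equivariant Gamma then follows because each canonical trace $\tau_a$ (with $\tau\in\tilde{T}_\omega(A)$, $a\in A_+$, $\tau(a)=1$) extends through the constant sequence to a limit trace in $T_\omega(A^\omega\cap A')$, so the identity $\tfrac{1}{n}=\sigma(P_i)$ specializes along the diagonal to $\tau_a(p_i)=\tfrac{1}{n}$, hence to $\tau_a(p_i)=\tfrac{1}{n}\tau(a)$ for general $a$ by rescaling.

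The main obstacle lies in the forward direction: the trace identity demanded by local equivariant Gamma concerns arbitrary $a\in S\subseteq (A^\omega\cap A')^{\omega,\rm b}$, whereas equivariant Gamma provides the identity only on elements of $A_+$. Bridging this gap hinges decisively on Proposition \ref{prop:tracesF(A)}, which uses the stable rank one hypothesis through a hereditary subalgebra trick and allows every trace on $A^\omega\cap A'$ to be expressed through data coming from $A$; combined with a simultaneous reindexation, this makes the trace identity propagate correctly through the diagonal extraction.
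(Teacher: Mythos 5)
Your proposal follows essentially the same route as the paper: both implications are handled by descending to representing (double) sequences in $A$ and reindexing via Kirchberg's $\e$-test, and you correctly locate the role of the hypotheses --- the direction from equivariant Gamma for $\alpha$ to local equivariant Gamma for $\alpha^\omega$ is the one that genuinely needs Proposition \ref{prop:tracesF(A)} (hence nuclearity, stable rank one and uniform property Gamma for $A$) in order to upgrade the trace identity from the canonical traces $\tau_a$ to all of $T(A^\omega\cap A')$ and thence to all limit traces on $(A^\omega\cap A')^{\omega,\rm b}$. Two inaccuracies should be repaired, though neither derails the argument. First, in the backward direction, applying Definition \ref{definition:localequivariantGamma} with $S=\emptyset$ yields no tracial information at all, since the condition $\sigma(aP_i)=\frac1n\sigma(a)$ is quantified over $a\in S$; you must take $S$ containing the unit of $A^\omega\cap A'$ to obtain $\sigma(P_i)=\frac1n$, and you still need a device such as Lemma \ref{lemma:compact_generator_no_cutdowns} to convert the evaluation of the canonical traces $\tau_a$ on elements of $A^\omega\cap A'$ into double-limit conditions on sequences in $A$ that the $\e$-test can digest. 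Second, Proposition \ref{prop:fixed_point_surjectivity} (strong local semisplitness of $F_\omega(A)\to A^\omega\cap A'$) is not the tool that makes the Gamma-witnessing projections commute with a prescribed countable subset of $A^\omega\cap A'$, nor the one that handles equivariance of the diagonal element; in both places the correct mechanism is exactly the reindexation of representing central sequences that you already invoke alongside it, so the citation is superfluous rather than load-bearing.
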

\begin{proof}
In order to increase readability in this proof, let us specify another free ultrafilter $\kappa$ on $\mathbb N$ (which may or may not be equal to $\omega$).

We shall show the ``if'' part first, which actually holds for arbitrary separable simple \cstar-algebras with $Q\tilde{T}_2(A)=\tilde{T}(A)$ and $T^+(A)\neq\emptyset$.
Let $k\geq 2$.
Assuming $\alpha^\omega$ has local equivariant property Gamma w.r.t.\ bounded traces, we can find pairwise orthogonal projections $p_1, \hdots, p_k \in \big( (A^\omega\cap A')^{\kappa,\rm b}  \big)^{(\alpha^\omega)^\kappa}$ such that
\[
\tau(ap_j) = \frac{1}{k} \tau(a) \quad \text{ for } j=1,\dots,k,\ a \in A,\ \tau \in T_\kappa( A^\omega\cap A' ).
\]
For each $j=1,\hdots, k$, let $p_j$ be represented by a sequence of positive contractions $(p_{j,n})_{n \in \N}$ in $A^\omega\cap A'$.
Let in turn each element $p_{j,n}$ be represented by a central sequence $(x_{j,n,\ell})_{\ell \in \N}$ of positive contractions in $A$.
Traces in $T_\kappa(A^\omega\cap A')$ in particular include limit traces associated to sequences of canonical traces.
Let $C\subset\mathcal{P}(A)_+\setminus\{0\}$ be a countable dense subset.
Let $K\subset T^+(A)$ be a compact generator.
By the conclusion of Lemma \ref{lemma:compact_generator_no_cutdowns}, it follows for all $a\in C$ and all sequences $(\theta_\ell)_{\ell\in \N}$ in $K$ that, if $\tau$ is the limit trace on $A_\omega$ induced by $(\theta_\ell)_{\ell\in \N}$ and $\tau_a$ is the induced bounded trace on $A^\omega\cap A'$ that we view in a trivial way as a multiple of a (constant) limit trace on $(A^\omega\cap A')^{\kappa,\rm b}$, then
\[
0 =  \lim_{n\to\kappa} \|p_{j,n}-p_{j,n}^2\|_{1,\tau_a}
=  \lim_{n\to\kappa} \tau(a |p_{j,n}-p_{j,n}^2|)
\stackrel{\textup{Lemma } \ref{lemma:compact_generator_no_cutdowns}}{=} \lim_{n\to\kappa} \lim_{\ell\to\omega} \theta_\ell(a |x_{j,n,\ell}-x_{j,n,\ell}^2|).
\]
Since the sequence $(\theta_\ell)_{\ell \in \N}$ in $K$ was arbitrary, we may rewrite this as
\[
0 = \lim_{n\to\kappa} \lim_{\ell\to\omega} \max_{\theta\in K}\ \theta(a |x_{j,n,\ell}-x_{j,n,\ell}^2|).
\]
We may argue in a completely analogous fashion to see that
\[
0 = \lim_{n\to\kappa} \lim_{\ell\to\omega} \max_{\theta\in K}\ \theta(a |x_{j,n,\ell}-\alpha_g(x_{j,n,\ell})|),\quad g\in G,
\]
as well as
\[
0 = \lim_{n\to\kappa} \lim_{\ell\to\omega} \max_{\theta\in K}\ \big| \theta(a x_{j,n,\ell})-\frac1k\theta(a) \big| = \lim_{n\to\kappa} \lim_{\ell\to\omega} \max_{\theta\in K}\ \theta(a x_{j,n,\ell}x_{i,n,\ell})
\]
for all $i,j=1,\dots, k$ with $i\neq j$.
Lastly, we have by definition that $(x_{j,n,\ell})_{\ell \in \N}$ is a central sequence as $\ell\to\omega$.
Appealing to Kirchberg's $\varepsilon$-test, we can find central sequences of positive contractions $e^{(j)}_\ell$ in $A$ for $j=1,\dots,k$ satisfying for all $a\in C$ the properties
\[
0 = \lim_{\ell\to\omega} \max_{\theta\in K}\ \big| \theta(a e_{\ell}^{(j)})-\frac1k\theta(a) \big|  = \lim_{\ell\to\omega} \max_{\theta\in K}\ \theta(a |e_\ell^{(j)}-e_\ell^{(j)2}|)
\]
and
\[
0 = \lim_{\ell\to\omega} \max_{\theta\in K}\ \theta(a e_\ell^{(j)} e_{\ell}^{(i)}) = \lim_{\ell\to\omega} \max_{\theta\in K}\ \theta(a |e_\ell^{(j)}-\alpha_g(e_\ell^{(j)})|),\quad g\in G \text{ and } i\neq j.
\]
We consider the resulting elements $e_j\in A^\omega\cap A'$ represented by $(e_\ell^{(j)})_{\ell\in \N}$.
Given that $C$ was dense in $A_+$, we may conclude that they are pairwise orthogonal projections belonging to $(A^\omega\cap A')^{\alpha^\omega}$ satisfying $\tau_a(e_j)=\frac1k\tau(a)$ for all $\tau\in\tilde{T}_\omega(A)$ and $a\in C$ with $\tau(a)<\infty$.
In conclusion, this shows that $\alpha$ has equivariant uniform property Gamma.

For the ``only if'' part, suppose that $\alpha$ has equivariant property Gamma.
Given $k\geq 2$, there exist pairwise orthogonal projections $p_1, \hdots, p_k \in (A^\omega\cap A')^{\alpha^\omega}$ such that for all $a \in A_+$ and $\tilde{\tau} \in \tilde{T}_\omega(A)$ with $\tilde{\tau}(a) < \infty$
\[
\tilde{\tau}_a(p_j) = \frac{1}{k} \tilde{\tau}(a) \text{ for } j=1, \hdots, k.
\]
As above, choose a compact generator $K\subset T^+(A)$.
If we represent each element $p_j$ by a central sequence of positive contractions $(p_{j,n})_{n\in \N}$ in $A$, then we can argue as before and see that for all $a\in \mathcal{P}(A)_+\setminus\{0\}$, $g\in G$ and $i\neq j$, one has the limit properties
\[
0 = \lim_{n\to\omega} \max_{\theta\in K}\ \big| \theta(a p_{j,n})-\frac1k\theta(a) \big| = \lim_{n\to\omega} \max_{\theta\in K}\ \theta(a |p_{j,n}-\alpha_g(p_{j,n})|)
\]
and
\[
0 = \lim_{n\to\omega} \max_{\theta\in K}\ \theta(a p_{j,n} p_{i,n})  = \lim_{n\to\omega} \max_{\theta\in K}\ \theta(a |p_{j,n}-p_{j,n}^2|).
\]
Now take a countable subset $S \subset (A^\omega\cap A')^{\kappa,\rm b}$ whose closure would represent a separable subset as in Definition \ref{definition:localequivariantGamma}.
Without loss of generality, let us assume $S$ consists of positive elements.
Choose a countable subset $S_0\subset (A^\omega\cap A')_+$ such that every element of $S$ is represented by a bounded $S_0$-valued sequence.
Next, choose an increasing sequence of finite sets $F_n\subset \mathcal{P}(A)_+\setminus\{0\}$ such that their union is dense in $A_+$ and every element in $S_0$ has a representing sequence in $\prod_{n\in\N} F_n$.
Appealing to the above stated properties of the sequences $(p_{j,n})_{n\in \N}$ for $j=1,\dots,n$, we may find an increasing sequence of natural numbers $\ell\mapsto n_\ell$ such that the resulting subsequences satisfy
\begin{equation}\label{eq:subsequence_invariance}
0 = \lim_{\ell\to\infty} \max_{a\in F_\ell} \|[a,p_{j,n_\ell}]\|  =\lim_{\ell\to\infty} \max_{a\in F_\ell} \max_{\theta\in K}\ \theta(a |p_{j,n_\ell}-\alpha_g(p_{j,n_\ell})|) ,\end{equation}
\begin{equation}\label{eq:subsequence_orthogonal_projections} 0 = \lim_{\ell\to\infty} \max_{a\in F_\ell} \max_{\theta\in K}\ \theta(a p_{j,n_\ell} p_{i,n_\ell}) =\lim_{\ell\to\infty} \max_{a\in F_\ell} \max_{\theta\in K}\ \theta(a |p_{j,n_\ell}-p_{j,n_\ell}^2|),
\end{equation}
and
\begin{equation}\label{eq:subsequence_tracial_division}0  = \lim_{\ell\to\infty} \max_{a,b\in F_\ell} \max_{\theta\in K}\ \big| \theta(a b p_{j,n_\ell})-\frac1k\theta(ab) \big|
\end{equation}
for all $i,j=1,\dots,k$ with $i\neq j$.
By the choice of the sets $F_\ell$, we can see that $(p_{j,n_\ell})_{\ell\in \N}$ defines a central sequence in $A$, and its induced element $e_j\in A^\omega\cap A'$ commutes with elements in $S_0$.
We keep in mind the conclusion of Lemma \ref{lemma:compact_generator_no_cutdowns}.
Then conditions \eqref{eq:subsequence_invariance} and \eqref{eq:subsequence_orthogonal_projections} imply that $e_1,\dots,e_k$ are pairwise orthogonal projections in $(A^\omega\cap A')^{\alpha^\omega}$.
Condition \eqref{eq:subsequence_tracial_division} implies that for all $j=1,\dots,k$, $\tau\in\tilde{T}_\omega(A)$, every $a\in A_+$ with $\tau(a)=1$, and every $b\in S_0$, we have
\[
\tau_a(be_j)=\tau(abe_j)=\frac1k\tau(ab)=\frac1k\tau_a(b).
\]
By Proposition \ref{prop:tracesF(A)}, the weak-$*$-closed convex hull of such tracial states $\tau_a$ yields the whole tracial state space of $A^\omega\cap A'$.
In other words, we may conclude
\[
\tau(be_j)=\frac1k\tau(b) \quad\text{for all } j=1,\dots,k,\ b\in S_0 \text{ and } \tau\in T(A^\omega\cap A'). 
\]
We may view $e_j$ as constant elements inside $(A^\omega\cap A')^{\kappa,\rm b}$.
Since every element in $S$ was represented by a sequence in $S_0$, we may conclude that the elements $e_1,\dots,e_k$ satisfy the required property from Definition \ref{definition:localequivariantGamma} applied to the action $\alpha^\omega: G\acts A^\omega\cap A'$.
We conclude that $\alpha^\omega$ has local equivariant property Gamma w.r.t.\ bounded traces.
\end{proof}

\section{Dynamical complemented partitions of unity}

This section contains the most involved technical arguments of the article, namely the proof that local equivariant property Gamma implies the existence of a dynamical version of complemented partitions of unity \cite[Definition 3.1]{CETWW21}, or dynamical CPoU for short.
In the case where the induced action on the tracial state space has the property that all orbits are finite with uniformly bounded cardinality, a different iteration of dynamical CPoU was proved in \cite[Theorem 4.3]{GardellaHirshbergVaccaro}.
However, we note that the general statement we prove is a weaker and more intricate version compared to earlier versions, but will nevertheless be sufficient to deduce the tracial local-to-global principle.

The starting point for the approach in this section is the following weaker version of CPoU shown in \cite[Lemma 3.6]{CETWW21} for nuclear \cstar-algebras, which turns out to hold automatically with the aid of the theory of tracially complete \cstar-algebras \cite{CCEGSTW}.

\begin{prop} \label{prop:weak-cpou}
Let $A$ be a $\sigma$-unital \cstar-algebra with $T(A)$ non-empty and compact.
Then for every $\|\cdot\|_{2,T_\omega(A)}$-separable subset $S \subset A^{\omega,\rm b}$, every $k\in \N$, every family $a_1, \hdots, a_k \in A_+$ and every 
\[
\delta > \sup_{\tau \in T(A)} \min_{i=1, \hdots,  k} \tau (a_i),
\]
there exist $e_1, \hdots, e_k \in (A^{\omega,\rm b} \cap S')_+^1$ such that for all $\tau \in T_\omega(A)$
\begin{itemize}[itemsep=1ex,topsep=1ex]
\item $\tau(\sum_{i=1}^k e_i)= 1$,
\item $\tau(a_ie_i) \leq \delta\tau(e_i)$ for $i=1, \hdots, k$. 
\end{itemize}
\end{prop}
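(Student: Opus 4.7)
The strategy would be to deduce the statement from the corresponding structural results about tracially complete \cstar-algebras established in \cite{CCEGSTW}, bypassing the nuclearity hypothesis present in \cite[Lemma 3.6]{CETWW21}. Under the standing assumption that $T(A)$ is compact, the uniform tracial completion $\overbar{A}$ of $A$ with respect to $\|\cdot\|_{2,T(A)}$ is a tracially complete \cstar-algebra in the sense of \cite{CCEGSTW} whose trace simplex agrees with $T(A)$. The key input to import would be that a weak form of CPoU of exactly this shape holds automatically in such an object, with no nuclearity assumption required.

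Concretely, I would first invoke this abstract weak CPoU for $\overbar{A}$ applied to the same data $a_1,\dots,a_k$ and the given $\delta$, to produce, for each tolerance $\e>0$, positive contractions $f_1^{(\e)},\dots,f_k^{(\e)}\in\overbar{A}$ satisfying $\tau(\sum_i f_i^{(\e)})\geq 1-\e$ and $\tau(a_if_i^{(\e)})\leq \delta\tau(f_i^{(\e)})$ uniformly over $\tau\in T(A)$; here the strict inequality $\delta>\sup_{\tau\in T(A)}\min_i\tau(a_i)$ provides the slack needed to locate such approximate witnesses. Each $f_i^{(\e)}$ in turn can be approximated in $\|\cdot\|_{2,T(A)}$ by an element of $A$. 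Threading a sequence $\e_n\to 0$ through this construction yields bounded sequences $(x_{i,n})_{n\in\N}$ of positive contractions in $A$ whose classes $e_i\in A^{\omega,\rm b}$ are positive contractions satisfying the exact equality $\tau(\sum_i e_i)=1$ and the inequality $\tau(a_ie_i)\leq \delta\tau(e_i)$ for every limit trace $\tau\in T_\omega(A)$.

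To achieve commutation with the $\|\cdot\|_{2,T_\omega(A)}$-separable subset $S\subseteq A^{\omega,\rm b}$, I would perform a standard reindexing: choose a countable $\|\cdot\|_{2,T_\omega(A)}$-dense subset of $S$, represent its elements by bounded sequences in $A$, and combine these with the representing sequences of the $e_i$ using Kirchberg's $\e$-test in the spirit of Proposition \ref{prop:fixed_point_surjectivity}. The test is set up to force the relevant tracial 2-norms of commutators to vanish along $\omega$ while retaining both the positive-contraction constraint and the tracial identities. The resulting representatives yield elements $e_1,\dots,e_k\in(A^{\omega,\rm b}\cap S')^1_+$ with the desired properties.

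The main obstacle is to pin down the precise form of weak CPoU for tracially complete \cstar-algebras available in \cite{CCEGSTW} and to verify it applies to $\overbar{A}$ with the exact threshold $\delta$; a secondary technical point is to maintain positivity, norm bounds, and the boundary equality $\tau(\sum_i e_i)=1$ (as opposed to a mere inequality) when passing from 2-norm approximations in $\overbar{A}$ back into $A^{\omega,\rm b}$. The reindexing to accommodate $S\subseteq A^{\omega,\rm b}$ is routine in spirit but must be written out carefully, since $S$ sits in the ultrapower rather than in $A$ itself.
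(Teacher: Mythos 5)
Your overall strategy --- importing the weak CPoU statement for tracially complete \cstar-algebras from \cite{CCEGSTW} via the uniform tracial completion $\overline{A}^{T(A)}$ --- is exactly the route the paper takes, and the approximation/reindexing issues you flag at the end are manageable; in fact the paper sidesteps them entirely by applying the ultraproduct formulation \cite[Theorem 6.15]{CCEGSTW} directly, which already produces elements of $A^{\omega,\rm b}\cap S'$ with the exact tracial identities, using that $\big(\overline{A}^{T(A)}\big)^\omega$ is canonically isomorphic to $A^{\omega,\rm b}$.

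However, there is a genuine gap that your plan does not address, and it is essentially all of the technical content of the paper's proof: the result you want to import requires the tracially complete \cstar-algebra to be $\|\cdot\|_{2,T(A)}$-separable, whereas $A$ here is only assumed $\sigma$-unital. One must therefore first replace $A$ by a separable \cstar-subalgebra $A_1\subseteq A$ containing $a_1,\dots,a_k$ and large enough that every element of $S$ is represented by a bounded sequence in $A_1$. The danger is that the restriction map $T(A)\to T(A_1)$ need not be surjective, so the quantity $\sup_{\tau\in T(A_1)}\min_i\tau(a_i)$ could a priori exceed $\delta$, destroying the very hypothesis your argument relies on. The paper handles this with a compactness argument: writing $\delta_0$ for the original supremum, for every $\eta>0$ there exist a finite set $F_\eta\ssubset A$ and $\varepsilon_\eta>0$ such that any state $\rho$ on $A$ with $|\rho(x^*x)-\rho(xx^*)|<\varepsilon_\eta$ for all $x\in F_\eta$ satisfies $\min_i\rho(a_i)<\delta_0+\eta$; adjoining all the sets $F_{1/n}$ to $A_1$ then forces every tracial state on $A_1$ (extended arbitrarily to a state on $A$) to satisfy $\min_i\tau(a_i)\leq\delta_0<\delta$. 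Without this reduction, or some substitute for it, the appeal to \cite{CCEGSTW} does not go through.
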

\begin{proof}
Let $S$, $k$, $a_1,\hdots, a_k$ and $\delta$ be chosen as in the assumption.
Set 
\[
\delta_0:=\sup_{\tau \in T(A)} \min_{i=1, \hdots,  k} \tau (a_i) < \delta.
\]
Since $S$ is $\|\cdot\|_{2,T_\omega(A)}$-separable, it is first of all clear that one may find a non-degenerate separable \cstar-subalgebra $A_0\subseteq A$ containing the tuple $a_1,\dots, a_k$ such that every element of $S$ can be represented by a bounded sequence in $A_0$.
As every tracial state on $A$ restricts to one on $A_0$, the tracial state space of $A_0$ is still non-empty and compact, and furthermore
\[
\sup_{\tau \in T(A_0)} \min_{i=1, \hdots,  k} \tau (a_i) \geq \delta_0.
\]
Let $\eta>0$.
We claim that there exists a finite set $F_\eta\ssubset A$ and $\varepsilon_\eta>0$ such that if $\rho$ is any state on $A$ with 
\[
\max_{x\in F_\eta} |\rho(x^*x)-\rho(xx^*)|<\varepsilon_\eta,
\]
then $\min_{i=1,\hdots,k} \rho(a_i)<\delta_0+\eta$.
If we suppose for a moment that this were false, then it follows that for every finite set $F\ssubset A$ and every $\varepsilon>0$ there exists a state $\rho_{(F,\varepsilon)}$ on $A$ with
\[
\max_{x\in F} |\rho_{(F,\varepsilon)}(x^*x)-\rho_{(F,\varepsilon)}(xx^*)|<\varepsilon \quad\text{and}\quad \min_{i=1,\hdots,k} \rho_{(F,\varepsilon)}(a_i)\geq \delta_0+\eta.
\]
We can view $\rho_{(F,\varepsilon)}$ as a net of states by equipping the set of pairs $(F,\varepsilon)$ with the obvious order.
By the Banach--Anaoglu theorem, there exists a subset $(\rho_\lambda)_{\lambda\in\Lambda}$ that weak-$*$-converges to a positive functional $\rho'$ with norm at most one on $A$.
By the properties of the net $\rho_{(F,\varepsilon)}$, it is clear that $\rho'$ is tracial.
Hence $\min_{i=1,\hdots,k} \rho'(a_i)\leq\delta_0$, while at the same time
\[
\min_{i=1,\hdots,k} \rho'(a_i) = \lim_{(F,\varepsilon)} \min_{i=1,\hdots,k} \rho_{(F,\varepsilon)}(a_i) \geq \delta_0+\eta,
\]
which is a contradiction.

Using this intermediate claim, we choose for each $n\geq 1$ a finite set $F_n\ssubset A$ and $\varepsilon_n>0$ satisfying the above conclusion for $\eta=\frac1n$.
Let $A_1\subseteq A$ be the \cstar-algebra generated by $A_0$ and all the finite sets $F_n$, which is clearly still separable.
Since $A_1$ contains all the finite sets $F_n$, it follows that every tracial state $\tau$ on $A_1$ must satisfy
\[
\min_{i=1,\hdots,k} \tau(a_i) \leq \delta_0 + \frac1n,\quad n\geq 1,
\]
which leads to
\[
\sup_{\tau \in T(A_1)} \min_{i=1, \hdots,  k} \tau (a_i) = \delta_0 < \delta.
\]
By all the properties arranged for the subalgebra $A_1\subseteq A$ so far, it is clear for proving our main claim that we may swap $A$ for the subalgebra $A_1$.
In other words, we may assume without loss of generality that $A$ is separable.

By \cite[Definition 3.19, Proposition 3.23]{CCEGSTW}, the tracial completion $\overline{A}^{T(A)}$ of $A$ yields a factorial tracially complete \cstar-algebra.
Note that as per the ultraproduct construction of tracially complete \cstar-algebras in \cite{CCEGSTW}, the object $\big(\overline{A}^{T(A)})^\omega$ in that sense becomes canonically isomorphic to the \cstar-algebra $A^{\omega,\rm b}$ as considered in Definition~\ref{definition:tracial_ultrapowers}.
Because $A$ is separable, $\overline{A}^{T(A)}$ is $\|\cdot\|_{2,T(A)}$-separable.
Thus we may directly apply \cite[Theorem 6.15]{CCEGSTW} (inserting the unit in place of the projection $q$ appearing there) and find the elements $e_1,\dots,e_k\in (A^{\omega,\rm b} \cap S')_+^1$ with the desired properties.
\end{proof}

The main achievement of this section is the following technical lemma:

\begin{lemma} \label{lemma:equivariant_CPOU}
Given $\e >0$ and $t \in (0,1)$, there exists a universal constant $\eta=\eta(\e,t) >0$ such that the following holds:
Let $A$ be a $\sigma$-unital \cstar-algebra with $T(A)$ non-empty and compact.
Let $G$ be a countable discrete group, and let $\alpha: G \acts A$ be an action with local equivariant property Gamma w.r.t.\ bounded traces.
Suppose that $F,H \ssubset G$ are finite subsets such that $|gH \Delta H| < \eta|H|$ for all $g\in F$.
Then for every $\|\cdot\|_{2,T_\omega(A)}$-separable subset $S \subset A^{\omega,\rm b}$, every family $a_1, \hdots, a_k \in (A^{\omega,\rm b})_+$, and every constant $\delta>0$ with
\begin{equation}\label{eq:conditions_a_ultrapower}
\frac{\delta}{|H|} > \sup_{\tau \in T_\omega(A)}\min_{i = 1, \hdots, k} \tau(a_i),
\end{equation}
there exist pairwise orthogonal projections $p_1, \hdots, p_k \in A^{\omega,\rm b} \cap S'$ such that for all $\tau \in T_\omega(A)$ one has
\begin{align}
\tau(p_1 + \hdots + p_k) &> t,\\
\tau(a_ip_i) &\leq \delta \tau(p_i) \text { for } i=1, \hdots, k, \text{ and}\label{eq:delta_inequality_theorem}\\
\textstyle \max_{g \in F} \sum_{i=1}^k\|\alpha^\omega_g(p_i)-p_i\|_{2,\tau}^2 &<\e.\label{eq:sum_invariance_theorem}
\end{align}
\end{lemma}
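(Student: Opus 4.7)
\medskip

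\noindent\emph{Proof plan.}
The plan is to combine three ingredients: the weak CPoU of Proposition~\ref{prop:weak-cpou}, local equivariant property Gamma, and a Følner averaging argument exploiting the hypothesis $|gH \triangle H| < \eta|H|$. The factor $|H|$ appearing in the condition $\delta/|H| > \sup_\tau \min_i \tau(a_i)$ encodes that the tracial ``budget'' for the inequality must be split over $|H|$ translates before being reassembled into a single projection, so each individual translate is only allowed a tracial weight $\delta/|H|$.

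First, I would enlarge $S$ to a $\|\cdot\|_{2,T_\omega(A)}$-separable subset containing the $a_i$ and, by a standard reindexation argument, reduce to applying Proposition~\ref{prop:weak-cpou} directly to the (now $\sigma$-unital, compact-trace) algebra $A$ with the $a_i$ as if they lay in $A$. This yields positive contractions $e_1, \ldots, e_k \in A^{\omega,\rm b} \cap S'$ with
\[
\textstyle\sum_{i=1}^k \tau(e_i) = 1, \qquad \tau(a_i e_i) \leq \tfrac{\delta}{|H|}\tau(e_i) \quad (\tau \in T_\omega(A)).
\]
Next, I would apply local equivariant property Gamma (with $n=|H|$) to the further enlarged separable set $S \cup \{a_i, e_i\}$ to produce pairwise orthogonal, $\alpha^\omega$-invariant projections $(r_h)_{h \in H}$ commuting with everything, each with $\tau(r_h \cdot) = \tfrac{1}{|H|}\tau(\cdot)$.

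The technical heart is then to promote the $e_i$'s to projections $p_1, \ldots, p_k$ of the desired form. In each invariant corner $r_h A^{\omega,\rm b} r_h$, I would build pairwise orthogonal projections $q_{i,h} \leq r_h$ with $\tau(q_{i,h}) \approx \tau(e_i r_h) = \tau(e_i)/|H|$ and $\tau(a_i q_{i,h}) \leq \tfrac{\delta}{|H|}\tau(q_{i,h})$; this uses spectral cutoffs of $e_i r_h$ inside the corner, Lemma~\ref{lemma:Powers-Stormer} to control the tracial error between $e_i r_h$ and its spectral projection, and a greedy orthogonalization across $i$. Crucially, to prepare for the Følner averaging step, the $q_{i,h}$ must be chosen \emph{covariantly}, i.e.\ there is an implementation $q_{i,h}$ depending on $h$ through $\alpha_h^\omega$ applied to a reference choice $q_{i,e}$; since local equivariant property Gamma only gives \emph{fixed} projections $r_h$ rather than a Rokhlin tower, this covariance has to be engineered by hand (via a further reindexation and iterated application of the Gamma property, arranging the $q_{i,h}$'s as a common central lift of a covariant family). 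One then sets $p_i := \sum_{h \in H} q_{i,h}$; pairwise orthogonality and the tracial sum $\tau(\sum_i p_i) > t$ follow from the CPoU data, and the scaled inequality $\tau(a_i p_i) \leq \delta \tau(p_i)$ comes from summing the corner estimates, exchanging the factor $1/|H|$ from each corner for the factor $|H|$ in $\delta$.

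Finally, the almost-invariance $\sum_i \|\alpha_g^\omega(p_i) - p_i\|_{2,\tau}^2 < \varepsilon$ for $g \in F$ would follow from the Følner hypothesis: by the covariant choice of $q_{i,h}$, $\alpha_g^\omega(p_i) - p_i$ is supported on the symmetric difference $gH \triangle H$, so $\sum_i \|\alpha_g(p_i) - p_i\|_{2,\tau}^2 \lesssim |gH \triangle H|/|H| < \eta$, and taking $\eta := \eta(\varepsilon, t)$ small enough (with an explicit dependence like $\eta = \varepsilon/(C |F|)$ absorbed into the universal constant since $F$ is user-chosen and the bound must be uniform in $F$) closes the estimate. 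The main obstacle is unquestionably the covariant construction of the $q_{i,h}$'s in the third step, since local equivariant Gamma by itself does not provide a Rokhlin-type tower and the covariant structure must be forced through iterated reindexation while simultaneously maintaining orthogonality, correct tracial mass, and the tracial inequality against $a_i$.
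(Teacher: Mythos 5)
There is a genuine gap at the heart of your plan: the projections produced by local equivariant property Gamma are \emph{fixed} by $\alpha^\omega$, not permuted by it, so they cannot serve as the levels of a Rokhlin-type tower indexed by $H$. Concretely, you ask for pairwise orthogonal invariant projections $(r_h)_{h\in H}$ together with a covariant family $q_{i,h}=\alpha^\omega_h(q_{i,e})$ satisfying $q_{i,h}\leq r_h$; but since each $r_h$ is $\alpha^\omega$-invariant, covariance forces $q_{i,h}=\alpha^\omega_h(q_{i,e})\leq \alpha^\omega_h(r_e)=r_e$, contradicting $q_{i,h}\leq r_h$ for $h\neq e$. More fundamentally, a family $\{\alpha^\omega_h(q_{i,e})\}_{h\in H}$ that is pairwise orthogonal in $h$ is a Rokhlin-type structure that equivariant property Gamma simply does not supply: the trivial action on a $\mathcal Z$-stable algebra has equivariant Gamma, yet every such covariant family is constant in $h$ and hence can never be orthogonal over $H$. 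Consequently the key estimate you rely on --- that $\alpha^\omega_g(p_i)-p_i$ is ``supported on $gH\triangle H$'' --- has no meaning here, because the action does not translate the index $h$. No amount of reindexation fixes this, since the obstruction is not about lifting but about the absence of any freeness hypothesis on $\alpha$.

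The F{\o}lner set has to enter differently: one replaces $a_i$ by the averages $|H|^{-1}\sum_{g\in H}\alpha_{g^{-1}}(a_i)$ (which is why the hypothesis carries the factor $\delta/|H|$), applies weak CPoU to those, and then averages the resulting contractions $e_i'$ over $H$ to get $e_i=|H|^{-1}\sum_{g\in H}\alpha^\omega_g(e_i')$; the F{\o}lner condition then gives $\sum_i\|\alpha^\omega_g(e_i)-e_i\|_{1,\tau}<\eta$ directly, with no tower needed (Lemma \ref{lemma:precursor_1}). The remaining work is to convert these contractions into pairwise orthogonal projections \emph{without destroying} this approximate invariance --- this requires the refined projectionization of Lemma \ref{lemma:eq_tracial_proj}, which controls $\|\alpha^\omega_g(p)-p\|_{2,\tau}$ in terms of $\|\alpha^\omega_g(b)^{1/2}-b^{1/2}\|_{2,\tau}$ via a Radon-measure argument, combined with Lemma \ref{lemma:Powers-Stormer}. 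You also do not address that a single orthogonalization pass using Gamma only yields total trace $1/k$; the paper must iterate the whole construction under $1-\sum_i p_i$ roughly $k\lceil t/(1-t)\rceil$ times to push the trace past $t$, and the invariance error accumulates linearly in the number of iterations. This is precisely why $t<1$ is required and why $\eta$ must depend on $t$ (the paper takes $4\lceil t/(1-t)\rceil\sqrt{\eta}<\e$); your proposed $\eta=\e/(C|F|)$ cannot be correct as stated, since $\eta$ is required to be universal in $(\e,t)$ and independent of $F$.
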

\begin{remark}\label{remark:reduction_constant_elements}
A standard argument shows that the statement in Lemma \ref{lemma:equivariant_CPOU} is equivalent to the existence of a universal constant $\eta(\e,t) > 0$ satisfying the following statement (using approximations instead of the uniform bounded tracial ultrapower):

\emph{If $\alpha: G \acts A$ is an action and $F,H \ssubset G$ are all given as in Lemma \ref{lemma:equivariant_CPOU}, then for every finite subset $S \ssubset A$, every $\xi >0$, every family $a_1, \hdots, a_k \in A_+$, and every $\delta >0$ with
\begin{equation}\label{eq:conditions_a_approximate}
\frac{\delta}{|H|} > \sup_{\tau \in T(A)}\min_{i = 1, \hdots, k} \tau(a_i),
\end{equation}
there exist pairwise orthogonal contractions $e_1, \hdots, e_k \in A_+$ such that 
\begin{align*}
\|[e_i,x]\|_{2,u} &< \xi \text{ for } x \in S, i=1, \hdots, k\\
\|e_i-e_i^2\|_{2,u} &< \xi \text{ for } i =1, \hdots, k,\\
\tau(e_1 + \hdots + e_k) &> t - \xi \text{ for } \tau \in T(A)\\
\tau(a_ie_i) &< \delta \tau(e_i) + \xi \text{ for } \tau \in T(A), i=1, \hdots, k, \text{ and}\\
\textstyle \max_{g \in F} \sum_{i=1}^k \|\alpha_g(e_i) - e_i\|_{2,\tau} &< \e + \xi \text{ for } \tau \in T(A).
\end{align*}}

In particular, this means that it suffices to prove Lemma \ref{lemma:equivariant_CPOU} for positive elements $a_1, \hdots, a_k$ taken in $A$ instead of $A^{\omega,\rm b}$. In this case, \eqref{eq:conditions_a_ultrapower} and \eqref{eq:conditions_a_approximate} are equivalent. 
\end{remark}

The proof of Lemma \ref{lemma:equivariant_CPOU} is an adapted version of the proof in the non-dynamical setting (cf.\ \cite[Section 3]{CETWW21}), but also incorporates new ideas related to the dynamical structure.
Before we delve into the details, we shall give an overview of the strategy.
The construction of the pairwise orthogonal projections $p_1, \hdots, p_k$ in the statement of Lemma \ref{lemma:equivariant_CPOU} is done in three steps:

\begin{enumerate}
\item Instead of producing pairwise orthogonal projections $p_1, \hdots, p_k$, we start by producing (not yet pairwise orthogonal) positive contractions $e_1, \hdots, e_k\in A^{\omega,\rm b} \cap S'$ that satisfy
\[
\tau(e_1 + \hdots + e_k) = 1,\ \tau(a_ie_i) \leq \delta \tau(e_i) \quad \text{for } i=1, \hdots, k, \, \tau \in T_\omega(A),
\]
and that are approximately invariant under $\alpha^\omega$ in the right sense. This is done in Lemma \ref{lemma:precursor_1} and is the only part of the proof that makes use of the approximate F{\o}lner property that appears in the assumption of the lemma.
\item Next, we use equivariant property Gamma to turn these contractions into orthogonal projections $p_1', \hdots, p'_k \in A^{\omega,\rm b} \cap S'$.
As a consequence of this procedure we get that 
\[
\tau(p_1' + \hdots + p'_k) =\frac{1}{k} \quad \text{ for } \tau \in T_\omega(A),
\]
but they still satisfy \eqref{eq:delta_inequality_theorem} and are still approximately invariant in the right sense.
This is done in Lemma \ref{lemma:precursor_2}.
\item In order to enlarge the trace of the sum of the projections, we repeat the above steps underneath the projection $1_{A^\omega} - \sum_{i=1}^k p'_i$.\footnote{For this one actually needs a somewhat stronger version of the second step, see Lemma \ref{lemma:precursor_final}.}
We continue this procedure inductively until we end up with orthogonal projections $p_1, \hdots, p_k$ whose sum exceeds $t$ in trace and that still satisfy \eqref{eq:delta_inequality_theorem}.
If everything is done carefully from the start and $\eta>0$ is chosen correctly, we can control the error in the invariance of the projections and make sure they satisfy \eqref{eq:sum_invariance_theorem} in the end.
(We note, informally, that this error grows with the number of times this procedure is repeated, which is the ultimate reason why we cannot simply work with $t=1$ in the statement.)
\end{enumerate}

We shall now implement the above strategy.
Combining the contractions arising from Proposition \ref{prop:weak-cpou} with an averaging argument over suitable F\o lner sets allows us to carry out the first step:

\begin{lemma} \label{lemma:precursor_1}
Let $A$ be a $\sigma$-unital \cstar-algebra with $T(A)$ non-empty and compact.
Let $\alpha:G \acts A$ be an action by a countable discrete group.
Let $\e >0$ and finite subsets $F \ssubset G$ and $H \ssubset G$ be given such that $|gH\Delta H| < \e |H|$ for all $g \in F$.
Then for all $\|\cdot\|_{2,T_\omega(A)}$-separable subsets $S \subset A^{\omega,\rm b}$, all $\delta >0$ and all $a_1, \hdots, a_k \in A_+$ with  
\[
\frac{\delta}{|H|} > \sup_{\tau \in T(A)}\min_{i = 1, \hdots, k}\ \tau(a_i),
\]
there exist $e_1, \hdots, e_k \in (A^{\omega,\rm b} \cap S')_+^1$ such that for $\tau \in T_\omega(A)$:
\begin{itemize}[itemsep=1ex,topsep=1ex]
\item $\tau(\sum_{i=1}^k e_i)=1$,
\item $\tau(a_ie_i) \leq \delta \tau(e_i)$ for $i=1, \hdots, k$, and
\item $\displaystyle \max_{g \in F} \sum_{i=1}^k\|\alpha^\omega_g(e_i) - e_i\|_{1,\tau} < \e$.
\end{itemize}                                                                                                                                                                                                                                                                                                            
\end{lemma}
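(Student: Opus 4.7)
The plan is to combine the weak CPoU from Proposition~\ref{prop:weak-cpou} with an averaging over the F{\o}lner set $H$, applied to an enlarged family that accounts for the $\alpha$-translates of the $a_i$.

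First I would enlarge the separable subset to the $\alpha^\omega$-orbit $\tilde S := \bigcup_{g\in G}\alpha^\omega_g(S)$ (together with the elements $\{\alpha_g(a_i) : g\in G,\ i=1,\dots,k\}$), which remains $\|\cdot\|_{2,T_\omega(A)}$-separable because $G$ is countable. This guarantees that any element of $A^{\omega,\mathrm b}\cap \tilde S'$ continues to commute with $S$ after being pushed forward by any $\alpha^\omega_h$, and commutes with each $\alpha_{h^{-1}}(a_i)\in A$.

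Apply Proposition~\ref{prop:weak-cpou} to the family $\{\alpha_{h^{-1}}(a_i) : h\in H,\ i=1,\dots,k\}$ of $k|H|$ positive elements in $A$ with threshold $\delta/|H|$. The hypothesis holds because
\[
\sup_{\tau\in T(A)}\min_{(h,i)}\tau(\alpha_{h^{-1}}(a_i)) \;\leq\; \sup_{\tau\in T(A)}\min_i\tau(a_i) \;<\; \delta/|H|
\]
(take $h=e$ in the inner minimum). This produces positive contractions $f_{h,i}\in(A^{\omega,\mathrm b}\cap\tilde S')^1_+$ such that $\tau(\sum_{h,i}f_{h,i})=1$ and $\tau(\alpha_{h^{-1}}(a_i)f_{h,i})\leq\tfrac{\delta}{|H|}\tau(f_{h,i})$ for every limit trace $\tau\in T_\omega(A)$.

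With these I would define the target contractions by the $\alpha^\omega$-average
\[
e_i \;:=\; \sum_{h\in H}\alpha^\omega_h(f_{h,i}),\qquad i=1,\dots,k,
\]
(possibly with normalization by $|H|$). Writing $\tau_h:=\tau\circ\alpha^\omega_h$, which is itself a limit trace, the commutation of $f_{h,i}$ with $\alpha_{h^{-1}}(a_i)$ arranged via $\tilde S$ gives $\alpha^\omega_h(\alpha_{h^{-1}}(a_i)f_{h,i})=a_i\alpha^\omega_h(f_{h,i})$, and hence the trace bound follows immediately:
\[
\tau(a_ie_i) \;=\; \sum_h\tau_h(\alpha_{h^{-1}}(a_i)f_{h,i}) \;\leq\; \frac{\delta}{|H|}\sum_h\tau_h(f_{h,i}) \;=\; \frac{\delta}{|H|}\tau(e_i) \;\leq\; \delta\,\tau(e_i).
\]
For the F{\o}lner estimate, one reindexes $\alpha^\omega_g(e_i)=\sum_{h'\in gH}\alpha^\omega_{h'}(f_{g^{-1}h',i})$ and splits the difference $\alpha^\omega_g(e_i)-e_i$ according to whether the summation index lies in $gH\cap H$ or in $gH\Delta H$. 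Summing the $\|\cdot\|_{1,\tau}$-norm over $i$ and using the bound $\sum_i\tau_h(f_{h,i})\leq\tau_h(\sum_{h',i'}f_{h',i'})=1$, the boundary part contributes at most $|gH\Delta H|/|H|$, so that setting $\eta:=\e$ delivers $\sum_i\|\alpha^\omega_g(e_i)-e_i\|_{1,\tau}<\e$.

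The main obstacle I expect is verifying simultaneously the contractivity of $e_i$ and the trace-sum identity $\tau(\sum_ie_i)=1$: naively $\sum_h\alpha^\omega_h(f_{h,i})$ could have norm up to $|H|$, and writing $F_h:=\sum_if_{h,i}$ one only gets $\tau(\sum_ie_i)=\sum_h\tau_h(F_h)$, which is not obviously $1$ despite $\sum_{h',i}\tau_h(f_{h',i})=1$ holding for every individual $h$. Reconciling these requires either a careful normalization (which for the trace bound only strengthens the inequality, since $\delta/|H|\leq\delta$) or an appeal to the pairwise orthogonality of the $f_{h,i}$ available in the tracially complete framework of \cite{CCEGSTW} underlying Proposition~\ref{prop:weak-cpou}, which would arrange the $\alpha^\omega_h(F_h)$ into a genuine partition of unity. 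Balancing contractivity, exact trace-sum equality, the trace bound, and the F{\o}lner estimate is the technical heart of the construction; the F{\o}lner hypothesis on $H$ enters only in the last of these estimates and determines the dependence of $\eta$ on $\e$.
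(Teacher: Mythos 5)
There is a genuine gap, and it lies in the order of operations. You apply weak CPoU to the enlarged family $\{\alpha_{h^{-1}}(a_i)\}_{h\in H,\,i}$ of $k|H|$ elements and then try to assemble $e_i=\sum_{h\in H}\alpha^\omega_h(f_{h,i})$. Beyond the normalization/trace-sum problem you already flag (indeed $\sum_h\tau_h(F_h)$ with $F_h=\sum_i f_{h,i}$ need not equal $1$, and after dividing by $|H|$ it can be as small as $1/|H|$), there is a second, more fatal defect that the proposal does not address: the F{\o}lner estimate breaks down. Writing $\alpha^\omega_g(e_i)=\sum_{h'\in gH}\alpha^\omega_{h'}(f_{g^{-1}h',i})$, the terms indexed by $h'\in gH\cap H$ contribute $\alpha^\omega_{h'}\bigl(f_{g^{-1}h',i}-f_{h',i}\bigr)$, which do \emph{not} cancel because the pieces $f_{h,i}$ genuinely depend on $h$. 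Your claimed bound $|gH\Delta H|/|H|$ only accounts for the boundary terms, so the third bullet of the lemma is not established by this construction.

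The paper's proof reverses the two averaging steps, and this is precisely what removes both obstructions. One first averages the \emph{inputs}, setting $a_i':=|H|^{-1}\sum_{h\in H}\alpha_{h^{-1}}(a_i)$; since $|H|^{-1}\sum_{h}\tau\circ\alpha_{h^{-1}}$ is again a tracial state, the hypothesis of Proposition~\ref{prop:weak-cpou} holds for the $k$ elements $a_1',\dots,a_k'$ with threshold $\delta/|H|$, producing $h$-independent contractions $e_1',\dots,e_k'$ commuting with $\bigcup_g\alpha^\omega_g(S)$ and satisfying $\tau(\sum_ie_i')=1$ and, by positivity of each summand of $a_i'$, $\tau(\alpha_{h^{-1}}(a_i)e_i')\le\delta\,\tau(e_i')$ for every $h\in H$. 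Only then does one average the \emph{outputs}: $e_i:=|H|^{-1}\sum_{h\in H}\alpha^\omega_h(e_i')$. Because $e_i'$ does not depend on $h$, the interior terms in $\alpha^\omega_g(e_i)-e_i$ cancel exactly, leaving only $|H|^{-1}\sum_{h\in gH\Delta H}\alpha^\omega_h(e_i')$ and hence the bound $|gH\Delta H|/|H|<\e$; contractivity is immediate from the normalized average, and $\tau(\sum_ie_i)=1$ holds exactly since each $\tau\circ\alpha^\omega_h$ is again a limit trace. If you want to salvage your argument, you should replace the $k|H|$-element application of weak CPoU by this averaged $k$-element one; as written, the construction does not prove the lemma.
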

\begin{proof}
Given $S \subset A^{\omega,\rm b}$, $\delta >0$, and  $a_1, \hdots, a_k \in A_+$ as above, we define
\[
a_i' := \frac{1}{|H|} \sum_{g \in H} \alpha_{g^{-1}}(a_i) ,\quad i=1,\dots,k.
\]
Note that for each $\tau \in T(A)$ the trace $\frac{1}{|H|} \sum_{g \in H} \tau \circ \alpha_{g^{-1}}$ is again an element of $T(A)$, so we see that
\[ \frac{\delta}{|H|} > \sup_{\tau \in T(A)}\min_{i = 1, \hdots, k}\ \tau(a_i').\]
By Proposition \ref{prop:weak-cpou}, we know that there exist $e'_1, \hdots, e'_k \in ( A^\omega\cap (\bigcup_{g \in G} \alpha_g^\omega(S))')_+^1$ such that for $\tau \in T_\omega(A)$
\begin{align}\tau(\sum_{i=1}^k e'_i) &=1 \text{ and }\label{eq:first_precursor_sum_1}\\
 \tau(a_i'e'_i) &\leq \frac{\delta}{|H|}\tau(e'_i) \quad \text{ for } i=1, \hdots, k.\nonumber
\end{align}
In particular, this last equation implies that
\begin{equation}\label{eq:first_precursor_delta_inequality} 
\tau(\alpha_{g^{-1}}(a_i)e_i') \leq \delta \tau(e_i') ,\quad g\in H,\, \tau \in T_\omega(A).
\end{equation}
Now for $i = 1, \hdots, k$, define
\[
e_i :=|H|^{-1} \sum_{g \in H} \alpha_g^\omega(e_i').
\]
Clearly this still is a positive contraction in $A^\omega \cap S'$. Notice that 
\[
\tau\Big(\sum_{i=1}^ke_i\Big) =|H|^{-1} \sum_{g \in H}  (\tau \circ \alpha_g^\omega)(\sum_{i=1}^k e'_i)  \overset{\eqref{eq:first_precursor_sum_1}}{=} 1 \quad \text{for }\tau \in T_\omega(A).
\]
For $\tau \in T_\omega(A)$ and $i=1, \hdots, k$ we have
\begin{align*} \tau(a_ie_i) &=|H|^{-1} \tau(a_i \displaystyle\sum_{g \in H} \alpha_g^\omega(e'_i))\\
&= |H|^{-1} \displaystyle\sum_{g \in H}(\tau \circ \alpha_g^\omega)(\alpha_{g^{-1}}(a_i)e'_i)\\
\overset{\eqref{eq:first_precursor_delta_inequality}}&{\leq} |H|^{-1} \displaystyle\sum_{g \in H}\delta(\tau \circ \alpha_g^\omega) (e'_i)\\
&= \delta \tau(e_i).\\ 
\end{align*}
Lastly, we see that for $g \in F$ and $\tau \in T_\omega(A)$ we have
\begin{align*}
\sum_{i=1}^k\|\alpha_g^\omega(e_i) - e_i \|_{1,\tau} &\leq |H|^{-1}\sum_{i=1}^k \sum_{h \in gH \Delta H}\|\alpha_h^\omega(e_i')\|_{1,\tau}\\
&= |H|^{-1} \sum_{h \in gH \Delta H}\sum_{i=1}^k\tau(\alpha_h^\omega(e_i'))\\
\overset{\eqref{eq:first_precursor_sum_1}}&{=}|H|^{-1}|gH\Delta H| < \e.
\end{align*}
\end{proof}

Analogously as in the non-dynamical setting (cf.\ \cite[Lemma 2.4]{CETWW21}), (local) equivariant property Gamma allows one to replace positive contractions by projections without changing the tracial values in $A^{\omega,\rm b}$.
A different generalization of this lemma was proved in \cite[Proposition 3.4]{GardellaHirshbergVaccaro}, but for the purposes of this paper we need a way to control the (tracially) approximate fixedness of the elements for the action.

\begin{lemma}
\label{lemma:eq_tracial_proj}
Let $A$ be a $\sigma$-unital \cstar-algebra with $T(A)$ non-empty and compact, and let $\alpha: G \acts A$ be an action of a countable discrete group.
Assume that $\alpha$ has local equivariant property Gamma w.r.t.\ bounded traces.
Let $S \subset A^{\omega,\rm b}$ be a $\|\cdot\|_{2,T_\omega(A)}$-separable subset, and let $b \in A^{\omega,\rm b}\cap S'$ be a positive contraction.
Then there exists a projection $p \in A^{\omega,\rm b}\cap S'$ such that 
\begin{equation}\label{eq:same_tracial_behavior}
\tau(ap) = \tau(ab)\quad \text{ for } a \in S,\ \tau \in T_\omega(A),\end{equation}
and such that for all $g \in G$ and $\tau \in T_\omega(A)$ one has
\begin{equation}\label{eq:tracial_inequality}
\|\alpha^\omega_g(p) - p\|^2_{2,\tau} \leq \|\alpha^\omega_g(b)^{1/2}- b^{1/2}\|_{2,\tau}\|\alpha^\omega_g(b)^{1/2}+ b^{1/2}\|_{2,\tau}.
\end{equation}
\end{lemma}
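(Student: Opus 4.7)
My strategy is to follow the non-dynamical prototype \cite[Lemma 2.4]{CETWW21}, enhanced with equivariant information. First, since $G$ is countable, the set $T := S \cup \{\alpha_g^\omega(b) : g \in G\}$ remains $\|\cdot\|_{2, T_\omega(A)}$-separable. I would then apply local equivariant property Gamma with respect to bounded traces to $T$, for each $n \geq 2$, to obtain pairwise orthogonal projections $q_1^{(n)}, \ldots, q_n^{(n)} \in (A^{\omega,\mathrm{b}})^{\alpha^\omega} \cap T'$ with $\tau(xq_j^{(n)}) = \frac{1}{n}\tau(x)$ for all $x \in T$ and $\tau \in T_\omega(A)$. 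Since $T(A)$ is compact, $A^{\omega,\mathrm{b}}$ is unital (Remark \ref{remark:tracial_ultrapower_unital_T(A)_compact}), and the defining trace property forces $\sum_j q_j^{(n)} = 1_{A^{\omega,\mathrm{b}}}$.

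The desired projection would be built by layering spectral projections of $b$ against the fixed orthogonal system $\{q_j^{(n)}\}_j$. Concretely, I would set $p_n := \sum_{j=1}^n e_j^{(n)} q_j^{(n)}$, where $e_j^{(n)} \in A^{\omega,\mathrm{b}}$ approximates the spectral projection $\chi_{[j/n, 1]}(b)$ in $\|\cdot\|_{2,\tau}$ for all $\tau \in T_\omega(A)$. Such approximations can be arranged via a reindexing argument on a representing sequence of $b$, or equivalently by first constructing the spectral projections in the GNS von Neumann algebras and invoking Kaplansky density. The orthogonality of the $q_j^{(n)}$ together with their commutation with $b$ then ensures that $p_n$ is asymptotically a projection, and for $x \in S$,
\[
\tau(x p_n) = \frac{1}{n}\sum_j \tau(x e_j^{(n)}) \xrightarrow[n \to \infty]{} \tau\Big(x \int_0^1 \chi_{[t,1]}(b)\, dt\Big) = \tau(xb).
\]
Since the $q_j^{(n)}$ are $\alpha^\omega$-fixed and commute with $\alpha_g^\omega(b)$, we have $\alpha_g^\omega(p_n) = \sum_j \chi_{[j/n,1]}(\alpha_g^\omega(b)) q_j^{(n)}$, and a direct computation using orthogonality and the trace property yields
\[
\|\alpha_g^\omega(p_n) - p_n\|_{2,\tau}^2 \xrightarrow[n \to \infty]{} \tau(b) + \tau(\alpha_g^\omega(b)) - 2 \int_0^1 \tau\big(\chi_{[t,1]}(b)\chi_{[t,1]}(\alpha_g^\omega(b))\big)\,dt.
\]

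The main obstacle will be bounding the latter limit by $\|\alpha_g^\omega(b)^{1/2} - b^{1/2}\|_{2,\tau}\|\alpha_g^\omega(b)^{1/2} + b^{1/2}\|_{2,\tau}$. The appearance of $b^{1/2}$ and $\alpha_g^\omega(b)^{1/2}$ on the right-hand side is natural: rewriting $b^{1/2} = \int_0^1 (2\sqrt{t})^{-1} \chi_{[t,1]}(b)\, dt$ and applying tracial Cauchy--Schwarz to the spectral representations of $b^{1/2} \pm \alpha_g^\omega(b)^{1/2}$ should produce precisely such an upper bound, so the estimate amounts to a careful spectral integration argument in the spirit of Lemma \ref{lemma:Powers-Stormer}. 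Once this is in place, I would apply Kirchberg's $\varepsilon$-test to the sequence $(p_n)$ to extract a single projection $p \in A^{\omega,\mathrm{b}} \cap S'$ satisfying the trace identity $\tau(xp) = \tau(xb)$ exactly for $x \in S$, $\tau \in T_\omega(A)$, and satisfying the desired invariance bound.
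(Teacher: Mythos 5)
Your overall strategy --- layering functions of $b$ against an $\alpha^\omega$-fixed orthogonal system produced by local equivariant property Gamma --- is the same as the paper's, but two points in your write-up are genuine gaps rather than routine details. First, you build $p_n$ from approximants $e_j^{(n)}$ of the \emph{sharp} spectral projections $\chi_{[j/n,1]}(b)$. Such projections need not exist in $A^{\omega,\rm b}$ (it is not a von Neumann algebra and no real-rank-zero type hypothesis is available here), and neither reindexing nor Kaplansky density gives you elements approximating them in $\|\cdot\|_{2,\tau}$ \emph{uniformly over all} $\tau\in T_\omega(A)$: if the spectral distribution of $b$ under some limit trace has mass concentrated near a cut point $j/n$, the continuous functional calculus approximants do not converge in the uniform tracial $2$-norm. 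The paper sidesteps this entirely by using the continuous piecewise-linear functions $f_i$ (vanishing on $[0,(i-1)/n]$, equal to $1$ on $[i/n,1]$), which are honest elements $f_i(b)\in A^{\omega,\rm b}$, satisfy $\frac1n\sum_i f_i=\mathrm{id}_{[0,1]}$ exactly (so the trace identity \eqref{eq:same_tracial_behavior} needs no limit in $n$ for that part), and give $\|p-p^2\|_{2,T_\omega(A)}^2\le 1/n$ because $f_i-f_i^2$ has small disjointly supported profile. Any rigorous repair of your construction effectively forces you to smear the cutoffs, i.e.\ to reproduce the $f_i$.

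Second, and more seriously, the estimate \eqref{eq:tracial_inequality} is the entire point of this lemma compared with its non-dynamical predecessor \cite[Lemma 2.4]{CETWW21}, and you explicitly defer it (``should produce precisely such an upper bound''). Your limit expression is, after rewriting with a joint distribution, equal to $\int_{[0,1]^2}|s-t|\,\mathrm{d}\nu(s,t)$, where $\nu$ is the measure from \cite[I.1]{Connes76} encoding the pair $(\alpha_g^\omega(b),b)$ under $\tau$; so you have in effect reduced to exactly the point where the paper's argument does its work. The paper gets there cleanly via the identity $\frac1n\sum_i|f_i(s)-f_i(t)|=|s-t|$ together with $|f_i(s)-f_i(t)|^2\le|f_i(s)-f_i(t)|$, and then concludes by Cauchy--Schwarz applied to $|s-t|=|s^{1/2}-t^{1/2}|\,|s^{1/2}+t^{1/2}|$, which yields precisely the right-hand side of \eqref{eq:tracial_inequality}. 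Until you supply this step (and justify the uniform-in-$\tau$ spectral approximation, or replace it by continuous functional calculus), the proof is incomplete at its most essential point.
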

\begin{proof}
Fix $n \in \N$.
By a common reindexation trick, it suffices to find a positive contraction $p\in A^{\omega,\rm b}\cap S'$ satisfying \eqref{eq:same_tracial_behavior}, \eqref{eq:tracial_inequality} and $\|p-p^2\|^2_{2,T_\omega(A)} \leq 1/n.$
An element $p$ satisfying all the necessary properties except \eqref{eq:tracial_inequality} is constructed in the proof of \cite[Lemma 2.4]{CETWW21} with the use of uniform property Gamma.
We show that when the construction is carried out using (local) equivariant property Gamma instead, the resulting projection also satisfies the extra condition \eqref{eq:tracial_inequality}.

Just as in \cite{CETWW21}, we define functions $f_1, \hdots, f_n \in C([0,1])$ such that ${f_i}\big\rvert_{ [0,(i-1)/n]} = 0, {f_i}\big\rvert_{ [i/n,1]} = 1,$ and $f_i$ is linear on $[(i-1)/n,i/n]$.  Note that not only $\frac{1}{n}\sum_{i=1}^n f_i = \mathrm{id}_{[0,1]}$, but the monotonicity of each $f_i$ also implies
\begin{equation}\label{eq:sum_unity}
\frac{1}{n}\sum_{i=1}^n |f_i(t_1)-f_i(t_2)| = |t_1-t_2|,\quad t_1,t_2\in [0,1]. 
\end{equation}
By local equivariant property Gamma w.r.t.\ bounded traces we can find pairwise orthogonal projections $p_1, \hdots, p_n \in (A^{\omega,\rm b})^{\alpha^\omega} \cap S' \cap \{b\}'$ such that 
$\tau(p_ix) = \frac{1}{n} \tau(x)$ for $i=1, \hdots, n, \tau \in T_\omega(A), x \in C^*(S \cup\{b\})$. Define 
\[
p:= \sum_{i = 1}^n p_i f_i(b) \in A^{\omega,\mathrm{b}} \cap S'.
\]
By repeating the arguments in the proof of \cite[Lemma 2.4]{CETWW21} verbatim, we may conclude that $\|p-p^2\|^2_{2,T_\omega(A)} \leq \frac{1}{n}$ and $\tau(ap) = \tau(ab)$ for all $a \in S$ and $\tau \in T_\omega(A)$. 
We need to show that \eqref{eq:tracial_inequality} holds as well.
Fix $\tau \in T_\omega(A)$ and $g\in G$. By \cite[I.1]{Connes76} there exists a positive Radon measure $\nu$ on $[0,1]^2$ such that for every pair of functions $h_1,h_2 \in C_0((0,1])$, the functions $(s, t) \mapsto h_1(s)$ and $(s, t) \mapsto h_2(t)$ are square integrable, and
\[
\|h_1(\alpha^\omega_g(b)) - h_2(b)\|_{2,\tau}^2 = \int_{[0,1]^2}|h_1(s)-h_2(t)|^2\, \mathrm{d}\nu(s,t).
\]
Then we get
\begin{align*}
\|\alpha^\omega_g(p)-p\|_{2,\tau}^2 &= \big\| \sum_{i=1}^n p_i (f_i(\alpha^\omega_g(b))- f_i(b)) \big\|_{2,\tau}^2 \\
&= \frac{1}{n} \sum_{i=1}^n \| f_i(\alpha^\omega_g(b))- f_i(b)\|_{2,\tau}^2\\
&= \frac{1}{n} \sum_{i=1}^n \int_{[0,1]^2} |f_i(s) - f_i(t)|^2 \, \mathrm{d} \nu(s,t)\\
&\leq \frac{1}{n} \sum_{i=1}^n \int_{[0,1]^2} |f_i(s) - f_i(t)|\, \mathrm{d} \nu(s,t)\\
\overset{\eqref{eq:sum_unity}}&{=} \int_{[0,1]^2} |s-t|\, \mathrm{d} \nu(s,t)\\
&\leq \sqrt{\int_{[0,1]^2} |s^{1/2}- t^{1/2}|^2 \, \mathrm{d} \nu(s,t)}\sqrt{\int_{[0,1]^2} |s^{1/2}+ t^{1/2}|^2 \, \mathrm{d} \nu(s,t)}\\
&= \|\alpha^\omega_g(b)^{1/2}- b^{1/2}\|_{2,\tau}\|\alpha^\omega_g(b)^{1/2}+ b^{1/2}\|_{2,\tau}.
\end{align*} 
This ends the proof.
\end{proof}

Using Lemma \ref{lemma:eq_tracial_proj}, we can construct orthogonal projections that play a similar role to the positive elements in Lemma \ref{lemma:precursor_1}.

\begin{lemma}\label{lemma:precursor_2}
Let $A$ be a $\sigma$-unital \cstar-algebra with $T(A)$ non-empty and compact.
Let $\alpha:G \acts A$ be an action by a countable discrete group and assume it has local equivariant property Gamma w.r.t.\ bounded traces.
Let $\e >0$, $F \ssubset G$ and $H \ssubset G$ be such that $|gH \Delta H| < \e|H|$ for all $g \in F$.
Then for every $\|\cdot\|_{2,T_\omega(A)}$-separable subset $S \subset A^{\omega,\rm b}$, all $\delta >0$ and all $a_1, \hdots, a_k \in A_+$ with  
\[
\frac{\delta}{|H|} > \sup_{\tau \in T(A)}\min_{i = 1, \hdots, k}\ \tau(a_i),
\]
there exist pairwise orthogonal projections $p_1, \hdots, p_k \in  A^{\omega,\mathrm{b}} \cap S'$ such that for all $\tau \in T_\omega(A)$ 
\begin{itemize}[itemsep=1ex,topsep=1ex]
\item $\tau(\sum_{i=1}^k p_i) = \frac{1}{k}$,
\item $\tau(a_ip_i) \leq \delta \tau(p_i)$ for $i=1, \hdots, k$, and
\item $\displaystyle \max_{g \in F} \sum_{i=1}^k\|\alpha^\omega_g(p_i) -p_i\|_{2,\tau}^2 < \frac{2 \sqrt{\e}}{k}$ . 
\end{itemize}
\end{lemma}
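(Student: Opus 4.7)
The plan follows the three-step strategy outlined after the statement, combining Lemma \ref{lemma:precursor_1} (which produces suitable but not-necessarily-orthogonal contractions), Lemma \ref{lemma:eq_tracial_proj} (which turns a single contraction into a projection), and local equivariant property Gamma w.r.t.\ bounded traces (which forces the final projections to be pairwise orthogonal and equidistributes the trace). First, after possibly enlarging $S$ to contain $a_1,\dots,a_k$ as well as a sequential approximate unit of $A$, apply Lemma \ref{lemma:precursor_1} to obtain positive contractions $e_1,\dots,e_k\in(A^{\omega,\mathrm b}\cap S')_+^1$ satisfying $\tau(\sum_i e_i)=1$, $\tau(a_i e_i)\le\delta\tau(e_i)$, and $\max_{g\in F}\sum_i\|\alpha^\omega_g(e_i)-e_i\|_{1,\tau}<\e$. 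Then apply Lemma \ref{lemma:eq_tracial_proj} for each $i$ with the separable set $S\cup\{e_1,\dots,e_k\}$ to obtain projections $p_1',\dots,p_k'\in A^{\omega,\mathrm b}\cap S'$ with $\tau(ap_i')=\tau(ae_i)$ for $a$ in the generated $C^*$-algebra, and
\[
\|\alpha^\omega_g(p_i')-p_i'\|_{2,\tau}^2 \le \|\alpha^\omega_g(e_i)^{1/2}-e_i^{1/2}\|_{2,\tau}\cdot\|\alpha^\omega_g(e_i)^{1/2}+e_i^{1/2}\|_{2,\tau}.
\]
Finally, invoke local equivariant property Gamma w.r.t.\ bounded traces to find pairwise orthogonal projections $q_1,\dots,q_k\in(A^{\omega,\mathrm b})^{\alpha^\omega}$ in the commutant of the still-separable subalgebra generated by $S\cup\{p_j',\alpha^\omega_g(p_j')\mid j\le k,\ g\in F\}$, and satisfying $\tau(q_i x)=\tfrac1k\tau(x)$ for all $x$ there and all $\tau\in T_\omega(A)$. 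Set $p_i:=p_i'q_i$; these are pairwise orthogonal projections in $A^{\omega,\mathrm b}\cap S'$.

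The first two tracial conclusions follow immediately: $\tau(p_i)=\tau(p_i'q_i)=\tfrac1k\tau(p_i')=\tfrac1k\tau(e_i)$ (using that the unit of $A^{\omega,\mathrm b}$ is in the closure of $S$ to get $\tau(p_i')=\tau(e_i)$), so $\tau(\sum_i p_i)=\tfrac1k$, while $\tau(a_i p_i)=\tfrac1k\tau(a_i p_i')=\tfrac1k\tau(a_i e_i)\le\tfrac{\delta}{k}\tau(e_i)=\delta\tau(p_i)$. For the invariance estimate, since $q_i$ is $\alpha^\omega$-fixed, one has $\alpha^\omega_g(p_i)-p_i=(\alpha^\omega_g(p_i')-p_i')q_i$ and hence $\|\alpha^\omega_g(p_i)-p_i\|_{2,\tau}^2=\tfrac1k\|\alpha^\omega_g(p_i')-p_i'\|_{2,\tau}^2$. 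Summing over $i$, applying Cauchy--Schwarz in the sum, and combining the Powers--St\o rmer inequality (Lemma \ref{lemma:Powers-Stormer}) with the triangle-inequality bound $\|\alpha^\omega_g(e_i)^{1/2}+e_i^{1/2}\|_{2,\tau}^2\le 2((\tau\circ\alpha^\omega_g)(e_i)+\tau(e_i))$ and the identity $\sum_i\tau(e_i)=\sum_i(\tau\circ\alpha^\omega_g)(e_i)=1$ then yields
\[
\sum_{i=1}^k\|\alpha^\omega_g(p_i)-p_i\|_{2,\tau}^2 \le \tfrac{1}{k}\left(\sum_{i=1}^k\|\alpha^\omega_g(e_i)-e_i\|_{1,\tau}\right)^{1/2}\cdot 2 < \tfrac{2\sqrt{\e}}{k}.
\]

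I expect the only real obstacle to be careful bookkeeping: the trace-division identity $\tau(q_i x)=\tfrac1k\tau(x)$ must hold simultaneously for $x$ equal to each of $p_i'$, $a_i p_i'$, and $(\alpha^\omega_g(p_i')-p_i')^2$ for every $g\in F$. All of these lie in the $*$-algebra generated by $S$ together with the $p_j'$ and their $\alpha^\omega_g$-translates, which is countably generated and hence $\|\cdot\|_{2,T_\omega(A)}$-separable in $A^{\omega,\mathrm b}$, so a single application of Definition \ref{definition:localequivariantGamma} furnishes the desired $q_i$.
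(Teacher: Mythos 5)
Your proposal is correct and follows essentially the same route as the paper's proof: Lemma \ref{lemma:precursor_1} for the positive contractions, Lemma \ref{lemma:eq_tracial_proj} to promote them to projections, then a single application of local equivariant property Gamma to Gamma-fixed pairwise orthogonal $q_i$ commuting with the separable algebra generated by the $p_j'$ and their translates, finally setting $p_i=p_i'q_i$. The only (harmless) cosmetic differences are that the paper takes the translates over all $g\in G$ rather than $g\in F$ and adjoins $1_{A^{\omega,\mathrm b}}$ directly instead of an approximate unit, and your Cauchy--Schwarz bookkeeping for the $2\sqrt{\e}/k$ bound matches the paper's estimate.
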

\begin{proof}
By Lemma \ref{lemma:precursor_1}, we can find $e_1, \hdots, e_k \in (A^{\omega,\rm b} \cap S')_+^1$ such that for all $\tau \in T_\omega(A)$
\begin{align}
\tau(\sum_{i=1}^k e_i)&=1, \label{eq:precursor_unity}\\
\tau(a_ie_i) &\leq \delta \tau(e_i) \text{ for } i=1, \hdots, k, \text{ and}\label{eq:precursor_delta_inequality}\\
\max_{g \in F} \sum_{i=1}^k\|\alpha^\omega_g(e_i) - e_i\|_{1,\tau} &< \e.\label{eq:precursor_invariance}
\end{align}
Let $S_0 = S \cup \{1_{A^{\omega,\rm b}}, a_1, \hdots, a_k\}$. Apply Lemma \ref{lemma:eq_tracial_proj} for each $i \in \{1,\hdots, k\}$ and find a projection $p_i \in A^{\omega,\mathrm{b}} \cap S_0'$ such that for all $a \in S_0$, $\tau \in T_\omega(A)$ and $g\in G$, we have
\begin{align}
 \tau(ap_i) &= \tau(ae_i), \label{eq:precursor_same_tracial_behavior}\\
 \|\alpha^\omega_g(p_i) - p_i\|^2_{2,\tau} &\leq \|\alpha^\omega_g(e_i)^{1/2}- e_i^{1/2}\|_{2,\tau}\|\alpha^\omega_g(e_i)^{1/2}+ e_i^{1/2}\|_{2,\tau}.\label{eq: precursor_invariance_relation}
\end{align}
This already implies the two following facts:
\begin{align}
&\tau(\sum_{i=1}^k p_i) \overset{\eqref{eq:precursor_same_tracial_behavior}}{=} \tau(\sum_{i=1}^k e_i) \overset{\eqref{eq:precursor_unity}}{=} 1 \text{ for }\tau \in T_\omega(A), \text{ and} \label{eq:precursor_sum_1_projections}\\
& \tau(a_ip_i)\overset{\eqref{eq:precursor_same_tracial_behavior}}{=}\tau(a_ie_i) \overset{\eqref{eq:precursor_delta_inequality}}{\leq}\delta \tau(e_i) \overset{\eqref{eq:precursor_same_tracial_behavior}}{=} \delta \tau(p_i) \text{ for } i=1, \hdots, k, \,\tau \in T_\omega(A). \label{eq:precursor_delta_inequality_projection}\end{align}
Furthermore, we get for $g \in F$ and $\tau \in T_\omega(A)$ that
\begin{align}
\sum_{i=1}^k\|\alpha^\omega_g(p_i) -p_i\|_{2,\tau}^2  \overset{\eqref{eq: precursor_invariance_relation}}&{\leq} \sum_{i=1}^k \|\alpha^\omega_g(e_i)^{1/2}- e_i^{1/2}\|_{2,\tau}\|\alpha^\omega_g(e_i)^{1/2}+ e_i^{1/2}\|_{2,\tau}\nonumber\\
&\leq \sum_{i=1}^k \|\alpha^\omega_g(e_i)^{1/2}- e_i^{1/2}\|_{2,\tau}(\|\alpha_g^\omega(e_i)^{1/2}\|_{2,\tau}+ \|e_i^{1/2}\|_{2,\tau})\nonumber\\
 \overset{\text{Lemma } \ref{lemma:Powers-Stormer}}&{\leq}\sum_{i=1}^k \|\alpha^\omega_g(e_i)- e_i\|_{1,\tau}^{1/2}(\|\alpha_g^\omega(e_i)^{1/2}\|_{2,\tau}+ \|e_i^{1/2}\|_{2,\tau})\nonumber\\
&= \sum_{i=1}^k \|\alpha^\omega_g(e_i)- e_i\|_{1,\tau}^{1/2} \tau( \alpha_g^\omega(e_i))^{1/2}+ \sum_{i=1}^k \|\alpha^\omega_g(e_i)- e_i\|_{1,\tau}^{1/2}\tau(e_i)^{1/2} \nonumber\\
& \leq \sqrt{\sum_{i=1}^k \|\alpha^\omega_g(e_i)- e_i\|_{1,\tau} \sum_{i=1}^k (\tau\circ \alpha_g^\omega)(e_i)}+\sqrt{\sum_{i=1}^k \|\alpha^\omega_g(e_i)- e_i\|_{1,\tau} \sum_{i=1}^k \tau(e_i)}\nonumber\\
\overset{\eqref{eq:precursor_unity},\eqref{eq:precursor_invariance}}&{<} 2 \sqrt{\e}.\label{eq:precursor_invariance_projections}
\end{align}
Set 
\[
S_1 = S \cup \mathrm{C}^*\Big( \{a_1, \hdots, a_k\} \cup \{ \alpha^\omega_g(p_j) \mid 1\leq j\leq k,\ g\in G\} \Big) \subset A^{\omega,\mathrm{b}}.
\]
Since $A$ has local equivariant property Gamma w.r.t.\ bounded traces we can find pairwise orthogonal projections $r_1, \hdots, r_k \in (A^{\omega,\rm b} )^{\alpha^\omega}\cap S_1'$ such that
\begin{equation}\label{eq:precursor_equivariant_gamma}
\tau(r_ia) = \frac{1}{k}\tau(a) \quad \text{for } \tau \in T_\omega(A),\ a \in S_1,\ i=1, \hdots, k.
\end{equation}
Set $p_i' := r_ip_i$.
Then clearly $p_1', \hdots, p_k' \in A^{\omega,\rm b} \cap S'$ are pairwise orthogonal projections.
We get for each $\tau \in T_\omega(A)$ that
\[
\tau(\sum_{i=1}^k p'_i) = \tau(\sum_{i=1}^k r_ip_i) \overset{\eqref{eq:precursor_equivariant_gamma}}{=} \frac{1}{k}\tau(\sum_{i=1}^k p_i) \overset{\eqref{eq:precursor_sum_1_projections}}{=} \frac{1}{k}.
\]
For $i=1, \hdots, k$ and $\tau \in T_\omega(A)$ we get
\[\tau(a_ip'_i) = \tau(a_ir_ip_i) \overset{\eqref{eq:precursor_equivariant_gamma}}{=} \frac{1}{k}\tau(a_ip_i) \overset{\eqref{eq:precursor_delta_inequality_projection}}{\leq}\frac{\delta}{k}\tau(p_i) \overset{\eqref{eq:precursor_equivariant_gamma}}{=} \delta \tau(p_i').\]
Lastly, for $g \in F$ and $\tau \in T_\omega(A)$ we get
\[\sum_{i=1}^k\|\alpha^\omega_g(p'_i) -p'_i\|_{2,\tau}^2 \overset{\eqref{eq:precursor_equivariant_gamma}}{=} \frac{1}{k}\sum_{i=1}^k\|\alpha^\omega_g(p_i) -p_i\|_{2,\tau}^2 \overset{\eqref{eq:precursor_invariance_projections}}{<} \frac{2\sqrt{\e}}{k}.\]
Hence the projections $p_i'$ satisfy all the required properties.
This finishes the proof.
\end{proof}

In order to carry out the inductive argument to enlarge the trace of the sum of the constructed orthogonal projections, we need a stronger version of the previous lemma that allows us to find the orthogonal projections under an arbitrary tracially constant projection $q$:

\begin{lemma}\label{lemma:precursor_final}
Let $A$ be a $\sigma$-unital \cstar-algebra with $T(A)$ non-empty and compact.
Let $\alpha:G \acts A$ be an action by a countable discrete group and assume it has local equivariant property Gamma w.r.t.\ bounded traces.
Let $\e >0$, $F \ssubset G$ and $H \ssubset G$ be such that $|gH \Delta H| < \e|H|$ for all $g \in F$.
Choose $\delta >0$, and  $a_1, \hdots, a_k \in A_+$ such that
\[
\frac{\delta}{|H|} > \sup_{\tau \in T(A)}\min_{i = 1, \hdots, k}\ \tau(a_i).
\]
For every $\mu \in (0,1]$ and $\|\cdot\|_{2,T_\omega(A)}$-separable subset $S_0 \subset A^{\omega,\rm b}$, there exists a $\|\cdot\|_{2,T_\omega(A)}$-separable subset $S \subset A^{\omega,\rm b}$ such that if $q \in A^{\omega,\rm b} \cap S'$ is a projection with $\tau(q)=\mu$ for all $\tau \in T_\omega(A)$,
there exist pairwise orthogonal projections $p_1, \hdots, p_k \in  A^{\omega,\rm b} \cap S_0' \cap\{\alpha_g^\omega(q) \mid g \in G\}'$ such that for all $\tau \in T_\omega(A)$
\begin{itemize}[itemsep=1ex,topsep=1ex]
\item $\sum_{i=1}^k\tau(p_iq) = \frac{\mu}{k}$,
\item $\tau(a_ip_iq) \leq \delta \tau(p_iq)$ for $i=1, \hdots, k$, 
\item $\displaystyle \max_{g \in F} \sum_{i=1}^k\|q(\alpha^\omega_g(p_i) -p_i)\|_{2,\tau}^2 \leq \frac{2\mu\sqrt{\e}}{k}$, and
\item $\displaystyle \sum_{i=1}^k\|\alpha_g^\omega(p_i)(\alpha_g^\omega(q) - q)\|_{2,\tau}^2 \leq \frac{1}{k}\| \alpha_g^\omega(q) - q\|_{2,T_\omega(A)}^2$ for all $g \in G$.
\end{itemize}
\end{lemma}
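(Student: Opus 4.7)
The plan is to adapt the proof of Lemma \ref{lemma:precursor_2} to a central projection $q$ by performing all the key constructions \emph{after} $q$ has been fixed. The crucial flexibility is that, although $S$ must be named before $q$, the elements $e_i$, $P_i^{(1)}$, and $r_i$ are themselves constructed after $q$ is given, so we may include $q$ and its $\alpha^\omega$-orbit in each separable subset passed to Proposition \ref{prop:weak-cpou}, Lemma \ref{lemma:eq_tracial_proj}, and local equivariant property Gamma. As a result, it will suffice to let $S$ be the separable \cstar-algebra generated by $S_0\cup\{a_1,\ldots,a_k\}$.

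Given $q$, I would first reproduce the argument of Lemma \ref{lemma:precursor_1} with the separable subset enlarged to include $\{q,\alpha_g^\omega(q):g\in G\}$: applying Proposition \ref{prop:weak-cpou} to the averaged elements $a_i' := |H|^{-1}\sum_{g\in H} \alpha_{g^{-1}}(a_i)\in A_+$ produces positive contractions $e_i'\in A^{\omega,\mathrm{b}}$ commuting with $q$ and satisfying $\sum_i e_i' = 1$ and $\tau(a_i'e_i')\le(\delta/|H|)\tau(e_i')$ for all limit traces. Averaging $e_i := |H|^{-1}\sum_{g\in H}\alpha_g^\omega(e_i')$ then yields positive contractions commuting with every $\alpha_g^\omega(q)$, satisfying $\sum e_i=1$, $\tau(a_ie_i)\le\delta\tau(e_i)$, the F\o lner-type bound $\sum_i\|\alpha_g^\omega(e_i)-e_i\|_{1,\tau}<\varepsilon$ for $g\in F$, and, by the same F\o lner computation weighted by $q$, the crucial identity $\sum_i \tau(q|\alpha_g^\omega(e_i)-e_i|)<\mu\varepsilon$ (using $\sum_i e_i'=1$ and $\tau(q)=\mu$). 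Applying Lemma \ref{lemma:eq_tracial_proj} to each $e_i$ with an $\alpha^\omega$-invariant separable subset containing $\{q,\alpha_g^\omega(q),e_i,e_i^{1/2},\alpha_g^\omega(e_i^{1/2}):g,i\}$ then produces projections $P_i^{(1)}$ commuting with $q$ and its orbit, satisfying the tracial identity $\tau(aP_i^{(1)})=\tau(ae_i)$ on this set and the invariance bound of the lemma. Local equivariant property Gamma applied to the further enlargement by $\{P_i^{(1)},\alpha_g^\omega(P_i^{(1)}):i,g\}$ produces pairwise orthogonal $\alpha^\omega$-fixed projections $r_i$ with $\tau(r_ix)=\tau(x)/k$. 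I would then take $P_i := r_i P_i^{(1)}$.

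The four bullets are verified as follows. The trace sum computes as $\sum_i\tau(P_iq) = \tfrac{1}{k}\sum_i\tau(P_i^{(1)}q) = \tfrac{1}{k}\tau(q\sum_i e_i) = \mu/k$. The $\delta$-inequality unpacks to $\tau(a_iP_iq) = \tfrac{1}{k}\tau(qa_ie_i) \le \tfrac{\delta}{k}\tau(qe_i) = \delta\tau(P_iq)$. The third bullet follows from a weighted Powers--St{\o}rmer/Cauchy--Schwarz calculation patterned on Lemma \ref{lemma:precursor_2}, where the two factors arising from Cauchy--Schwarz are bounded respectively by $\sqrt{\mu\varepsilon}$ (from the weighted F\o lner estimate and Powers--St{\o}rmer) and $2\sqrt{\mu}$ (from $\sum_i\tau(q\alpha_g^\omega(e_i)) + \sum_i\tau(qe_i) = 2\mu$), yielding the bound $2\mu\sqrt{\varepsilon}/k$ after dividing by $k$. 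The fourth bullet uses the identity $\tau(y\,\alpha_g^\omega(P_i^{(1)}))=\tau(y\,\alpha_g^\omega(e_i))$ (obtained by applying Lemma \ref{lemma:eq_tracial_proj} to the translated limit trace $\tau\circ\alpha_g^\omega$) to write
\[
\sum_i\|\alpha_g^\omega(P_i)(\alpha_g^\omega(q)-q)\|_{2,\tau}^2 = \tfrac{1}{k}\sum_i\tau\bigl(\alpha_g^\omega(e_i)(\alpha_g^\omega(q)-q)^2\bigr) = \tfrac{1}{k}\tau\bigl((\alpha_g^\omega(q)-q)^2\bigr),
\]
bounded by $\tfrac{1}{k}\|\alpha_g^\omega(q)-q\|_{2,T_\omega(A)}^2$.

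The principal technical hurdle is the $\delta$-inequality $\tau(qa_ie_i)\le\delta\tau(qe_i)$ in bullet 2: the weak CPoU conclusion $\tau(a_ie_i)\le\delta\tau(e_i)$ for limit traces alone does not readily pass to the $q$-weighted trace. One resolves this either by appealing to the stronger algebraic form $(a_i'-\delta/|H|)_+e_i'=0$ inherent in the tracially complete formulation of \cite[Theorem 6.15]{CCEGSTW} underlying Proposition \ref{prop:weak-cpou} (under which $(a_i-\delta)_+e_i=0$ persists after averaging and multiplication by the commuting projection $q$), or equivalently by observing that $\tau(q\,\cdot\,)/\mu$ belongs to the weak-$*$-closed convex hull of $T_\omega(A)$ (which follows since local equivariant property Gamma implies property Gamma and hence the full CPoU for $A$). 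Either pathway propagates the inequality through the cut-down by $q$ and completes the argument.
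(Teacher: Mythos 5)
Your proposal stalls exactly at the point you flag as the ``principal technical hurdle,'' and neither of the two resolutions you offer closes it. The inequality you need is $\tau(q\,a_i'e_i')\leq (\delta/|H|)\,\tau(qe_i')$, i.e.\ the weak CPoU conclusion evaluated at the functional $\rho=\tau(q\,\cdot)/\mu$. Your first fix assumes an algebraic strengthening $(a_i'-\delta/|H|)_+e_i'=0$ of Proposition \ref{prop:weak-cpou}; but the conclusion of that proposition (and of the cited result it rests on) is purely tracial, of the form $\tau(a_i e_i)\leq\delta\tau(e_i)$ for limit traces, and no such algebraic identity is available. Your second fix asserts that $\rho$ lies in $\overline{\mathrm{conv}}^{w^*}T_\omega(A)$; but $q$ is only assumed to commute with the separable set $S$, not with all of $A^{\omega,\rm b}$ (in the intended application $q=1-\sum_i p_i^{(n)}$ is certainly not central), so $\rho$ is not even a tracial state on $A^{\omega,\rm b}$ and the no-silly-traces/CPoU argument does not apply to it. One can restrict $\rho$ to the relative commutant of $q$, where it is tracial, but then one must show that this restriction is a weak-$*$ limit of convex combinations of restrictions of limit traces --- and that claim is essentially equivalent to what needs proving. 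A smaller instance of the same problem appears in your third bullet, where the invariance estimate requires a $q$-weighted version of Lemma \ref{lemma:eq_tracial_proj} (Connes' measure trick applied to $\tau(q\,\cdot)$ rather than to $\tau$), which you do not establish.

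The paper avoids all of this by arguing by contradiction: it reduces to an approximate local statement, indexes the putative counterexamples $q_S$ by the finite subsets $S\ssubset A$, and passes to an ultrapower $A^{\tilde\omega,\rm b}$ over a free ultrafilter on that index set. The resulting projection $q$ then lies in $A^{\tilde\omega,\rm b}\cap A'$, i.e.\ it is \emph{exactly} central over $A$, so $a\mapsto\tau(a\,\alpha_g^{\tilde\omega}(q))/\mu$ is a genuine tracial state on $A$; since the elements $p_i'$ produced by (the approximate form of) Lemma \ref{lemma:precursor_2} live in $A$ and satisfy their estimates uniformly over all of $T(A)$, the cut-down inequalities follow for free. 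If you want to salvage a direct construction along your lines, you would in effect have to reproduce this second-ultrafilter centralization step; as written, the $\delta$-inequality in your second bullet is unproven.
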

\begin{proof}
Let $\e >0$, $F,H \ssubset G$, $\delta >0$ and 
$a_1, \hdots, a_k \in A_+$ be as in the statement of the lemma. In order to prove the claim, it suffices to prove the following local statement:

For every $\mu \in (0,1], \zeta >0$, $T\ssubset A$ and $E \ssubset G$, there exist $S \ssubset A$ and $\xi >0$ such that if $q \in A_+^1$ satisfies
\begin{align*}
\|q-q^2\|_{2,u} &< \xi,\\
\sup_{\tau \in T(A)} |\tau(q) - \mu| &< \xi, \text{ and}\\
\|[q,s]\|_{2,u} &< \xi \text{ for }s \in S,
\end{align*}
then there exist pairwise orthogonal $p_1, \hdots, p_k \in A_+^1$ such that
\begin{align*}
\|p_i-p_i^2\|_{2,u} &< \zeta,\\
\|[p_i,t]\|_{2,u} &< \zeta \text{ for } t \in T \cup \{\alpha_g(q) \mid g \in E\},\\
\sup_{\tau \in T(A)}|\sum_{i=1}^k \tau(p_iq) - \mu/k| &< \zeta,\\
\tau(a_ip_iq) &< \delta\tau(p_iq) + \zeta \text{ for } \tau \in T(A),\\
\sup_{\tau \in T(A)}\max_{g \in F} \sum_{i=1}^k \|q(\alpha_g(p_i) - p_i\|_{2,\tau}^2 &< \frac{2 \mu \sqrt{\e}}{k} +\zeta, \text{ and}\\
\sup_{\tau \in T(A)} \max_{g \in E} \sum_{i=1}^k \|\alpha_g(p_i)^{1/2}(\alpha_g(q) - q)\|_{2,\tau}^2 &< \frac{1}{k}\|\alpha_g(q) - q\|_{2,u}^2 + \zeta.
\end{align*}

We prove this local statement by contradiction. Suppose there exist $\mu \in (0,1]$, $\zeta >0$, $T \ssubset A$ and $E \ssubset G$ for which the statement does not hold.
In other words, this means that for every $\emptyset \neq S \ssubset A$ we can find a $q_{S} \in A_+^1$ such that 
\begin{align*}
\|q_{S}-q_{S}^2\|_{2,u} &< 1/|S|, \\
 \sup_{\tau \in T(A)} |\tau(q_{S}) - \mu| &< 1/|S|, \text{ and}\\
 \|[q_{S},s]\|_{2,u} &< 1/|S| \text{ for } s \in S,
\end{align*}
but there exist no pairwise orthogonal $p_1, \hdots, p_k \in A_+^1$ satisfying 
\begin{align}
\|p_i-p_i^2\|_{2,u} &< \zeta, \label{eq:precursor_under_q_contradiction_1}\\
\|[p_i,t]\|_{2,u} &< \zeta \text{ for } t \in T \cup \{\alpha_g(q_{S}) \mid g \in E\},\\
\sup_{\tau \in T(A)}|\sum_{i=1}^k \tau(p_iq_{S}) - \mu/k| &< \zeta,\\
\tau(a_ip_iq_{S}) &< \delta\tau(p_iq_{S}) + \zeta \text{ for } \tau \in T(A),\\
\sup_{\tau \in T(A)} \max_{g \in F}\sum_{i=1}^k \|q_{S}(\alpha_g(p_i) - p_i\|_{2,\tau}^2 &< \frac{2\mu\sqrt{\e}}{k}+\zeta, \text{ and}\\
\sup_{\tau \in T(A)}\max_{g \in E} \sum_{i=1}^k \|\alpha_g(p_i)^{1/2}(\alpha_g(q_{S}) - q_{S})\|_{2,\tau}^2 &< \frac{1}{k}\|\alpha_g(q_{S}) - q_{S}\|_{2,u}^2 + \zeta.\label{eq:precursor_under_q_contradiction_last}
\end{align}
In this way we get a net $(q_S)_{S}$ indexed by the finite subsets of $A$ equipped with inclusion as the natural partial order. We can take a free ultrafilter $\tilde{\omega}$ on this index set of finite subsets of $A$ as follows. For each $I \ssubset A$ consider the set
\[
P_I = \{J \ssubset A \mid I \subseteq J\}.
\]
As the collection of $P_I$ is closed under finite intersections, they form a filter basis and hence there is a minimal filter on the set of finite subsets of $A$ containing all the sets $P_I$ for $I\ssubset A$.
This filter will be free and can be extended to a free ultrafilter $\tilde{\omega}$ by Zorn's lemma.   

Similarly as in Definition \ref{definition:ultrapowers} and \ref{definition:tracial_ultrapowers}, we can define the norm and bounded tracial ultrapower of $A$ over the ultrafilter $\tilde{\omega}$.
We also get a set of limit traces $T_{\tilde{\omega}}(A)$ over $\tilde{\omega}$.
Then the net $(q_S)_S$ defines a projection $q \in A^{\tilde{\omega},\rm b} \cap A'$ with value $\mu$ on all limit traces on $A^{\tilde{\omega},\rm b}$.
By Lemma \ref{lemma:precursor_2} we can find pairwise orthogonal $p'_1, \hdots,p'_k \in A_+^1$ such that 
\begin{align}
\| p'_i- {p'_i}^2 \|_{2,u} &< \zeta\text{ for }i=1, \hdots, k,\label{eq:precursor_under_q_projection}\\
\| [p'_i,t] \|_{2,u} &< \zeta \text{ for }t \in T, \label{eq:precursor_under_q_central}\\
 \sup_{\tau \in T(A)} \Big| \tau\big( \sum_{i=1}^k p'_i \big) - \frac{1}{k} \Big| &< \frac{1}{4}\zeta, \label{eq:precursor_under_q_sum}\\
\tau(a_ip_i') &< \delta \tau(p'_i) + \zeta \text{ for } i=1, \hdots, k, \, \tau \in T(A), \text{ and}\label{eq:precursor_under_q_delta_inequality}\\
\max_{g \in F} \sum_{i=1}^k \| \alpha_g(p_i')-p_i' \|_{2,\tau}^2 &<\frac{2\sqrt{\e}}{k}\text{ for } \tau \in T(A). \label{eq:precursor_under_q_invariance}
\end{align}
Since $q \in A^{\tilde{\omega},\mathrm{b}} \cap A'$, it follows for each $\tau \in T_{\tilde{\omega}}(A)$ and $g \in G$ that the assignment
\[
A \rightarrow \C: a \mapsto \tau(a\alpha_g^{\tilde{\omega}}(q))/\tau(\alpha_g^{\tilde{\omega}}(q)) =\tau(a\alpha_g^{\tilde{\omega}}(q))/\mu
\]
defines a tracial state on $A$.
In particular, we get that for $\tau \in T_{\tilde{\omega}}(A)$ the following hold:
\begin{align}
\Big| \tau\big( \sum_{i=1}^k p'_i\alpha_g^{\tilde{\omega}}(q) \big) - \frac{\mu}{k} \Big| \overset{\eqref{eq:precursor_under_q_sum}}&{<} \frac{1}{4}\mu\zeta \leq \frac{1}{4}\zeta \text { for } g \in G,   \label{eq:precursor_under_q_main_terms}\\
\tau(a_ip_i'q) \overset{\eqref{eq:precursor_under_q_delta_inequality}}&{<} \delta\tau(p_i'q) + \mu\zeta \leq  \delta\tau(p_i'q) + \zeta, \text{ and} \label{eq:precursor_under_q_delta_inequality_q}\\
\max_{g \in F} \sum_{i=1}^k \| q(\alpha_g(p_i')-p_i') \|_{2,\tau}^2 \overset{\eqref{eq:precursor_under_q_invariance}}&{<} \frac{2 \mu \sqrt{\e}}{k}. \label{eq:precursor_under_q_invariance1}
\end{align}
Next we show that for all $\tau \in T_{\tilde{\omega}}(A)$ and $g \in G$
\begin{equation}\label{eq:precursor_under_q_invariance2}\sum_{i=1}^k \| \alpha_g(p'_i)^{1/2}(\alpha^{\tilde{\omega}}_g(q) - q) \|_{2,\tau}^2 < \frac{1}{k} \|\alpha^{\tilde{\omega}}_g(q) - q \|_{2,u}^2 + \zeta.\end{equation}
 Fix $\tau \in T_{\tilde{\omega}}(A)$ and $g \in G$.
Assume $\tau(q \alpha_g^{\tilde{\omega}}(q)) > 0$. Then the map
\[A \rightarrow \C,\quad  a \mapsto \tau(aq\alpha_g^{\tilde{\omega}}(q))/\tau(q\alpha_g^{\tilde{\omega}}(q))\]
defines a tracial state on $A$.
This means that 
\begin{equation}\label{eq:precursor_under_q_mixed_terms} 
\Big| \tau\big( \sum_{i=1}^k\alpha_g(p'_i)q\alpha_g^{\tilde{\omega}}(q) \big) - \frac{1}{k}\tau(q\alpha_g^{\tilde{\omega}}(q)) \Big| \overset{\eqref{eq:precursor_under_q_sum}}{<} \frac{1}{4}\zeta \tau(q\alpha_g^{\tilde{\omega}}(q)) \leq \frac{1}{4}\zeta.\end{equation}
Note that if $\tau(q \alpha_g^{\tilde{\omega}}(q)) = 0$, then 
\[
\tau\big( \sum_{i=1}^k \alpha_g(p'_i)q\alpha_g^{\tilde{\omega}}(q) \big) = 0 = \frac{1}{k} \tau(q \alpha_g^{\tilde{\omega}}(q))\] and hence, 
\eqref{eq:precursor_under_q_mixed_terms} also holds in this case. 
Now
\begin{align*}
\tau\big( \sum_{i=1}^k \alpha_g(p'_i)(\alpha_g^{\tilde{\omega}}(q) - q)^2 \big) &= \tau\Big( \sum_{i=1}^k \alpha_g(p'_i) \big( \alpha_g^{\tilde{\omega}}(q) + q - \alpha_g^{\tilde{\omega}}(q) q - q \alpha_g^{\tilde{\omega}}(q) \big) \Big)\\
 \overset{\eqref{eq:precursor_under_q_main_terms},\eqref{eq:precursor_under_q_mixed_terms}}&{<} \frac{1}{k}\tau(\alpha_g^{\tilde{\omega}}(q) + q - \alpha_g^{\tilde{\omega}}(q) q - q \alpha_g^{\tilde{\omega}}(q)) + \zeta\\
&= \frac{1}{k}\tau((\alpha_g^{\tilde{\omega}}(q) - q)^2) +\zeta.
\end{align*}
This proves \eqref{eq:precursor_under_q_invariance2}. 
If we combine this with equations \eqref{eq:precursor_under_q_projection}, \eqref{eq:precursor_under_q_central} and \eqref{eq:precursor_under_q_main_terms}--\eqref{eq:precursor_under_q_invariance1}, we can conclude that for some $q_S$ in the net, equations \eqref{eq:precursor_under_q_contradiction_1} -- \eqref{eq:precursor_under_q_contradiction_last} must hold. This gives the desired contradiction.
\end{proof}

\begin{proof}[Proof of Lemma \ref{lemma:equivariant_CPOU}]
Given $\e >0$ and $t \in (0,1)$, choose $\eta >0$ sufficiently small such that 
\begin{equation}\label{eq:choice_eta} 4\left\lceil \frac{t}{1-t} \right\rceil \sqrt{\eta} < \e.\end{equation}
We show that such a constant $\eta$ satisfies the required properties. 
Let $\alpha:G \acts A$, finite sets $F,H \ssubset G$, and $S \subset A^{\omega,\rm b}$ be given as in the statement of the lemma. By Remark \ref{remark:reduction_constant_elements} it suffices to consider $\delta >0$ and $a_1, \hdots, a_k \in A_+$ such that
\[\frac{\delta}{|H|} > \sup_{\tau \in T(A)}\min_{i = 1, \hdots, k} \tau(a_i).\]

We construct the pairwise orthogonal projections $p_1, \hdots, p_k$ in $N:= k \lceil \frac{t}{1-t}\rceil$ steps. 
Define a sequence $(s_n)$ in $[0,1)$ inductively by setting $s_0 = 0$ and setting $s_{i+1} = s_i + \frac{1}{k}(1-s_i)$.
Note that when $s < t$, then $s+\frac{1-s}{k} > s+\frac{1-t}{k}$.
If we assumed for a moment that $s_N<t$, then this sequence is less than $t$ for all of the first $N$ steps, leading to $s_N > N\frac{1-t}{k}\geq t$, which is a contradiction, hence we must have $s_N >t$.
Next, we construct separable subsets $S_0, \hdots, S_N$ as follows:
Set $S_N := S$.
Given $i\in\{1,\dots,N\}$ such that $S_i$ is defined, let $S_{i-1}$ be the union of $S_i$ and the set determined by Lemma \ref{lemma:precursor_final} with $1-s_{i-1}$ in place of $\mu$ and $S_{i}$ in place of $S_0$. 

In the initial step, we set $p_1^{(0)}= \hdots = p_k^{(0)}=0$.
Now suppose that for some $n \in \N$ we have pairwise orthogonal projections $p_1^{(n)}, \hdots, p_k^{(n)} \in A^{\omega,\rm b} \cap S_n'$ such that for all $\tau \in T_\omega(A)$:
\begingroup
\allowdisplaybreaks
\begin{align}
\tau(p_1^{(n)} + \hdots + p_k^{(n)}) &= s_n,\label{eq:sum_n}\\
\tau(a_ip_i^{(n)}) &\leq \delta \tau(p_i^{(n)}) \text{ for } i=1, \hdots, k, \label{eq:delta_inequality_n}\\
\max_{g \in F}\sum_{i=1}^k\|\alpha^\omega_g(p_i^{(n)})-p_i^{(n)}\|_{2,\tau}^2 &\leq \frac{4n\sqrt{\eta}}{k}, \text{ and} \label{eq:sum_invariances_n} \\
\max_{g \in F} \big\| \sum_{i=1}^k\alpha^\omega_g(p_i^{(n)})-\sum_{i=1}^kp_i^{(n)} \big\|_{2,T_\omega(A)}^2 &\leq 2 s_n \sqrt{\eta}.\label{eq:invariance_sum_n}
\end{align}
\endgroup
Note that the $p_i^{(0)}$ trivially satisfy equations \eqref{eq:sum_n}--\eqref{eq:invariance_sum_n}.
We show that we can construct pairwise orthogonal projections $p_1^{(n+1)}, \hdots, p_k^{(n+1)} \in A^{\omega,\rm b} \cap S_{n+1}'$ such that for all $\tau \in T_\omega(A)$:
\begin{align}
\tau(p_1^{(n+1)} + \hdots + p_k^{(n+1)}) &= s_{n+1}, \label{eq:sum_n+1}\\
\tau(a_ip_i^{(n+1)}) &\leq \delta \tau(p_i^{(n+1)}) \text{ for } i=1, \hdots, k, \label{eq:delta_inequality_n+1}\\
\max_{g \in F}\sum_{i=1}^k\|\alpha^\omega_g(p_i^{(n+1)})-p_i^{(n+1)}\|_{2,\tau}^2 &\leq \frac{4(n+1)\sqrt{\eta}}{k}, \text{ and } \label{eq:sum_invariances_n+1}\\
\max_{g \in F} \big\| \sum_{i=1}^k\alpha^\omega_g(p_i^{(n+1)})-\sum_{i=1}^k p_i^{(n+1)} \big\|_{2,T_\omega(A)}^2 &\leq 2s_{n+1}\sqrt{\eta}.\label{eq:invariance_sum_n+1}
\end{align}
Define $q:= 1_{A^{\omega,\rm b}}- \sum_{i=1}^k p_i^{(n)}$.
Note that $q$ is a projection in $A^{\omega,\rm b} \cap S_n'$ with $\tau(q) = 1-s_n$ for all $\tau \in T_\omega(A)$.
By Lemma \ref{lemma:precursor_final} and our choice of $S_n$ we can find pairwise orthogonal projections $r_1, \hdots, r_k \in A^{\omega,\rm b} \cap S_{n+1}' \cap \{\alpha_g^\omega(q)\mid g \in G\}'$ such that for all $\tau \in T_\omega(A)$:
\begingroup
\allowdisplaybreaks
\begin{align}
\tau(\sum_{i=1}^k r_iq) &= \frac{1-s_n}{k}, \label{eq:sum_leftover}\\
\tau(a_ir_iq) &\leq \delta \tau(r_iq) \text{ for } i=1, \hdots, k, \label{eq:delta_inequality_leftover}\\
\max_{g \in F}\sum_{i=1}^k\|q(\alpha^\omega_g(r_i)-r_i)\|_{2,\tau}^2 &\leq \frac{2(1-s_n)\sqrt{\eta}}{k}, \text{ and} \label{eq:sum_invariances_leftover}\\
\sum_{i=1}^k\|\alpha_g^\omega(r_i)(\alpha_g^\omega(q) - q)\|_{2,\tau}^2 &\leq \frac{1}{k}\| \alpha_g^\omega(q) - q\|_{2,T_\omega(A)}^2 \text{ for } g \in G.\label{eq:sum_invariances_leftover_2}
 \end{align}
\endgroup
Define
\[
p^{(n+1)}_i := p^{(n)}_i + qr_i.
\]
By construction (recall that $S_{n+1}\subset S_n$), the elements $p_1^{(n+1)}, \hdots, p_k^{(n+1)}$ are pairwise orthogonal projections in $ A^{\omega,\rm b} \cap S_{n+1}'$.
We show that they satisfy equations \eqref{eq:sum_n+1}--\eqref{eq:invariance_sum_n+1}.
Fix $\tau \in T_\omega (A)$.
Note first of all that
\[
\tau\big( \sum_{i=1}^k p_i^{(n+1)} \big) = \tau\big( \sum_{i=1}^k p_i^{(n)} \big) + \tau\big( \sum_{i=1}^k qr_i \big) \overset{\eqref{eq:sum_n}, \eqref{eq:sum_leftover}}{=} s_n + \frac{1}{k}(1-s_n)=s_{n+1}.
\]
Moreover, for $i=1, \hdots, k$ we have
\begin{align*}
\tau(a_ip_i^{(n+1)}) & = \tau(a_ip_i^{(n)}) + \tau(a_iqr_i)\\
\overset{\eqref{eq:delta_inequality_n},\eqref{eq:delta_inequality_leftover}}&{\leq}\delta \tau(p_i^{(n)}) + \delta \tau(qr_i) = \delta \tau(p_i^{(n+1)}).
\end{align*}
This shows already that the projections satisfy \eqref{eq:sum_n+1} and \eqref{eq:delta_inequality_n+1}. Next we prove that they satisfy \eqref{eq:sum_invariances_n+1}.  Note that $p_i^{(n)}$ is orthogonal to $q$ for $i=1, \hdots, k$. Hence, we get for each $g \in G$ and $\tau \in T_\omega(A)$ that
\begingroup
\allowdisplaybreaks
\begin{align}
&\sum_{i=1}^k \| \alpha^\omega_g(p_i^{(n+1)})-p_i^{(n+1)} \|_{2,\tau}^2 \nonumber\\
 &= \sum_{i=1}^k \tau\big( (\alpha^\omega_g(p_i^{(n+1)})-p_i^{(n+1)})^2 \big) \nonumber\\
&= \sum_{i=1}^k \Big( \tau\big( (\alpha^\omega_g(p_i^{(n)})-p_i^{(n)})^2 \big) + \tau\big( (\alpha^\omega_g(q r_i)- qr_i)^2 \big) -2 \tau\big( \alpha_g^\omega(p_i^{(n)})qr_i \big) - 2 \tau\big( p_i^{(n)}\alpha_g^\omega(qr_i) \big) \Big) \nonumber\\
&\leq  \sum_{i=1}^k \| \alpha^\omega_g(p_i^{(n)})-p_i^{(n)} \|_{2,\tau}^2 + \sum_{i=1}^k \|\alpha^\omega_g(q r_i)- qr_i \|_{2,\tau}^2.\label{eq:calculation_sum_invariances_n+1}
\end{align}
\endgroup
For $i=1, \hdots, k$ we have
\begin{align*}
\|\alpha^\omega_g(q r_i)- qr_i\|_{2,\tau}^2 & \leq \big( \|(\alpha^\omega_g(q)- q)\alpha_g^\omega(r_i)\|_{2,\tau}+ \|q(\alpha^\omega_g(r_i)- r_i)\|_{2,\tau} \big)^2\\
& \leq  2\big( \|(\alpha^\omega_g(q)- q)\alpha_g^\omega(r_i)\|_{2,\tau}^2 + \|q(\alpha^\omega_g(r_i)- r_i)\|_{2,\tau}^2 \big).
\end{align*}
Combining this with \eqref{eq:calculation_sum_invariances_n+1} we find that for $g \in F$
\begingroup
\allowdisplaybreaks
\begin{align*}
&\sum_{i=1}^k\|\alpha^\omega_g(p_i^{(n+1)})-p_i^{(n+1)}\|_{2,\tau}^2\\
 &\leq \sum_{i=1}^k \|\alpha^\omega_g(p_i^{(n)})-p_i^{(n)}\|_{2,\tau}^2 + 2\sum_{i=1}^k \left( \|(\alpha^\omega_g(q)- q)\alpha_g^\omega(r_i)\|_{2,\tau}^2 + \|q(\alpha^\omega_g(r_i)- r_i)\|_{2,\tau}^2\right)\\
 \overset{\eqref{eq:sum_invariances_n},\eqref{eq:sum_invariances_leftover_2}}&{\leq} \frac{4n\sqrt{\eta}}{k} + \frac{2}{k} \Big\| \sum_{i=1}^k\alpha^\omega_g(p_i^{(n)})- \sum_{i=1}^k p_i^{(n)} \Big\|^2_{2,T_\omega(A)}  + 2\sum_{i=1}^k\|q(\alpha^\omega_g(r_i)- r_i)\|_{2,\tau}^2 \\
 \overset{\eqref{eq:invariance_sum_n},\eqref{eq:sum_invariances_leftover}}&{\leq} \frac{4n\sqrt{\eta}}{k} +  \frac{4s_n\sqrt{\eta}}{k} + \frac{4(1-s_n)\sqrt{\eta}}{k} \\ 
 &= \frac{4(n+1)\sqrt{\eta}}{k}.
\end{align*}
\endgroup
Lastly, we show that the elements $p_i^{(n+1)}$ satisfy \eqref{eq:invariance_sum_n+1}.
We get for each $g \in G$ that
\begingroup
\allowdisplaybreaks
\begin{align*} &\sum_{i=1}^k ( \alpha^\omega_g(p_i^{(n+1)}) - p_i^{(n+1)} ) \\ &= \sum_{i=1}^k (\alpha^\omega_g(p_i^{(n)}) - p_i^{(n)}) + \alpha^\omega_g\Big( q\Big(\sum_{j=1}^k r_j\Big) \Big) - q\Big(\sum_{j=1}^k r_j\Big) \\
&= q-\alpha_g^\omega(q)+ \alpha^\omega_g\Big( q\Big(\sum_{j=1}^k r_j\Big) \Big) - q\Big(\sum_{j=1}^k r_j\Big) \\
&= q\Big(1-\sum_{j=1}^k r_j\Big) - \alpha^\omega_g\Big( q\Big(1-\sum_{j=1}^k r_j\Big) \Big) \\
&= q\Big(\sum_{j=1}^k ( \alpha^\omega_g(r_j) - r_j) \Big) + q\Big(1-\sum_{j=1}^k \alpha^\omega_g(r_j) \Big) - \alpha^\omega_g\Big( q\Big(1-\sum_{j=1}^k r_j\Big) \Big) \\
&= q\Big(\sum_{j=1}^k ( \alpha^\omega_g(r_j) - r_j) \Big) + (q-\alpha^\omega_g(q))\Big(1-\sum_{j=1}^k \alpha^\omega_g(r_j) \Big).
\end{align*}
\endgroup
Keeping in mind that $q$ is a projection and the elements $r_j$ are pairwise orthogonal projections, we have for all $g \in F$ and $\tau \in T_\omega(A)$ that 
\begingroup
\allowdisplaybreaks
\begin{align*}
 &\Big\| \sum_{i=1}^k\alpha^\omega_g(p_i^{(n+1)})-\sum_{i=1}^k p_i^{(n+1)} \Big\|_{2,\tau}^2  \\
&= \tau\left( \Big( \sum_{i=1}^k \alpha_g^\omega(p_i^{(n+1)}) - \sum_{i=1}^k p_i^{(n+1)} \Big)^2 \right)\\
&= \tau\left( \Big( \alpha_g^\omega(p_i^{(n)}+qr_i) - \sum_{i=1}^k p_i^{(n)} + qr_i \Big)^2 \right) \\
&= \tau\left( \Big( \sum_{i=1}^k q \big( \alpha_g^{\omega}(r_i)-r_i \big) + (q-\alpha_g^\omega(q))\big(1-\alpha_g^\omega(r_i)\big) \Big)^2 \right) \\
&=\tau\Bigg( q \Big( \sum_{i,j=1}^k \alpha^\omega_g(r_i r_j) + r_i r_j - r_i \alpha^\omega_g(r_j) - \alpha_g^\omega(r_i)r_j \Big) + (q-\alpha^\omega_g(q))^2\Big(1-\sum_{j=1}^k \alpha_g^\omega(r_j) \Big) \Bigg)\\
&\qquad - 2\tau\Bigg(\Big(\sum_{j=1}^k r_j \Big)q(1-\alpha_g^\omega(q))\Big(1-\sum_{j=1}^k \alpha^\omega_g(r_j) \Big)\Bigg)\\
&\leq  \tau\Bigg( q \Big( \sum_{i,j=1}^k \alpha^\omega_g(r_i r_j) + r_i r_j - r_i \alpha^\omega_g(r_j) - \alpha_g^\omega(r_i)r_j \Big) + (q-\alpha^\omega_g(q))^2 \Bigg)\\
&\leq  \tau\Bigg( q \Big( \sum_{i=1}^k \alpha^\omega_g(r_i) + r_i - r_i \alpha^\omega_g(r_i) - \alpha_g^\omega(r_i)r_i \Big) + (q-\alpha^\omega_g(q))^2 \Bigg)\\
&= \tau\Bigg( q \Big( \sum_{i=1}^k (\alpha^\omega_g(r_i) - r_i)^2 \Big)  + (q-\alpha^\omega_g(q))^2 \Bigg)\\
&= \sum_{i=1}^k \|q(r_i - \alpha_g^\omega(r_i)) \|_{2,\tau}^2 + \| q-\alpha^\omega_g(q) \|_{2,\tau}^2 \\
\overset{\eqref{eq:invariance_sum_n},\eqref{eq:sum_invariances_leftover}}&{\leq} 2(s_n+\frac{1}{k}(1-s_n))\sqrt{\eta} =2 s_{n+1}\sqrt{\eta}.
\end{align*}
\endgroup
In the first inequality in the above computation, we used the fact that $q$ commutes with all the elements $r_i$, thus the term $\Big(\sum_{j=1}^kr_j \Big)q(1-\alpha_g^\omega(q))\Big(1-\sum_{j=1}^k \alpha^\omega_g(r_j)\Big)$ is a product of two positive elements, whose trace value must be nonnegative.
In the second inequality we used the pairwise orthogonality of the projections $r_i$, so the mixed terms in the double sum appearing above contribute the trace value of $-(r_i\alpha_g^{\omega}(r_j)+\alpha_g^\omega(r_i)r_j)$, which likewise is nonpositive.

As explained before, one has $s_N > t$.
So if we start the inductive procedure with the projections $p_1^{(0)}= \hdots= p_k^{(0)}=0$, then after $N$ steps we obtain projections $p_1^{(N)}, \hdots, p_k^{(N)}$ satisfying
\[\sum_{i=1}^k \tau(p_i^{(N)}) > t \quad \text{for } \tau \in T_\omega(A).\]
Moreover, at that point we have
\[\tau(a_ip_i^{(N)}) \overset{\eqref{eq:delta_inequality_n+1}}{\leq} \delta \tau(p_i^{(N)}) \text{ for } i=1, \hdots, k, \, \tau \in T_\omega(A), \text{ and}\]
\[\max_{g \in F}\sum_{i=1}^k\|\alpha^\omega_g(p_i^{(N)})-p_i^{(N)}\|_{2,\tau}^2
\overset{\eqref{eq:sum_invariances_n+1}}{\leq} \frac{4N\sqrt{\eta}}{k} \overset{\eqref{eq:choice_eta}}{<} \e.\]
We conclude that $p_1^{(N)}, \hdots, p_k^{(N)}$ satisfy all the required properties. 
\end{proof}


\section{Dynamical tracial local-to-global principle}
In this section we prove our main technical result, namely that equivariant property Gamma implies a \emph{tracial local-to-global principle} for actions of amenable groups.
Roughly, this means that whenever a $*$-polynomial identity has (local) approximate solutions one tracial presentation at a time, then it has (global) approximate solutions in the uniform tracial 2-norm.
We begin by making precise what we mean by these polynomial identities:

\begin{definition}[{cf.\ \cite[Definition 4.4]{GardellaHirshbergVaccaro}}]
Let $G$ be a discrete group and let $X$ be a countable set of non-commutative variables. A \emph{non-commutative $G$-$*$-polynomial} in the variables $X$ is a non-commutative $*$-polynomial in the variables $\{g\cdot x \mid g \in G, x \in X\}$. 

Let $A$ be a \cstar-algebra with action $\alpha:G \acts A$. 
Suppose that $h(x_1, \hdots, x_r)$ is a $G$-$*$-polynomial in $r$ non-commuting variables. Given a tuple $(a_1, \hdots, a_r) \in A^r$, the evaluation $h(a_1, \hdots, a_r)$ is computed by interpreting $g \cdot x_i$ as $\alpha_g(a_i)$ for $g \in G$ and $i=1, \hdots, r$. 
\end{definition}

The main theorem that we prove in this section is the following:
\begin{theorem}
\label{theorem:local-to-global}
Let $A$ be a $\sigma$-unital \cstar-algebra with $T(A)$ non-empty and compact, and with weak \textup{CPoU}.
Let $\alpha:G \acts A$ be an action by an amenable countable discrete group and assume it has local equivariant property Gamma w.r.t.\ bounded traces.
For each $m \in \N$, let
\[
h_m(x_1, \hdots, x_{r_m}, z_1, \hdots, z_{s_m})
\]
be a $G$-$*$-polynomial in $r_m + s_m$ non-commuting variables.
Let $(a_i)_{i \in \N}$ be a sequence in $A^{\omega,\rm b}$.
Suppose for every $\e >0$, $\ell \in \N$ and $\tau \in \overline{T_\omega(A)}^{w^*}$, there exist contractions $(y_i^\tau)_{i \in \N}$ in  $A^{\omega,\rm b}$ such that 
\[
\| h_m(a_1, \hdots, a_{r_m}, y_1^\tau, \hdots, y_{s_m}^\tau) \|_{2,\tau} < \e \quad \text{ for } m =1, \hdots, \ell.
\]
Then there exist contractions $(y_i)_{i \in \N}$ in $A^{\omega, \rm b}$  such that 
\begin{equation}\label{eq:uniform_solutions}
h_m(a_1, \hdots, a_{r_m}, y_1, \hdots, y_{s_m})=0 \quad \text{for all } m \in \N.
\end{equation}
\end{theorem}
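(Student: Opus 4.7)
The plan is to adapt the non-equivariant tracial local-to-global argument of \cite[Lemma 4.1]{CETWW21}, replacing ordinary CPoU by the dynamical CPoU of Lemma \ref{lemma:equivariant_CPOU} and controlling the extra error that arises from the only approximate equivariance of the patching projections. A standard saturation argument in the tracial ultrapower reduces the statement to the following approximate form: for every $\e>0$ and $\ell\in\N$ there exist contractions $y_1,\dots,y_{s_\ell}\in A^{\omega,\rm b}$ with $\|h_m(a,y)\|_{2,T_\omega(A)}<\e$ for $m\le\ell$. From such approximate solutions a diagonal argument via Kirchberg's $\e$-test \cite[Lemma 3.1]{KirchbergRordam14}, combined with the fact that $\|\cdot\|_{2,T_\omega(A)}$ is a genuine norm on $A^{\omega,\rm b}$, produces a single sequence $(y_i)_{i\in\N}$ realising the exact equalities in \eqref{eq:uniform_solutions}.

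Fix $\e,\ell$ and let $F\ssubset G$ be the finite set of group elements appearing in $h_1,\dots,h_\ell$. For each $\tau\in K:=\overline{T_\omega(A)}^{w^*}$, the hypothesis provides contractions $(y_i^\tau)$ with $\|h_m(a,y^\tau)\|_{2,\tau}$ small. The elements $b^\tau:=\sum_{m=1}^\ell|h_m(a,y^\tau)|^2$ give rise to weak-$*$ continuous functions $\sigma\mapsto\sigma(b^\tau)$ on $K$, so extracting a finite subcover $U_{\tau_1},\dots,U_{\tau_k}$ of $K$ by the associated sublevel sets and writing $b_i:=b^{\tau_i}$ yields $\min_i\tau(b_i)<\ell\e^2/2$ for every $\tau\in K$. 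I would then apply Lemma \ref{lemma:equivariant_CPOU} to the tuple $(b_i)$, with $S$ a separable subset containing every $\alpha^\omega_g$-shift ($g\in F$) of the $a_j$ and of the $y_j^{\tau_i}$, with $t$ fixed (say $t=1/2$), with an amenable F{\o}lner set $H\ssubset G$ meeting the condition at parameter $\eta(\e_1,t)$ for an auxiliary $\e_1>0$, and with $\delta>|H|\ell\e^2/2$. This produces pairwise orthogonal projections $p_1,\dots,p_k\in A^{\omega,\rm b}\cap S'$ with $\tau(\sum p_i)>t$, $\tau(b_ip_i)\le\delta\tau(p_i)$, and $\max_{g\in F}\sum_i\|\alpha^\omega_g(p_i)-p_i\|_{2,\tau}^2<\e_1$ for every $\tau\in T_\omega(A)$.

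The patched candidate is $y_j:=\sum_ip_iy_j^{\tau_i}$, which is a contraction by the pairwise orthogonality of the $p_i$. Since each $p_i$ commutes with every element of $S$, a monomial-by-monomial substitution in which each occurrence of $\alpha^\omega_g(y_j)=\sum_i\alpha^\omega_g(p_i)\alpha^\omega_g(y_j^{\tau_i})$ is rewritten as $\sum_ip_i\alpha^\omega_g(y_j^{\tau_i})$ at the cost of a controlled $L^2$-error leads to
\begin{equation*}
h_m(a,y)=\sum_ip_ih_m(a,y^{\tau_i})+R_m,
\end{equation*}
where $\|R_m\|_{2,\tau}\lesssim\sqrt{k\e_1}$ after applying Cauchy--Schwarz to the bound \eqref{eq:sum_invariance_theorem}. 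The main term, by orthogonality and commutation, satisfies $\|\sum_ip_ih_m(a,y^{\tau_i})\|_{2,\tau}^2=\sum_i\tau(p_i|h_m(a,y^{\tau_i})|^2)\le\sum_i\tau(p_ib_i)\le\delta$, so that $\|h_m(a,y)\|_{2,T_\omega(A)}^2\lesssim\delta+k\e_1$. Notably, the failure of the identity $\sum_ip_i=1$ introduces no slack here, because the displayed identity already captures all of $h_m(a,y)$; this is why $t$ need not be taken close to $1$.

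The main obstacle I expect is precisely this polynomial bookkeeping: each replacement $\alpha^\omega_g(p_i)\mapsto p_i$ injects an $L^2$-error that after Cauchy--Schwarz scales like $\sqrt{k\e_1}$, and this has to be tracked uniformly in $\tau\in T_\omega(A)$ across every monomial of every $h_m$ and every $g\in F$. It is precisely to enable this bound that Lemma \ref{lemma:equivariant_CPOU} is stated in terms of the sum $\sum_i\|\alpha^\omega_g(p_i)-p_i\|_{2,\tau}^2$ rather than in terms of individual terms. A secondary subtlety is the coupled parameter choice --- the covering size $k$ depends on the local-hypothesis parameter $\e$, the F{\o}lner size $|H|$ depends on $\e_1$ which must be small in $k$, and $\delta$ depends on $|H|$ and $\e$ --- but the freedom in the local hypothesis to produce $\|h_m(a,y^\tau)\|_{2,\tau}$ arbitrarily small is what ultimately allows one to drive the final bound $\sqrt{\delta}+\sqrt{k\e_1}$ below any prescribed threshold.
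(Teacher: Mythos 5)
Your overall architecture matches the paper's (reduce to approximate solutions via Kirchberg's $\e$-test, cover $\overline{T_\omega(A)}^{w^*}$ by finitely many traces using the elements $b^\tau$, apply Lemma \ref{lemma:equivariant_CPOU}, and patch via $y_j=\sum_i p_i y_j^{\tau_i}$), but the error analysis has two genuine gaps. The first is your claim that ``the failure of the identity $\sum_i p_i=1$ introduces no slack'' and that $t=1/2$ suffices. Any monomial of $h_m$ involving only the coefficients $a_j$ and none of the variables $z_j$ (a ``constant term'') is unchanged by the substitution $y_j\mapsto\sum_i p_i y_j^{\tau_i}$, whereas your proposed main term $\sum_i p_i h_m(a,y^{\tau_i})$ multiplies that constant term by $\sum_i p_i$. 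Already for $h_m(x_1,z_1)=z_1-x_1$ the discrepancy is exactly $\big(1-\sum_i p_i\big)a_1$, with $\big\|\big(1-\sum_i p_i\big)a_1\big\|_{2,\tau}^2=\tau\big(\big(1-\sum_i p_i\big)|a_1|^2\big)$, which is controlled only by $1-t$ and is of order $1/2$ for your choice of $t$. Such polynomials are unavoidable in the intended applications (e.g.\ the relations $\varphi(b)-X_b$ in Theorem \ref{theorem:reformulation_local_to_global}). This is precisely why the paper takes $1-t<\e^2/2$, why Lemma \ref{lemma:equivariant_CPOU} must bootstrap the trace of $\sum_i p_i$ above an arbitrary $t<1$, and why the reduction Lemma \ref{lemma:simplify_polynomials} isolates constant terms into polynomials of the form $\|c(x)\|z-c(x)$ so that this $1-t$ error can be tracked separately.

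The second gap is the bound $\|R_m\|_{2,\tau}\lesssim\sqrt{k\e_1}$, which is circular in the parameters: $k$ is produced by the finite-subcover step, which can only happen after $H$ (hence after $\eta$, hence after $\e_1$) has been fixed, so $\e_1$ cannot be chosen small relative to $k$; and shrinking the local tolerance in the hypothesis does not shrink $k$. The paper's estimate avoids any factor of $k$: for sums $\sum_i(\alpha^\omega_g(p_i)-p_i)c_i$ with the $c_i$ positive contractions commuting with all the $p_j$ and $\alpha^\omega_g(p_j)$, the cross terms are traces of negatives of positive elements (by pairwise orthogonality of the projections), yielding $\big\|\sum_i(\alpha^\omega_g(p_i)-p_i)c_i\big\|_{2,\tau}^2\le\sum_i\|\alpha^\omega_g(p_i)-p_i\|_{2,\tau}^2<\delta^2$ independently of $k$. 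Estimating term by term and applying Cauchy--Schwarz, as you propose, discards exactly the feature of the CPoU conclusion (control of the $\ell^2$-sum rather than of individual terms) that makes the patching converge.
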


In order to simplify the proof of this result we consider the following technical lemma, which reduces the complexity of the involved polynomials:

\begin{lemma}\label{lemma:simplify_polynomials}
Let $G$ be a countable discrete group.
Consider sets of variables $X = \{x_i\mid i \in \N\}$ and $Z =\{z_i \mid i \in \N\}$.
Assume that 
\[
\mathcal{P} = \{h_m(x_1, \hdots, x_{r_m}, z_1, \hdots, z_{s_m}) \mid m \in \N\}
\]
is a countable set of non-commutative $G$-$*$-polynomials in the variables $X \cup Z$.
Then there exists another set of variables $Z'=\{z'_i\mid i \in \N\}$ and another countable set of non-commutative $G$-$*$-polynomials
\[
\mathcal{P}' = \{h'_m(x_1, \hdots, x_{r'_m},z'_1, \hdots z'_{s'_m}) \mid m \in \N\}
\]
in the variables $X \cup Z'$ such that every $G$-$*$-polynomial $h'_m$ satisfies one of the following properties:
\begin{enumerate}[itemsep=1ex,topsep=1ex, leftmargin=*,label=\textup{(\arabic*)}]
\item $h'_m(x_1, \hdots, x_{r'_m},0, \hdots, 0) = 0 $ (i.e., no terms in the polynomial with variables only in $X$) and $h'_m(1, \hdots, 1, z'_1, \hdots, z'_{s'_m})$ is an ordinary $*$-polynomial in the variables $Z'$,\label{enum:no_actions_no_constant_terms}
\item $h'_m(x_1, \hdots, x_{r'_m}, z'_1, \hdots, z'_{s'_m})= \|h'_m(x_1, \hdots, x_{r'_m}, 0, \hdots, 0)\|z'_{s'_m} - h'_m(x_1, \hdots, x_{r'_m}, 0, \hdots, 0)$,\label{enum:constant_term}
\item $h'_m(x_1, \hdots, x_{r'_m},z'_1, \hdots, z'_{s'_m}) = g\cdot z'_i - z'_j $ for some $1 \leq i,j \leq s'_m$ and some $g \in G$.\label{enum:action}
\end{enumerate}
Moreover we have that for every action $\beta:G \acts B$ on any \cstar-algebra, any sequence $(b_i)_{i \in \N}$ in $B$ and subset $T \subset T(B)$ the following two statements hold:
\begin{enumerate}[label=\textup{(\alph*)}]
\item\label{equivalence_solutions_exact} There exist contractions $(y_i)_{i \in \N}$ in $B$ such that
\[\|h_m(b_1, \hdots, b_{r_m}, y_1, \hdots, y_{s_m})\|_{2,T} = 0 \quad \text{ for  all } m \in \N\]
if and only if there exist contractions $(y'_i)_{i \in \N}$ in $B$ such that
\[\|h'_m(b_1, \hdots, b_{r'_m}, y'_1, \hdots, y'_{s'_m})\|_{2,T} = 0 \quad \text{ for all } m \in \N.\]
\item\label{equivalence_solutions_approximate} For each $\e >0$ and each $\ell \in \N$ there exist contractions  $(y_i)_{i \in \N}$ in $B$ such that
\[\|h_m(b_1, \hdots, b_{r_m}, y_1, \hdots, y_{s_m})\|_{2,T} < \e \quad \text{ for } m = 1 , \hdots, \ell \]
if and only if for each $\e' >0$ and each $\ell' \in \N$ there exist contractions $(y'_i)_{i \in \N}$ in $B$ such that
\[\|h'_m(b_1, \hdots, b_{r'_m}, y'_1, \hdots, y'_{s'_m})\|_{2,T} < \e' \quad \text{ for } m=1, \hdots, \ell'.\]
\end{enumerate}
\end{lemma}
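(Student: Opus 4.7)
The plan is to construct $\mathcal{P}'$ by a single pass of substitution, introducing auxiliary $z'$-variables that abbreviate the two features forbidden by condition \ref{enum:no_actions_no_constant_terms}: nontrivial group translates of $z$-variables and pure $x$-sub-expressions. I first identify $Z$ with a subset of $Z'$, so each $z_j$ becomes $z'_j$. For every distinct sub-expression $g\cdot z'_j$ with $g\neq e_G$ appearing literally in some $h_m$, I introduce a fresh variable $w_{g,j}\in Z'$ and add the polynomial $g\cdot z'_j - w_{g,j}$ (of the form \ref{enum:action}) to $\mathcal{P}'$. After substituting $w_{g,j}$ for each $g\cdot z'_j$ inside each $h_m$, every monomial becomes a product of $z'$-variables (on which no group action is applied) alternating with ``$x$-blocks'' of the form $p = g_1\cdot x_{i_1}\cdots g_k\cdot x_{i_k}$. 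For every distinct $x$-block $p$ that appears, I introduce a fresh variable $u_p\in Z'$ and add the polynomial $\|p\|\,u_p - p$ (of the form \ref{enum:constant_term}), where $\|p\|$ denotes the universal norm of $p$ in the free \cstar-algebra generated by contractions carrying a $G$-action. Replacing every $x$-block $p$ inside $h_m$ by $\|p\|\, u_p$ yields a polynomial $h'_m$ in which every monomial contains at least one $z'$-variable and no group action is applied to any $z'$-variable, so $h'_m$ satisfies \ref{enum:no_actions_no_constant_terms}. Since each $h_m$ has finitely many monomials, only countably many variables and polynomials are added and $\mathcal{P}'$ remains countable.

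For both equivalences \ref{equivalence_solutions_exact} and \ref{equivalence_solutions_approximate}, the guiding idea is that the polynomials of types \ref{enum:constant_term} and \ref{enum:action} force the auxiliary $z'$-variables to be explicit functions of the original ones modulo $\|\cdot\|_{2,T}$. In the forward direction, given a solution $(y_i)$ of the original system, I extend by setting $y'_j := y_j$ on original indices, $y'_{w_{g,j}} := \beta_g(y_j)$, and $y'_{u_p} := \|p\|^{-1} p(b)$ (or $0$ if $\|p\|=0$); these are contractions because $\|p(b)\|\leq\|p\|$ for any contractive evaluation. By construction, the polynomials of types \ref{enum:constant_term} and \ref{enum:action} vanish exactly on $(b,y')$, while $h'_m$ evaluated at $y'$ coincides with $h_m$ evaluated at $y$, so all $\|\cdot\|_{2,T}$-values transfer directly. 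Conversely, given $(y'_i)$, I take $y_j := y'_j$ on original indices; the difference between $h_m(b,\ldots, y,\ldots)$ and $h'_m(b,\ldots, y',\ldots)$ decomposes as a finite sum, one term per ``corrected'' substitution, where each term replaces one occurrence of $w_{g,j}$ by $\beta_g(y_j)$ or of $\|p\|u_p$ by $p(b)$ inside a single monomial. The elementary inequality
\[
\|c(a-a')d\|_{2,\tau} \leq \|c\|\,\|d\|\,\|a-a'\|_{2,\tau}
\]
then shows that each such term is controlled in $\|\cdot\|_{2,T}$ by the $\|\cdot\|_{2,T}$-value of the corresponding auxiliary polynomial of type \ref{enum:constant_term} or \ref{enum:action}. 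Summing over the finitely many monomials gives \ref{equivalence_solutions_exact} at once, and the same quantitative bookkeeping gives \ref{equivalence_solutions_approximate}.

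No serious obstacle is expected; the lemma is essentially a normal-form reduction, and the only real task is the substitution scheme described above. The minor subtlety will be in \ref{equivalence_solutions_approximate}: I need to verify that the constants appearing in the error transfer depend only on the structural data of each $h_m$ (the number of monomials, the universal norms $\|p\|$ of its $x$-blocks, and the number of $z$-translates involved), and not on the solutions themselves, so that given $\e$ and $\ell$ a uniform choice of $\e'$ and $\ell'$ in terms of them suffices. This is automatic from the Cauchy--Schwarz bound above together with the fact that only finitely many auxiliary polynomials are needed to encode each $h_m$.
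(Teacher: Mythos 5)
Your handling of the group translates of the $z$-variables is exactly the paper's: introduce a fresh variable for each $g\cdot z_j$ with $g\neq e$, add the type-(3) relations $g\cdot z_j - w_{g,j}$, and transfer estimates back via the telescoping bound $\|c(a-a')d\|_{2,\tau}\leq\|c\|\,\|d\|\,\|a-a'\|_{2,\tau}$ (which, note, uses traciality for the right-hand factor). Where you diverge is the pure-$x$ data: the paper extracts only the single constant term $q_m(x):=h_m(x_1,\hdots,x_{r_m},0,\hdots,0)$ from each polynomial, adds one type-(2) relation $\|q_m\|w_m-q_m$, and replaces $q_m$ by $\|q_m\|w_m$ inside $h_m$; the $x$-blocks sitting inside mixed monomials are left untouched. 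That suffices because condition (1) does not forbid group elements acting on $x$-variables — it only demands that no pure-$x$ monomial survives and that setting all $x_i=1$ leaves an ordinary $*$-polynomial, i.e.\ that the action never hits a $z'$-variable. Your extraction of every $x$-block is therefore more work than necessary, though not wrong in principle.

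The one genuine gap sits inside that extra step. You normalize each block $p$ by its ``universal norm over contractions'' and set $y'_{u_p}:=\|p\|^{-1}p(b)$, asserting contractivity because ``$\|p(b)\|\leq\|p\|$ for any contractive evaluation.'' But the lemma imposes no norm bound on the sequence $(b_i)$, and in the intended application (Theorem \ref{theorem:reformulation_local_to_global}) the coefficients are $\varphi(b)$ for $b$ ranging over a dense $\Q[i]$-$*$-subalgebra, hence of arbitrary norm. So $y'_{u_p}$ need not be a contraction and the forward implications in (a) and (b) break down as written. The repair is what the paper does: the scalar in a type-(2) relation is the norm of the \emph{evaluated} element (this is how the self-referential placeholder $\|h'_m(x_1,\hdots,x_{r'_m},0,\hdots,0)\|$ is meant to be read; cf.\ the reduction to ``$z_i-a$ with $\|a\|=1$'' in the proof of Theorem \ref{theorem:local-to-global}), so that the witness $q_m(b)/\|q_m(b)\|$ is a contraction by construction, with the witness taken to be $0$ when $q_m(b)=0$. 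With that change — or by simply not decomposing the mixed monomials at all — the rest of your argument, including the quantitative bookkeeping for part (b), goes through.
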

\begin{proof}
Define $Z' =\{z_i\mid i \in \N\} \cup \{z_{i,g} \mid i \in \N, g \in G\setminus\{e\}\} \cup\{w_m \mid m \in \N\}$. For each $m \in \N$ we can take the $G$-$*$-polynomial $h_m$ in the variables $X \cup Z$ and define $h''_m$ in the variables $X \cup Z'$ by replacing every instance of a variable $g \cdot z_i$ for some $i \in \N$, $g \in G\setminus\{e\}$ by $z_{i,g}$, e.g.\ the polynomial $g \cdot z_1 - z_2$ would be transformed into $z_{1,g}- z_{2}$. Next, we define a new $G$-$*$-polynomial $h'''_m$ for each $m \in \N$ by setting
\begin{equation}\label{eq:changed_polynomial} h'''_m(X \cup Z') = h''_m (X \cup Z') + \|h_m(x_1, \hdots, x_{r_m}, 0, \hdots, 0)\|w_m - h_m(x_1, \hdots, x_{r_m}, 0, \hdots, 0).\end{equation}
Set $\mathcal{P}_1' :=\{h'''_m(X \cup Z') \mid m \in \N\}$. All the polynomials in this set are of the type \ref{enum:no_actions_no_constant_terms} mentioned in the statement of this lemma. 
Next, define the sets of $G$-$*$-polynomials
\[\mathcal{P}'_2 :=\{\|h_m(x_1, \hdots, x_{r_m}, 0, \hdots, 0)\|w_m - h_m(x_1, \hdots, x_{r_m}, 0, \hdots, 0)\mid m \in \N\},\]
and
\[\mathcal{P}'_3 :=\{g \cdot z_i - z_{i,g} \mid i \in \N, g \in G \setminus \{e\}\}.\]
These sets consist of polynomials of type \ref{enum:constant_term} and \ref{enum:action}, respectively. 

Consider $\mathcal{P}' = \mathcal{P}'_1 \cup \mathcal{P}'_2 \cup \mathcal{P}'_3$.
The $G$-$*$-polynomials in this set are all of the right form and we claim that this does the job.
It suffices to show part \ref{equivalence_solutions_exact}.
This is because for a given \cstar-algebra $B$, action $\beta:G \acts B$, sequence $(b_i)_{i\in \N} \in B$ and subset $T \subset T(B)$, statement \ref{equivalence_solutions_approximate} immediately follows from statement \ref{equivalence_solutions_exact} when applied to the \cstar-algebra $B_\omega$ with action $\beta_\omega$, sequence $(b_i)_{i \in \N}$ in $B \subset B_\omega$, and the set of limit traces on $B_\omega$ arising from sequences of traces in $T$. 

To show part \ref{equivalence_solutions_exact}, fix a \cstar-algebras $B$, an action $\beta: G \acts B$, a sequence $(b_i)_{i \in \N}$ and subset $T \subset T(B)$.
Let $(y_i)_{i \in \N}$ be any sequence of contractions in $B$.
For notational brevity, we denote these sequences by $\bar{b}=(b_i)_i$ and $\bar{y}=(y_i)_i$.
Furthermore we shall also write (exclusively in this proof) for two elements $x,y\in B$ the expression ``$x=_T y$'' as shorthand for $\|x-y\|_{2,T}=0$.

We set $y_{i,g}=\beta_g(y_i)$ for $i\in\N$ and $g\in G\setminus\{e\}$ and
\[
w_m=\begin{cases} 0 & ,\quad h_m(b_1, \hdots, b_{r_m}, 0, \hdots, 0)=0 \\
\frac{h_m(b_1, \hdots, b_{r_m}, 0, \hdots, 0)}{\|h_m(b_1, \hdots, b_{r_m}, 0, \hdots, 0)\|} &,\quad h_m(b_1, \hdots, b_{r_m}, 0, \hdots, 0)\neq 0.\end{cases}
\]
Then the tuple $\bar{z}:=(y_i)_{i\in\N}\times (y_{i,g})_{i\in\N, g\in G\setminus\{e\}}\times (w_m)_{m\in\N}$ represents a choice for the free variables of $Z'$ inside $B$.
By definition we have $p(\bar{b},\bar{z})=0$ for all $p\in \mathcal{P}_2'\cup\mathcal{P}_3'$.
By definition of the polynomials $h_m'''$, we have 
\[
h_m'''(b_1, \hdots, b_{r_m},\bar{z})=h_m''(b_1, \hdots, b_{r_m},\bar{z})+q(\bar{b},\bar{z})=h_m''(b_1, \hdots, b_{r_m},\bar{z})
\]
for some $*$-polynomial $q\in\mathcal{P}_2'$.
Due to the vanishing of all the $*$-polynomials of $\mathcal{P}_3'$ in $\bar{z}$ and given how the polynomial $h_m''$ arises from the polynomial $h_m$ via substituion of variables, we may finally observe
\[
h_m(b_1, \hdots, b_{r_m},y_1,\dots,y_{s_m})=h_m''(b_1, \hdots, b_{r_m},\bar{z}).
\]
This shows immediately that if the sequence $(y_i)_i$ satisfies
\[
\|h_m(b_1, \hdots, b_{r_m}, y_1, \hdots, y_{s_m})\|_{2,T} = 0 \quad \text{ for  all } m \in \N,
\]
then we also have $\| p(\bar{b},\bar{z})\|_{2,T}=0$ for all $p\in\mathcal{P}'$.
In particular, we get the ``only if'' part in \ref{equivalence_solutions_exact}.

Conversely, suppose that $\bar{z}:=(y_i)_{i\in\N}\times (y_{i,g})_{i\in\N, g\in G\setminus\{e\}}\times (w_m)_{m\in\N}$ is an arbitrary tuple with values in the unit ball of $B$ representing a choice for the free variables in $Z'$ such that $p(\bar{b},\bar{z})=0$ for all $p\in\mathcal{P}'$.
By doing the above computations in reverse, we can see that $p(\bar{z})=_T 0$ for $p\in\mathcal{P}'_3$ forces $y_{i,g}=_T \beta_g(y_i)$ for all $g\in G\setminus\{e\}$.
Moreover, the vanishing $p(\bar{b},\bar{z})=_T 0$ for $p\in\mathcal{P}'_2$ forces the equation $w_m=\frac{h_m(b_1, \hdots, b_{r_m}, 0, \hdots, 0)}{\|h_m(b_1, \hdots, b_{r_m}, 0, \hdots, 0)\|}$ when $h_m(b_1, \hdots, b_{r_m}, 0, \hdots, 0)\neq 0$.
Similar to how we argued above, this implies for all $m\geq 1$ that
\[
h_m'''(b_1, \hdots, b_{r_m},\bar{z}) = h_m''(b_1, \hdots, b_{r_m},\bar{z})+q(\bar{b},\bar{z}) =_T h_m''(b_1, \hdots, b_{r_m},\bar{z})
\]
for some $*$-polynomial $q\in\mathcal{P}_2'$.
Given how the polynomial $h_m''$ arises from the polynomial $h_m$ via substituion of variables, we may finally observe
\[
h_m(b_1, \hdots, b_{r_m},y_1,\dots,y_{s_m})=_T h_m''(b_1, \hdots, b_{r_m},\bar{z}),\quad m\geq 1.
\]
This shows the ``if'' part of \ref{equivalence_solutions_exact} and finishes the proof.
\end{proof}

\begin{proof}[Proof of Theorem \ref{theorem:local-to-global}]
Equation \eqref{eq:uniform_solutions} is equivalent to 
\[\|h_m(a_1, \hdots, a_{r_m}, y_1, \hdots, y_{s_m})\|_{2,T_\omega(A)} = 0 \quad \text{for all } m \in \N.\]
By the previous lemma, we may assume that the $*$-polynomials $h_m$ are all of one of the following three types:
\begin{enumerate}[itemsep=1ex,topsep=1ex]
\item $h_m(a_1, \hdots, a_{r_m}, 0, \hdots, 0) = 0$ and $h_m(1_{A^{\omega,\mathrm{b}}}, \hdots, 1_{A^{\omega,\mathrm{b}}},z_1, \hdots, z_{s_m})$ is an ordinary $*$-polynomial.
\item $h_m$ is of the form $\|h_m(x_1, \hdots, x_{r_m}, 0 , \hdots, 0)\|z_i - h_m(x_1, \hdots, x_{r_m}, 0 , \hdots, 0)$ for some $i \in \N$.
Equivalently, since this doesn't change the solutions, we may assume that $h_m(a_1, \hdots, a_{r_m}z_1, \hdots, z_{s_m})$ is of the form $z_i - a$ for some $i \in \N$ and $a \in \mathrm{C}^*\big( \{ \alpha_g^\omega(a_i) \mid i \in \N,\ g\in G\} \big)$ with $\|a\|=1$. 
\item $h_m$ is of the form $g\cdot z_i - z_i'$ for some $i, i'\in \N$ and $g \in G$.
\end{enumerate}

By Kirchberg's $\e$-test, it suffices to find for each $\e >0$ and $\ell \in \N$ contractions $(y_i)_{i \in \N}$ in $A^{\omega,\rm b}$ such that
\[
\|h_m(a_1, \hdots, a_{r_m}, y_1, \hdots, y_{s_m})\|_{2,\tau} \leq \e \quad \text{for } m =1, \hdots, \ell \text{ and } \tau \in T_\omega(A).
\]
Choose $\e>0$ and $\ell \in \N$ arbitrarily. Denote by $F \ssubset G$ the set of $g \in G$ appearing in $h_m$ for some $m=1, \hdots, \ell$. 
Set 
\begin{equation}\label{eq:choose_delta}\delta = \frac{\e^2}{18}.\end{equation}
Choose $t \in (0,1)$ such that 
\begin{equation}\label{eq:choose_t} 1-t < \e^2/2.\end{equation}
Let $\eta>0$ be the universal constant from Lemma \ref{lemma:equivariant_CPOU} corresponding to the tuple $(\delta^2,t)$.
  Since $G$ is amenable we can find $H \ssubset G$ such that $|gH \Delta H| < \eta|H|$ for each $g \in F$. 
By assumption, for each $\tau \in \overline{T_\omega(A)}^{w^*}$ we can find contractions $(y_i^\tau)_{i \in \N} \in A^{\omega,\rm b}$ such that
\[
\|h_m(a_1, \hdots, a_{r_m}, y_1^\tau, \hdots y_{s_m}^\tau)\|^2_{2,\tau} <  \frac{\e^2}{2\ell|H|}\quad \text{for } m=1, \hdots, \ell.
\]
Define
\begin{equation}\label{eq:definition_b}
b^\tau := \sum_{m=1}^\ell |h_m(a_1, \hdots, a_{r_m}, y_1^\tau, \hdots y_{s_m}^\tau)|^2 \in A^{\omega,\rm b}.
\end{equation}
Then we get
\[
\tau(b^\tau) = \sum_{m=1}^\ell \|h_m(a_1, \hdots, a_{r_m}, y_1^\tau, \hdots y_{s_m}^\tau)\|^2_{2,\tau}  < \frac{\e^2}{2|H|}.
\]
By continuity and compactness of $\overline{T_\omega(A)}^{w^*}$, we can find finitely many tracial states $\tau_1, \hdots, \tau_k \in \overline{T_\omega(A)}^{w^*}$ such that
\[ \frac{\e^2}{2|H|} > \sup_{\tau \in T_\omega(A)} \min_{i=1, \hdots, k} \tau(b^{\tau_i}). \]
By Lemma \ref{lemma:equivariant_CPOU}, it follows that we can find pairwise orthogonal projections
\[
p_1, \hdots, p_k \in A^{\omega,\rm b} \cap \Big( \bigcup_{g\in G} \alpha^\omega_g\big( \{y_i^{\tau_1}, \hdots, y_i^{\tau_n}, a_i \mid i \in \N\} \big) \Big)'
\]
such that for $\tau \in T_\omega(A)$
\begin{align}
\tau(\sum_{j=1}^k p_j) &> t, \label{eq:sum}\\
\tau(b^{\tau_j}p_j) &\leq \frac{\e^2}{2}\tau(p_j) \text{ for } j=1, \hdots, k, \text{ and } \label{eq:delta_inequality}\\
\max_{g \in F} \sum_{j=1}^k \|\alpha_g(p_j) - p_j\|^2_{2,\tau} &< \delta^2.\label{eq:invariance}
\end{align}
Define $y_i = \sum_{j=1}^k p_j y_i^{\tau_j}$ for $i \in\N$. 
We show that for $ \tau \in T_\omega(A)$ and $m=1, \hdots, \ell$
\begin{equation}\label{eq:bringing_projections_out}
\Big| \tau\big( |h_m(a_1, \hdots, a_{r_m}, y_1, \hdots, y_{s_m})|^2 \big) - \tau\Big( \sum_{j=1}^k p_j|h_m(a_1,\hdots,a_{r_m},y_1^{\tau_j}, \hdots y_{s_m}^{\tau_j})|^2 \Big) \Big| < \frac{\e^2}{2}.
\end{equation}
We distinguish three cases.
First, assume that $h_m(a_1, \hdots, a_{r_m}, 0, \hdots, 0) = 0$ and that $h_m(1_{A^{\omega,\mathrm{b}}}, \dots, 1_{A^{\omega,\mathrm{b}}},z_1, \dots, z_{s_m})$ is an ordinary $*$-polynomial.
In this case 
\[
|h_m(a_1, \hdots, a_{r_m}, y_1, \hdots, y_{s_m})|^2 = \sum_{j=1}^k p_j|h_m(a_1,\hdots,a_{r_m},y_1^{\tau_j}, \hdots y_{s_m}^{\tau_j})|^2
\]
since $p_1, \hdots, p_k$ are pairwise orthogonal projections.
Second, assume $h_m$ is of the form $z_i - a$ for some $i \in \N$ and some element $a \in \mathrm{C}^*(\{ \alpha^\omega_g(a_i) \mid i \in \N,\ g\in G\})$ with $\|a\|=1$. 
In this case we have
\begingroup
\allowdisplaybreaks
\begin{align*}
& \Big| \tau\big( |h_m(a_1, \hdots, a_{r_m}, y_1, \hdots, y_{s_m})|^2 \big) - \tau\Big( \sum_{j=1}^k p_j|h_m(a_1,\hdots,a_{r_m},y_1^{\tau_j}, \hdots y_{s_m}^{\tau_j})|^2 \Big) \Big| \\
& =  \Big| \tau\Big( \Big| \sum_{j=1}^k p_j y_i^{\tau_j} - a \Big|^2 \Big) - \tau\Big( \sum_{j=1}^k p_j |y_i^{\tau_j} - a|^2 \Big) \Big|\\
&= \Big| \tau(|a|^2) - \tau\Big( \sum_{j=1}^k p_j |a|^2 \Big) \Big|\\
&\leq  \tau\Big( 1 - \sum_{j=1}^k p_j \Big)\\
\overset{\eqref{eq:sum}}&{<} 1-t \overset{\eqref{eq:choose_t}}{<} \frac{\e^2}{2}.
\end{align*}
\endgroup
Third, assume $h_m$ is of the form $g \cdot z_i - z_{i'}$ for some $i,i' \in \N$ and $g \in F$. Then we have
\begingroup
\allowdisplaybreaks
\begin{align}
& \Big| \tau\big( |h_m(a_1, \hdots, a_{r_m}, y_1, \hdots, y_{s_m})|^2 \big) - \tau\Big( \sum_{j=1}^k p_j|h_m(a_1,\hdots,a_{r_m},y_1^{\tau_j}, \hdots y_{s_m}^{\tau_j})|^2 \Big) \Big| \nonumber\\
& =  \Big| \tau\Big( \Big| \alpha^\omega_g\Big( \sum_{j=1}^k p_jy_i^{\tau_j} \Big) - \sum_{j'=1}^k p_{j'} y_{i'}^{\tau_{j'}} \Big|^2 \Big) - \tau\Big( \sum_{j=1}^k p_j |\alpha^\omega_g(y_i^{\tau_j}) - y_{i'}^{\tau_j}|^2 \Big) \Big|\nonumber\\
&= \Big| \tau\Big( \sum_{j=1}^k (\alpha^\omega_g(p_j) - p_j) \alpha^\omega_g(|y_i^{\tau_j}|^2) \Big) - \tau\Big( \sum_{j=1}^k(\alpha^\omega_g(p_j)-p_j) \alpha^\omega_g(y_i^{\tau_j})^* \sum_{j'=1}^k p_{j'} y_{i'}^{\tau_{j'}} \Big) \nonumber\\
 &- \tau\Big( \Big( \sum_{j'=1}^k p_{j'} y_{i'}^{\tau_{j'}} \Big)^* \sum_{j=1}^k (\alpha^\omega_g(p_j) - p_j)\alpha^\omega_g(y_i^{\tau_j}) \Big) \Big| \nonumber\\
 &\leq \Big\| \sum_{j=1}^k (\alpha^\omega_g(p_j) - p_j) \alpha^\omega_g(|y_i^{\tau_j}|^2) \Big\|_{2,\tau} 
 + 2 \Big\| \sum_{j=1}^k( \alpha^\omega_g(p_j) - p_j) \alpha^\omega_g (y_i^{\tau_j}) \Big\|_{2,\tau} \Big\|\sum_{j=1}^k p_j y_{i'}^{\tau_j} \Big\|_{2,\tau} \nonumber\\
  & \leq \Big\| \sum_{j=1}^k (\alpha^\omega_g(p_j) - p_j) \alpha^\omega_g(|y_i^{\tau_j}|^2) \Big\|_{2,\tau}
  + 2 \Big\| \sum_{j=1}^k( \alpha^\omega_g(p_j) - p_j) \alpha^\omega_g( y_i^{\tau_j}) \Big\|_{2,\tau}.\label{eq:intermediate_bringing_projections_out}
\end{align}
\endgroup
Note that for any $g \in G$ and any positive contractions $c_1, \hdots, c_k \in A^{\omega,\rm b}$ commuting with the $p_j$ and $\alpha^\omega_g(p_j)$ one has that 
\[
\begin{array}{cl}
\multicolumn{2}{l}{ \displaystyle \Big\| \sum_{i=1}^k (\alpha^\omega_g(p_i)-p_i)c_i \Big\|^{2}_{2,\tau} }\\
=& \displaystyle \sum_{i=1}^k \tau\big( (\alpha^\omega_g(p_i)-p_i)^2 c_i^2 \big)
+ \sum_{i,j=1\atop i\neq j}^k \tau\big( c_i (\alpha^\omega_g(p_i) - p_i)(\alpha^\omega_g(p_j)-p_j)c_j \big) \\
\leq& \displaystyle \sum_{i=1}^k \Big(\tau((\alpha^\omega_g(p_i)-p_i)^2) - \sum_{\substack{j=1, \hdots, k\\ j \neq i}}\tau(c_i\alpha^\omega_g(p_i)p_jc_j)-\sum_{\substack{j=1, \hdots, k\\ j \neq i}}\tau(c_ip_i\alpha^\omega_g(p_j)c_j) \Big)\\
\leq& \displaystyle \sum_{i=1}^k \|\alpha^\omega_g(p_i) - p_i\|_{2,\tau}^2,
\end{array}
\]
where in the last inequality we used the tracial property and the fact that the $c_i$ commute with the $p_j$ and $\alpha_g^\omega(p_j)$ to show that the last two terms can be rewritten as minus the trace of positive elements.

In particular we have for $g \in F$ that
\begin{equation}\label{intermediate_2}
\Big\| \sum_{j=1}^k (\alpha^\omega_g(p_j) - p_j)\alpha^\omega_g(|y_i^{\tau_j}|^2) \Big\|_{2,\tau} \leq \sqrt{ \sum_{i=1}^k \|\alpha^\omega_g(p_i) - p_i\|_{2,\tau}^2} \overset{\eqref{eq:invariance}}{<} \delta \overset{\eqref{eq:choose_delta}}{=} \frac{\e^2}{18}.
\end{equation}
For $j=1, \hdots, k$ we have that $\alpha^\omega_g(y_i^{\tau_j})$ is a contraction that can be written as a linear combination of positive contractions commuting with the $p_i$ and $\alpha_g^\omega(p_i)$. An application of the triangle inequality yields 
\begin{equation}\label{intermediate_3}
\Big\| \sum_{i=1}^k (\alpha^\omega_g(p_i)-p_i)\alpha^\omega_g(y_i^{\tau_j}) \Big\|_{2,\tau} \leq 4\sqrt{\sum_{i=1}^k \|\alpha^\omega_g(p_i) - p_i\|_{2,\tau}^2} \overset{\eqref{eq:invariance}}{<} 4\delta \overset{\eqref{eq:choose_delta}}{=} \frac{2\e^2}{9} .
\end{equation}
Combining \eqref{eq:intermediate_bringing_projections_out} with \eqref{intermediate_2} and \eqref{intermediate_3} we get 
\begin{align*}
& |\tau(|h_m(a_1, \hdots, a_{r_m}, y_1, \hdots, y_{s_m})|^2) - \tau(\sum_{j=1}^k p_j|h_m(a_1,\hdots,a_{r_m},y_1^{\tau_j}, \hdots y_{s_m}^{\tau_j})|^2)| \\
&< \frac{9\e^2}{18}= \frac{\e^2}{2}.
\end{align*}
Thus, we have indeed shown that \eqref{eq:bringing_projections_out} holds for all $\tau \in T_\omega(A)$ and $m=1, \hdots, \ell$.
From \eqref{eq:definition_b} we see that
\begin{equation}\label{eq:estimate_h_b}
\sum_{j=1}^k p_j |h_m(a_1, \hdots, a_{r_m}, y_1^{\tau_j}, \hdots, y_{s_m}^{\tau_j})|^2 \leq \sum_{j=1}^k p_jb^{\tau_j} \quad \text{for } m=1, \hdots, \ell.
\end{equation}
As a consequence, for $\tau \in T_\omega(A)$ and $m=1, \hdots, \ell$ we get
\begingroup
\allowdisplaybreaks
\begin{align*}
\|h_m(a_1, \hdots, a_{r_m},y_1, \hdots, y_{s_m})\|_{2,\tau}^2  &= \tau (|h_m(a_1, \hdots, a_{r_m}, y_1, \hdots, y_{s_m})|^2)\\
\overset{\eqref{eq:bringing_projections_out}}&{<} \tau\Big( \sum_{j=1}^k p_j |h_m(a_1, \hdots, a_{r_m}, y_1^{\tau_j}, \hdots, y_{s_m}^{\tau_j})|^2 \Big) + \frac{\e^2}{2}\\
 \overset{\eqref{eq:estimate_h_b}}&{\leq} \sum_{j=1}^k \tau(p_jb^{\tau_j}) + \frac{\e^2}{2}\\
 \overset{\eqref{eq:delta_inequality}}&{\leq} \sum_{j=1}^k \frac{\e^2}{2} \tau(p_j) + \frac{\e^2}{2}\\
&\leq\frac{\e^2}{2} + \frac{\e^2}{2} = \e^2.
\end{align*}
\endgroup
This concludes the proof. 
\end{proof}

The next theorem gives an alternative formulation of the tracial local-to-global principle that is convenient to use in certain applications.
Before we state it, we introduce some notation:

\begin{notation}
Let $A$ be a \cstar-algebra with an action $\alpha:G  \acts A$ of a countable discrete group.
Given a tracial state $\tau \in T(A)$, denote by $\pi_\tau: A \rightarrow \mathcal{B}(H_\tau)$ the corresponding GNS representation.
Then we can define the representation 
\[
\pi_\tau^\alpha: A \rightarrow \mathcal{B}\big( \ell^2(G,H_\tau) \big),\ \pi_\tau^\alpha(x)(\xi)(h) = \pi_\tau(\alpha_h^{-1}(x))\xi(h).
\]
The left regular representation $\lambda: G \rightarrow \mathcal{U}\big(\ell^2(G,H_\tau) \big)$ defined by $(g \cdot \xi)(h) = \xi(g^{-1}h)$ for $\xi \in \ell^2(G,H_\tau)$ and $g,h \in G$, implements the action $\alpha$ on $\pi_\tau^\alpha(A)$, so we get a continuous extension of the action $\alpha: G \acts \pi_\tau^\alpha(A)''$ on the weak closure.

Notice that $\pi_\tau^\alpha(A)'' \subseteq \prod_{g \in G} \pi_{\tau} (A)''$.
The trace $\tau$ on $A$ extends to a faithful normal trace on $\pi_{\tau} (A)''$, and by composition with the natural quotient map $q_g: \prod_{g \in G} \pi_{\tau} (A)'' \rightarrow \pi_{\tau} (A)''$ onto the summand with index $g \in G$ also to a normal trace on $\prod_{g \in G} \pi_{\tau} (A)''$, which we will denote by $\tilde{\tau}_g$.
Notice that $\tilde{\tau}_g \circ \pi_\tau^\alpha = \tau \circ \alpha_g^{-1}$. Let $(c_g)_{g \in G}$ be a sequence in $(0,1)$ such that $\sum_{g \in G} c_g = 1$.
Then $\tilde{\tau} := \sum_{g \in G} c_g \tilde{\tau}_g$ defines a faithful normal tracial state on $\prod_{g \in G} \pi_{\tau} (A)''$ and hence also on the subalgebra $\pi_\tau^\alpha(A)''$.
In this way we can form the tracial von Neumann algebra ultrapower $(\pi_\tau^{\alpha}(A)'')^\omega$. Note that on bounded subsets of $\prod_{g \in G} \pi_\tau (A)''$, the strong operator topology is induced by the norm $\|\cdot\|_{2,\tilde{\tau}}$, or equivalently, by the seminorms $\{\|\cdot\|_{2,\tilde{\tau}_g} \mid g \in G\}$.
Since $\pi_\tau^\alpha(A)''$ is a von Neumann subalgebra, it follows that on bounded subsets its strong operator topology is also induced by (the restrictions of) these (semi)norms. 
\end{notation}

\begin{remark}\label{remark:tracial_representation}
With the above notation and terminology, the condition in Theorem \ref{theorem:local-to-global} that requires for every $\e >0,\, \ell \in \N$ and $\tau \in \overline{T_\omega(A)}^{w^*}$ the existence of contractions $(y_i)_{i \in \N}$ in $A^{\omega,\mathrm{b}}$ such that 
\[
\|h_m(a_1, \hdots, a_{r_m}, y_1, \hdots, y_{s_m})\|_{2,\tau} < \e \quad \text{for } m=1, \hdots, \ell
\]
is equivalent to the following statement (by Kaplansky's density theorem): 
For every  $\e >0,\, \ell \in \N$ and $\tau \in \overline{T_\omega(A)}^{w^*}$, there exist contractions $(y'_i)_{i \in \N}$ in $\pi_\tau^{\alpha^\omega}(A^{\omega,\mathrm{b}})''$ such that \[
\|h_m(a_1, \hdots, a_{r_m}, y'_1, \hdots, y'_{s_m})\|_{2,\tilde{\tau}} < \e \quad \text{for } m=1, \hdots, \ell
.\]
 Making use of the tracial von Neumann algebra ultrapowers, this is also equivalent to the following statement:
For every $\tau \in \overline{T_\omega(A)}^{w^*}$ there are contractions $(y''_i)_{i \in \N}$ in $(\pi_\tau^{\alpha^\omega}(A^{\omega,\mathrm{b}})'')^\kappa$ such that $h_m(a_1, \hdots, a_{r_m},y''_1,\hdots,y''_s) = 0$ for every $m \in \N$. 

\end{remark}
\begin{theorem} \label{theorem:reformulation_local_to_global}
Let $A$ be a $\sigma$-unital \cstar-algebra with $T(A)$ non-empty and compact.
Let $\alpha: G \acts A$ be an action by an amenable countable discrete group $G$ and assume it has local equivariant property Gamma w.r.t.\ bounded traces.
Let $\omega$ and $\kappa$ be two free ultrafilters on $\N$.
Let $\delta: G\acts D$ be an action on a separable \cstar-algebra and let $B \subset D$ be a separable, $\delta$-invariant \cstar-subalgebra.
Suppose $\varphi: (B, \delta) \rightarrow (A^{\omega,\rm b}, \alpha^\omega)$ is an equivariant $*$-homomorphism.
Then the following are equivalent:
\begin{enumerate}[topsep=1ex]
\item For every $\tau \in \overline{T_\omega(A)}^{w^*}$, there exists an equivariant $*$-homomorphisms $\varphi^\tau: (D, \delta) \rightarrow ((\pi_\tau^{\alpha^\omega}(A^{\omega,\rm b})'')^{\kappa}, (\alpha^\omega)^{\kappa})$ such that $\varphi^\tau\rvert_B = \pi_\tau^{\alpha^\omega}\circ\varphi$.\label{enum:local_extension}
\item There is an equivariant $*$-homomorphism $\bar{\varphi}: (D, \delta) \rightarrow (A^{\omega,\rm b}, \alpha^\omega)$ with $\bar{\varphi}\big \rvert_B = \varphi$.\label{enum:global_extension}
\end{enumerate}
\end{theorem}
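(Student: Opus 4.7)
The implication (\ref{enum:global_extension})$\Rightarrow$(\ref{enum:local_extension}) is immediate: one composes $\bar\varphi$ with $\pi_\tau^{\alpha^\omega}$ and views the result as a constant sequence inside the ultrapower $(\pi_\tau^{\alpha^\omega}(A^{\omega,\rm b})'')^\kappa$.

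For the converse, the plan is to encode the existence of the desired extension $\bar\varphi$ as the simultaneous solvability of a countable system of $G$-$*$-polynomial equations over $A^{\omega,\rm b}$, so as to apply Theorem \ref{theorem:local-to-global}. First I would fix a countable, $\delta$-invariant, norm-dense $\Q(i)$-$*$-subalgebra $D_0 \subseteq D$ such that $B_0 := D_0 \cap B$ is norm-dense in $B$, and enumerate the nonzero elements of $D_0$ as $\{d_i\}_{i \in \N}$. Set $a_i := \varphi(d_i) \in A^{\omega,\rm b}$ when $d_i \in B_0$ and $a_i := 0$ otherwise; the unknown $z_i$ is intended to represent the contraction $\bar\varphi(d_i)/\|d_i\|$. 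The countable system of $G$-$*$-polynomials then has one polynomial per relation holding in $D_0$: for each $\Q(i)$-linear identity $\lambda d_i + \mu d_j = d_k$, the polynomial $\lambda\|d_i\| z_i + \mu\|d_j\| z_j - \|d_k\| z_k$; for each product identity $d_id_j = d_k$, the polynomial $\|d_i\|\|d_j\| z_iz_j - \|d_k\| z_k$; for each adjoint identity $d_i^* = d_j$, the polynomial $z_i^* - z_j$; for each equivariance identity $\delta_g(d_i) = d_j$, the polynomial $g\cdot z_i - z_j$; and for each index with $d_i \in B_0$, the extension polynomial $\|d_i\| z_i - x_i$.

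To verify the hypothesis of Theorem \ref{theorem:local-to-global}, fix $\tau \in \overline{T_\omega(A)}^{w^*}$ and let $\varphi^\tau$ be as provided by (\ref{enum:local_extension}). The contractions $y_i^\tau := \varphi^\tau(d_i)/\|d_i\| \in (\pi_\tau^{\alpha^\omega}(A^{\omega,\rm b})'')^\kappa$ satisfy every polynomial in the system exactly (after substituting $\pi_\tau^{\alpha^\omega}(a_i)$ into the $x_i$-slot), because $\varphi^\tau$ is a contractive equivariant $*$-homomorphism extending $\pi_\tau^{\alpha^\omega}\circ\varphi$. By Remark \ref{remark:tracial_representation}, this exact solvability in the tracial von Neumann ultrapower translates via Kaplansky's density theorem into $\|\cdot\|_{2,\tau}$-approximate solvability by contractions in $A^{\omega,\rm b}$, as required by Theorem \ref{theorem:local-to-global}. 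Applying that theorem yields contractions $(y_i)_{i \in \N}$ in $A^{\omega,\rm b}$ satisfying every polynomial identity exactly.

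The assignment $d_i \mapsto \|d_i\| y_i$ thereby defines a contractive (since $\|y_i\|\leq 1$), $\Q(i)$-linear, multiplicative, $*$-preserving, equivariant map $D_0 \to A^{\omega,\rm b}$ agreeing with $\varphi$ on $B_0$; as each $\alpha^\omega_g$ acts isometrically on $A^{\omega,\rm b}$, this map extends uniquely by continuity to the sought equivariant $*$-homomorphism $\bar\varphi: D \to A^{\omega,\rm b}$ with $\bar\varphi|_B = \varphi$. The main conceptual hurdle is the translation between the von Neumann ultrapower formulation of (\ref{enum:local_extension}) and the $\|\cdot\|_{2,\tau}$-approximate \cstar-level statement needed by Theorem \ref{theorem:local-to-global}, which is precisely the content of Remark \ref{remark:tracial_representation}; once that is in place, Theorem \ref{theorem:local-to-global} does the rest.
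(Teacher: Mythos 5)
Your argument is correct and follows the same architecture as the paper's proof: encode the multiplication table of a countable dense $\delta$-invariant $\Q(i)$-$*$-subalgebra as a countable system of $G$-$*$-polynomials with coefficients $\varphi(b)$, use Remark \ref{remark:tracial_representation} to convert hypothesis (1) into the local approximate solvability required by Theorem \ref{theorem:local-to-global}, and read the exact global solutions back as a map on the dense subalgebra. The one place where you genuinely diverge from the paper is the contractivity of the resulting map $\bar\varphi$ on the dense subalgebra, which is the only subtle point here (a $\Q(i)$-linear multiplicative $*$-preserving map on a dense, non-complete $*$-subalgebra is not automatically bounded). The paper leaves the variables unnormalized and instead enlarges the subalgebra $C$ so that $1-\sqrt{1-x^*x}\in C$ for every contraction $x\in C$; the preserved algebraic identity $x^*x+(y-1)^2=1$ then forces $\|\bar\varphi(x)\|\le 1$. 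You instead normalize each unknown as $z_i=\bar\varphi(d_i)/\|d_i\|$, so the contraction constraint built into the conclusion of Theorem \ref{theorem:local-to-global} directly yields $\|\bar\varphi(d_i)\|=\|d_i\|\,\|y_i\|\le\|d_i\|$ --- arguably cleaner, and it is available because the local witnesses $\varphi^\tau(d_i)/\|d_i\|$ are genuinely contractions ($\varphi^\tau$ being a $*$-homomorphism out of the complete algebra $D$). The only bookkeeping you should add is a relation handling linear combinations or products in $D_0$ that equal $0$ (which has no index in your enumeration of nonzero elements), e.g.\ by including the polynomials $\lambda\|d_i\|z_i+\mu\|d_j\|z_j$ and $\|d_i\|\|d_j\|z_iz_j$ whenever the corresponding combination vanishes in $D_0$; this is immediate and does not affect the argument.
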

\begin{proof}
It is clear that \ref{enum:global_extension} implies \ref{enum:local_extension}. 
To prove the other implication, take a countable dense $\Q[i]$-$*$-subalgebra $C \subset D$ such that it is $\delta$-invariant and such that $C \cap B$ is also dense in $B$.
By inductively enlarging $C$ we may in addition assume that for each contraction $x \in C$, one has $1-\sqrt{1-x^*x}\in C$.
Let $\mathcal{P}$ denote the countable family of $G$-$*$-polynomials with coefficients in $A^{\omega,\rm b}$ in the variables $\{X_c\}_{c \in C}$ encoding all relations in $C$:
\begin{itemize}[itemsep=1ex,topsep=1ex]
\item $g\cdot X_c - X_{\delta_g(c)}$ for all $c \in C$ and $g \in G$
\item $\lambda X_c + X_{c'} - X_{\lambda c + c'}$ for all $c,c' \in C$ and $\lambda \in \Q[i]$
\item $X_cX_{c'} - X_{cc'}$ for $c,c' \in C$
\item $X_c^*-X_{c^*}$ for $c \in C$
\item $\varphi(b) - X_b$ for $b \in B \cap C.$ 
\end{itemize}
It follows from \ref{enum:local_extension} that for every $\tau \in \overline{T_\omega(A)}^{w^*}$, the equations in $\mathcal{P}$ have exact solutions in $(\pi_\tau^{\alpha^\omega}(A^{\omega,\mathrm{b}})'')^\kappa$. By Remark \ref{remark:tracial_representation} this means precisely that all conditions to apply Theorem \ref{theorem:local-to-global} are fulfilled, and we can find exact solutions to all equations in $\mathcal{P}$ in $A^{\omega,\rm b}$.
This is equivalent to the existence of a $\Q[i]$-linear, $*$-preserving, multiplicative, equivariant map $\bar{\varphi}: C \rightarrow A^{\omega,\rm b}$ with $\bar{\varphi}\rvert_{ B\cap C}= \varphi\rvert_{B\cap C}$. 
We observe that $\bar{\varphi}$ is contractive.
Indeed, if $x \in C$ is a contraction, then $y =1-\sqrt{1 -x^*x}$ is a self-adjoint element also belonging to $C$, which satisfies
\[
x^*x+y^2-2y = x^*x +(y-1)^2 - 1 = 0.
\]
Hence 
\[
\bar{\varphi}(x)^*\bar{\varphi}(x) + \bar{\varphi}(y)^2 - 2\bar{\varphi}(y) = 0, 
\]
or equivalently,
\[
\bar{\varphi}(x)^*\bar{\varphi}(x) + (1-\bar{\varphi}(y))^2 = 1.
\]
We see that $\bar{\varphi}(x)^*\bar{\varphi}(x)$ is a contraction and hence, $\bar{\varphi}(x)$ is as well.
In conclusion, $\varphi$ extends to an equivariant $*$-homomorphism $\bar{\varphi}: (D,\delta) \rightarrow (A^{\omega,\rm b},\alpha^\omega)$ with $\bar{\varphi}_B=\varphi$. 
\end{proof}

In many cases of interest we get the following corollary by Proposition \ref{prop:unital_Gamma_agrees_local_Gamma}, which directly generalizes and recovers the technical machinery related to uniform property Gamma from the non-dynamical setting; see \cite[Lemma 4.1]{CETWW21}.
We note that upon close inspection of our proof so far, this particular corollary can be obtained based on \cite[Lemma 3.6]{CETWW21} without relying on the preprint \cite{CCEGSTW}.

\begin{corollary} \label{corollary:local-to-global-algebraically-simple}
Let $A$ be a separable, simple, nuclear \cstar-algebra with $T(A)$ non-empty and compact, and such that $T^+(A)=\mathbb R^{>0} T(A)$.
Let $\alpha:G \acts A$ be an action by a countable amenable discrete group that has equivariant property Gamma.
Then $\alpha$ satisfies the conclusion of Theorems \ref{theorem:local-to-global} and \ref{theorem:reformulation_local_to_global}.
\end{corollary}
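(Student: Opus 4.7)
The plan is to recognize this as a direct packaging of the two main technical theorems of the section, so the proof is a matter of verifying that their hypotheses are automatically fulfilled under the stronger assumptions of the corollary. Concretely, Theorems \ref{theorem:local-to-global} and \ref{theorem:reformulation_local_to_global} require two ingredients: that $A$ has weak CPoU in the sense of Proposition \ref{prop:weak-cpou}, and that $\alpha$ has local equivariant property Gamma with respect to bounded traces. I will verify each.

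For weak CPoU, the assumption that $A$ is separable nuclear with $T(A)$ non-empty and compact places us squarely in the setting covered by \cite[Lemma 3.6]{CETWW21}, which yields exactly the conclusion of Proposition \ref{prop:weak-cpou} for nuclear \cstar-algebras. In particular, this avoids having to invoke the more general tracially-complete machinery from \cite{CCEGSTW} that underlies our proof of Proposition \ref{prop:weak-cpou} in its stated generality. For the equivariant Gamma upgrade, note that since $A$ is simple, nuclear (hence exact), and stably finite (it admits a tracial state), the existence theorem for traces recalled after Notation \ref{nota:basic-notation} gives $Q\tilde T_2(A) = \tilde T(A)$. Together with the assumptions that $T(A)$ is non-empty and compact and that $T^+(A) = \mathbb{R}^{>0} T(A)$, this puts us exactly in the setting of Proposition \ref{prop:unital_Gamma_agrees_local_Gamma}. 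That proposition then tells us that equivariant uniform property Gamma for $\alpha$ is equivalent to local equivariant uniform property Gamma with respect to bounded traces, so the latter holds by hypothesis.

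With both hypotheses of Theorems \ref{theorem:local-to-global} and \ref{theorem:reformulation_local_to_global} verified, the conclusion of those theorems applies verbatim to $\alpha$, which is what we need. There is essentially no obstacle to overcome, only some bookkeeping: the one point worth double-checking is that the hypothesis $T^+(A) = \mathbb{R}^{>0} T(A)$ plays a double role, ensuring on the one hand that generalized limit traces reduce to multiples of ordinary limit traces (needed so that $A^\omega \cap A' = A^{\omega,\mathrm b} \cap A'$ and the two Gamma notions coincide), and on the other hand that the bounded-trace ultrapower $A^{\omega,\mathrm b}$ is a genuinely faithful tracial completion of $A$ in which the statements of Theorems \ref{theorem:local-to-global} and \ref{theorem:reformulation_local_to_global} are nonvacuous.
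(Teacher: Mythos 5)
Your proof is correct and follows exactly the route the paper intends: weak CPoU comes for free from nuclearity via \cite[Lemma 3.6]{CETWW21} (avoiding \cite{CCEGSTW}), and Proposition \ref{prop:unital_Gamma_agrees_local_Gamma} converts equivariant property Gamma into local equivariant property Gamma w.r.t.\ bounded traces, after which Theorems \ref{theorem:local-to-global} and \ref{theorem:reformulation_local_to_global} apply directly. This matches the paper's own (one-line) justification of the corollary.
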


\begin{remark}
For potential subsequent applications of the theory in this article, let us reflect on how we ended up with the main result of this section.
It is worthwhile to note that the amenability of the group $G$ is used (in the proof of Theorem \ref{theorem:local-to-global}) through the F{\o}lner condition exclusively for the purpose to have access to a finite set $H\ssubset G$ that satisfies the conclusion of Lemma \ref{lemma:precursor_1}.
At no other point in the whole chain of argument is it necessary to know that $H$ is actually a set that is almost invariant with respect to $F$, or anything else about $H$ for that matter.
This culminates into the following more explicit observation, which we suspect may be, at some point, interesting to consider for certain actions of non-amenable groups:

Let $A$ be a $\sigma$-unital \cstar-algebra with $T(A)$ non-empty and compact.
Let $\alpha:G \acts A$ be an action of a countable discrete group.
Suppose that for all $\e>0$ and $F\ssubset G$, there exists a finite subset $H\ssubset G$ that satisfies the same conclusion as in Lemma \ref{lemma:precursor_1}.
If $\alpha$ has local equivariant property Gamma w.r.t.\ bounded traces, then $\alpha$ also satisfies the conclusion of Theorems \ref{theorem:local-to-global} and \ref{theorem:reformulation_local_to_global}.
\end{remark}

\section{Equivariant Jiang--Su stability}

In this section we use the dynamical tracial local-to-global principle derived in the previous section combined with von Neumann algebraic results to conclude that for actions of countable amenable groups on separable, simple, nuclear, finite, $\mathcal{Z}$-stable \cstar-algebras, equivariant property Gamma implies equivariant $\mathcal{Z}$-stability.
Although one can get by with known variations of Ocneanu's theorem \cite{Ocneanu85} for many applications treated in this section, our most general results here need a more general McDuff-type theorem for actions of amenable groups on von Neumann algebras, which we import from our recent work \cite{SzaboWouters23md}.

We begin by reducing the problem of equivariant $\mathcal{Z}$-stability to the existence of so-called tracially large c.p.c.\ order zero maps $M_n \rightarrow F_\omega(A)^{\tilde{\alpha}_\omega}$ for $n \geq 2$. The argument is well-known to experts and traces back to the work of Matui--Sato \cite{MatuiSato12}. It makes use of an equivariant version of their property (SI), for which the general framework needed here was developed in \cite{Szabo21si}.

\begin{definition}[{\cite[Definition 2.5]{Szabo21si}, \cite[Definition 1.3]{CastillejosLiSzabo}}]
Let $A$ be a separable, simple \cstar-algebra with $T^+(A)\neq \emptyset$. 
\begin{enumerate}
\item We say that a positive contraction $f \in F_\omega(A)$ is \emph{tracially supported at 1} if the following holds: For every non-zero positive element $a \in \mathcal{P}(A)$, there exists a constant $\kappa = \kappa(f,a) >0$ such that for every $\tau \in \tilde{T}_\omega(A)$ with $0<\tau(a)<\infty$, one has $\inf_{k \in \N} \tau_a(f^k) \geq \kappa \tau(a)$.
\item A positive element $e \in F_\omega(A)$ is called \emph{tracially null}, if $e\in\mathcal{J}_A$ in the sense of Definition \ref{def:AomegacapAprime}.
\item Let $B$ be a unital \cstar-algebra. A c.p.c.\ order zero map $\phi:B \rightarrow F_\omega(A)$ is called \emph{tracially large} if $\tau_a \circ \phi(1) = \tau(a)$ for all non-zero positive elements $a \in \mathcal{P}(A)$ and $\tau \in \tilde{T}_\omega(A)$ with $\tau(a) < \infty$. 
\end{enumerate}
\end{definition}
 \begin{remark}\label{remark:pedersen_check_single_element}
 It follows from \cite[Proposition 2.4]{Szabo21si} that any of the conditions above hold for all non-zero positive elements $a \in \mathcal{P}(A)$ if and only if they hold for just a single such element, so in practice it suffices to check them for a single $a \in \mathcal{P}(A)_+ \setminus\{0\}$.
 \end{remark}

\begin{definition}[{\cite[Definition 2.7]{Szabo21si}}]
Let $A$ be a separable, simple \cstar-algebra with $T^+(A)\neq \emptyset$ and an action $\alpha:G \acts A$ of a countable discrete group. We say that $\alpha$ has \emph{equivariant property (SI)} if the following holds:

Whenever $e,f \in F_\omega(A)^{\tilde{\alpha}_\omega}$ are two positive contractions such that $f$ is tracially supported at 1 and $e$ is tracially null, there exists a contraction $s \in F_\omega(A)^{\tilde{\alpha}_\omega}$ such that $fs=s$ and $s^*s=e$. 
\end{definition}

It follows from \cite[Corollary 4.3]{Szabo21si} that all actions of amenable groups on non-elementary, separable, simple, nuclear \cstar-algebras with strict comparison have property (SI).
Together with the following theorem, it combines into a powerful sufficient criterion for equivariant Jiang--Su stability.
This is not new to the experts, but has never been formally stated in this generality before, so we shall give the proof for the reader's convenience. 

\begin{theorem} \label{theorem:reduction_equivariant_Z-stability}
Let $A$ be a separable, simple \cstar-algebra with $T^+(A) \neq \emptyset$ and $\alpha: G \acts A$ an action of a countable discrete group with equivariant property (SI).
Then $\alpha$ is equivariantly $\mathcal{Z}$-stable if and only if for every $n \in \N$, there exists a unital $*$-homomorphism $M_n \rightarrow (A^\omega\cap A')^{\alpha^\omega}$.
\end{theorem}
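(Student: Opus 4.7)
The plan is to treat the two directions separately, with the reverse implication carrying the real content.

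For the ``only if'' direction, I would appeal directly to the argument recalled in Remark~\ref{remark:Z-stability_implies_Gamma}: equivariant $\mathcal{Z}$-stability of $\alpha$ yields a unital $*$-homomorphism $\mathcal{Z}^\omega\cap\mathcal{Z}'\to (A^\omega\cap A')^{\alpha^\omega}$, and since $\mathcal{Z}^\omega\cap\mathcal{Z}'$ admits unital embeddings of all matrix algebras $M_n$, composition delivers the desired maps.

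For the ``if'' direction, I would follow the Matui--Sato strategy \cite{MatuiSato12}, carried out equivariantly. First, for each $n\geq 2$, use Proposition~\ref{prop:fixed_point_surjectivity} to lift the given unital $*$-homomorphism $M_n\to (A^\omega\cap A')^{\alpha^\omega}$ to an equivariant c.p.c.\ order zero map $\phi_n: M_n\to F_\omega(A)^{\tilde{\alpha}_\omega}$. Because the quotient recovers something unital in $A^\omega\cap A'$, the defect $1-\phi_n(1)$ must lie in $\mathcal{J}_A$ and is therefore tracially null, so each $\phi_n$ is tracially large in the sense recalled at the start of this section.

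Next, for coprime $p,q\geq 2$, a standard reindexation trick arranges tracially large equivariant c.p.c.\ order zero maps $\phi_p: M_p\to F_\omega(A)^{\tilde{\alpha}_\omega}$ and $\phi_q: M_q\to F_\omega(A)^{\tilde{\alpha}_\omega}$ with commuting ranges. At this point I would invoke equivariant property (SI) to produce a contraction $s\in F_\omega(A)^{\tilde{\alpha}_\omega}$ that, together with $\phi_p$ and $\phi_q$, assembles into a unital $*$-homomorphism from the prime dimension drop algebra $Z_{p,q}$ into $F_\omega(A)^{\tilde{\alpha}_\omega}$; this is the step where the Matui--Sato construction is transported verbatim into the fixed-point algebra. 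Exhausting coprime pairs via the inductive-limit presentation of $\mathcal{Z}$ and one more reindexation produces a unital $*$-homomorphism $\mathcal{Z}\to F_\omega(A)^{\tilde{\alpha}_\omega}$, from which an equivariant McDuff-type absorption argument (as in \cite{Szabo21si}) upgrades to equivariant $\mathcal{Z}$-stability of $\alpha$.

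The main obstacle I expect is ensuring that equivariance survives the (SI) step: every ingredient must be chosen inside the fixed-point algebra $F_\omega(A)^{\tilde{\alpha}_\omega}$ rather than merely inside $F_\omega(A)$, so that the resulting $Z_{p,q}$-embedding is equivariant with respect to the trivial action on $Z_{p,q}$. This is precisely the content that equivariant property (SI) is designed to guarantee, and is why its hypothesis appears in the statement.
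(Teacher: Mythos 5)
Your proposal is correct and follows essentially the same route as the paper: both directions are handled as you describe, with the ``only if'' part coming from the standard argument of Remark~\ref{remark:Z-stability_implies_Gamma} and the ``if'' part lifting the unital $*$-homomorphism through Proposition~\ref{prop:fixed_point_surjectivity} to a tracially large equivariant c.p.c.\ order zero map $\phi: M_n\to F_\omega(A)^{\tilde{\alpha}_\omega}$ and then feeding equivariant property (SI) into the dimension-drop-algebra machinery. The one place where you take an unnecessary detour is the middle step: the paper does not need two commuting order zero maps for coprime $p,q$. A single tracially large $\phi$ already supplies exactly the (SI) input --- $f=\phi(e_{1,1})$ is tracially supported at $1$ and $e=1_{F_\omega(A)}-\phi(1)$ is tracially null --- so (SI) produces $s\in F_\omega(A)^{\tilde{\alpha}_\omega}$ with $fs=s$ and $s^*s=e$, and \cite[Theorem 5.1]{RordamWinter10} converts this data directly into a unital $*$-homomorphism $Z_{n,n+1}\to F_\omega(A)^{\tilde{\alpha}_\omega}$ (note $n$ and $n+1$ are automatically coprime, so these are already prime dimension drop algebras and $\mathcal Z$ is an inductive limit of them). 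Your two-map version of the (SI) step is left unspecified --- it is not clear which tracially null and tracially supported elements you would feed into (SI) there --- so you should either spell that out or, more simply, drop it in favour of the one-map argument; the concluding appeal to \cite[Corollary 3.8]{Szabo18ssa} for equivariant $\mathcal Z$-stability is as you indicate.
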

\begin{proof}
Since the ``only if'' part can be obtained with the standard argument sketched in Remark \ref{remark:Z-stability_implies_Gamma}, we prove the ``if'' part.
Given $n \in \N$, let $\phi': M_n \rightarrow (A^\omega\cap A')^{\alpha^\omega}$ be a unital $*$-homomorphism.
By Proposition \ref{prop:fixed_point_surjectivity}, we can find a tracially large c.p.c.\ order zero map  $\phi:M_n \rightarrow F_\omega(A)^{\tilde{\alpha}_\omega}$ that lifts $\phi'$.
Set $e:= 1_{F_\omega(A)} - \phi(1)$ and set $f:= \phi(e_{1,1})$.
Both are positive contractions in $F_\omega(A)^{\tilde{\alpha}_\omega}$.
Since $\phi$ is tracially large, we can conclude immediately that $e$ is tracially null.
Since $e_{1,1}$ is a projection, it follows that $\phi(e_{1,1}) - \phi(e_{1,1})^m$ is tracially null for any $m \geq 1$. Moreover, for every $\tau \in \tilde{T}_\omega(A)$ and $a \in \mathcal{P}(A)_+ \setminus\{0\}$ such that $\tau(a) < \infty$, the functional $\tau_a\circ\phi$ is a bounded trace and therefore a multiple of the unique tracial state on $M_n$. So for every $k \in \N$ we have
\[
\tau_a(f^k) =\tau_a(\phi(e_{1,1})^k) = \tau_a(\phi(e_{1,1})) = \frac{1}{n}\tau(a).
\]
This proves that $f$ is tracially supported at 1. 

Since $\alpha$ has equivariant property (SI), we can find a contraction $s \in F_\omega(A)^{\tilde{\alpha}_\omega}$ such that $fs=s$ and $s^*s=e$.
By \cite[Theorem 5.1]{RordamWinter10} this implies the existence of a unital $*$-homomorphism from the dimension drop algebra $Z_{n,n+1}$ into $F_\omega(A)^{\tilde{\alpha}_\omega}$.
As $\mathcal{Z}$ is an inductive limit of those algebras we find a unital $*$-homomorphism $\mathcal{Z} \rightarrow F_\omega(A)^{\tilde{\alpha}_\omega}$\footnote{This is a standard reindexation trick. Alternatively one can deduce this for example by a combination of \cite[Corollary 3.9, Lemma 4.2]{BarlakSzabo16}.}.
This implies equivariant $\mathcal{Z}$-stability by \cite[Corollary 3.8]{Szabo18ssa}.
\end{proof}

We shall now prove that for actions of amenable groups on simple nuclear $\mathcal Z$-stable \cstar-algebras, equivariant uniform property Gamma is equivalent to equivariant $\mathcal Z$-stability.
We end up giving two separate arguments to prove this result in two cases.
Firstly, we prove this result for actions on \cstar-algebras that have a compact non-empty tracial state space and no unbounded traces, for which it is sufficient to appeal to Corollary \ref{corollary:local-to-global-algebraically-simple}.
Secondly, we prove the result in full generality, but this requires the full power of our theory based on the results from \cite{CCEGSTW}.

Let us proceed in the first case:

\begin{theorem} \label{theorem:Gamma-equivalent-to-Z-stable-special-case}
Let $A$ be a separable, nuclear, simple $\mathcal Z$-stable \cstar-algebra with $T(A)$ non-empty and compact, and such that $T^+(A)=\mathbb R^{>0}T(A)$.
Let $\alpha: G \acts A$ be an action of a countable discrete amenable group.
If $\alpha$ has equivariant property Gamma, then $\alpha$ is equivariantly $\mathcal{Z}$-stable.   
\end{theorem}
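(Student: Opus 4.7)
The plan is to invoke Theorem \ref{theorem:reduction_equivariant_Z-stability}, so it suffices to produce, for every $n\geq 2$, a unital $*$-homomorphism $M_n\to (A^\omega\cap A')^{\alpha^\omega}$. The equivariant property (SI) hypothesis of that theorem is automatic here: $\mathcal{Z}$-stability of $A$ implies strict comparison, so $\alpha$ has equivariant property (SI) by \cite[Corollary~4.3]{Szabo21si}.

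To manufacture such embeddings, I would set $D:=A\otimes M_n$ with action $\delta:=\alpha\otimes\operatorname{id}_{M_n}$, take $B:=A\otimes 1\subset D$ (a $\delta$-invariant \cstar-subalgebra on which $\delta$ restricts to $\alpha$), and let $\varphi:(A,\alpha)\to(A^\omega,\alpha^\omega)$ be the canonical inclusion. If I can extend $\varphi$ to an equivariant $*$-homomorphism $\bar{\varphi}:(D,\delta)\to(A^\omega,\alpha^\omega)$, then $\bar{\varphi}(1\otimes M_n)$ commutes with $A\subset A^\omega$ and is fixed by $\alpha^\omega$, yielding the desired unital embedding $M_n\to (A^\omega\cap A')^{\alpha^\omega}$.

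To obtain the extension, apply Corollary \ref{corollary:local-to-global-algebraically-simple} (whose hypotheses are met after invoking Proposition \ref{prop:unital_Gamma_agrees_local_Gamma} to upgrade equivariant property Gamma to its local variant with respect to bounded traces). This reduces the task to verifying the local condition: for every $\tau\in\overline{T_\omega(A)}^{w^*}$, there should exist an equivariant $*$-homomorphism
\[
\varphi^\tau:(A\otimes M_n,\alpha\otimes\operatorname{id})\longrightarrow\bigl((\pi_\tau^{\alpha^\omega}(A^\omega)'')^{\kappa},(\alpha^\omega)^{\kappa}\bigr)
\]
extending $\pi_\tau^{\alpha^\omega}\circ\varphi$. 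Writing $\mathcal{M}_\tau:=\pi_\tau^{\alpha^\omega}(A^\omega)''$, which is a finite tracial von Neumann algebra carrying a $G$-action implemented by the left regular representation, this amounts to producing a unital $*$-homomorphism $M_n\to (\mathcal{M}_\tau^\kappa)^{(\alpha^\omega)^\kappa}\cap\pi_\tau^{\alpha^\omega}(A)'$ for the trivial $G$-action on $M_n$.

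The main obstacle — and the reason we lean on von Neumann algebraic input — is obtaining this last embedding. Since $A$ is nuclear, $\pi_\tau(A)''$ is hyperfinite, and one expects $\mathcal{M}_\tau$ to be amenable as well; combined with amenability of $G$, this puts the situation within the scope of the McDuff-type theorem for actions of amenable groups on finite tracial von Neumann algebras imported from \cite{SzaboWouters23md}. Delivering precisely the above equivariant embedding from that theorem is the delicate step, as it requires verifying that $(\mathcal{M}_\tau,(\alpha^\omega)|_{\mathcal{M}_\tau})$ fits the hypotheses of the relevant McDuff result (including the structure of its centre decomposition). Once this is in place, Corollary \ref{corollary:local-to-global-algebraically-simple} globalizes the per-trace data to the desired extension $\bar{\varphi}$, and Theorem \ref{theorem:reduction_equivariant_Z-stability} converts the resulting matrix embeddings into equivariant $\mathcal{Z}$-stability.
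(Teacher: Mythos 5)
Your overall architecture is the one the paper itself advertises in the introduction and essentially the one it implements: reduce via Theorem \ref{theorem:reduction_equivariant_Z-stability} (with equivariant (SI) supplied by \cite[Corollary 4.3]{Szabo21si}, as you note) to producing unital copies of $M_n$ in $(A^\omega\cap A')^{\alpha^\omega}$, then use the local-to-global principle to reduce this to a one-trace-at-a-time statement, and finally settle the per-trace statement by von Neumann algebraic means. The paper runs this through the $*$-polynomial form (Theorem \ref{theorem:local-to-global}) with matrix-unit relations rather than through the extension problem for $A\otimes 1\subseteq A\otimes M_n$ via Theorem \ref{theorem:reformulation_local_to_global}, but these formulations are interchangeable, so no objection there.

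The genuine problem is in your last step. The per-trace condition you must verify lives in $\mathcal{M}_\tau:=\pi_\tau^{\alpha^\omega}(A^{\omega,\rm b})''$ for $\tau\in\overline{T_\omega(A)}^{w^*}$, and your expectation that $\mathcal{M}_\tau$ is amenable is false in general: this von Neumann algebra is generated by the image of the entire uniform tracial ultrapower and is typically non-injective (already for $A=\mathcal Z$ with a limit trace it contains a copy of $\mathcal R^\omega$). So the McDuff-type theorem of \cite{SzaboWouters23md} cannot be applied to $(\mathcal{M}_\tau,\alpha^\omega)$ as you propose, and the ``delicate step'' you flag does not go through in that form. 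The repair is to notice that the copy of $M_n$ you need only has to commute with $\pi_\tau^{\alpha^\omega}(A)$, i.e.\ with the image of the \emph{separable} subalgebra $B=A$, not with all of $\mathcal{M}_\tau$. Setting $\sigma:=\tau|_A\in T(A)$ (which lies in $T(A)$ by compactness), the algebra $N_\sigma:=\pi_\sigma^{\alpha}(A)''$ is injective because $A$ is nuclear, has no type I direct summand because $A$ is simple, non-elementary and $\mathcal Z$-stable, hence satisfies $N_\sigma\bar\otimes\mathcal R\cong N_\sigma$ by Connes' theorem; then \cite[Theorem A]{SzaboWouters23md} yields a unital equivariant embedding of $M_n$ into the $\alpha$-fixed points of $(N_\sigma)^\kappa\cap N_\sigma'$, which one pushes forward along the trace-compatible normal inclusion $N_\sigma\to\mathcal{M}_\tau$ to land in $\big((\mathcal{M}_\tau)^\kappa\big)^{(\alpha^\omega)^\kappa}\cap \pi_\tau^{\alpha^\omega}(A)'$. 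This is exactly how the paper proceeds (and why its proof phrases the local condition as approximate solutions \emph{in $A$} with respect to $\tau\in T(A)$, so that only the injective algebras $\pi_\tau^\alpha(A)''$ ever enter); compare also the proof of Lemma \ref{lemma:equivalent_absorption_criteria}, where the same restriction-to-$A$ device appears explicitly.
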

\begin{proof}
By Theorem \ref{theorem:reduction_equivariant_Z-stability}, it suffices to construct a unital $*$-homomorphism $M_n \rightarrow (A^\omega\cap A')^{\alpha^\omega}$ for $n \geq 2$.
We appeal to Corollary \ref{corollary:local-to-global-algebraically-simple} and hence know that $\alpha$ obeys the conclusion of Theorem \ref{theorem:local-to-global}.

Let $(a_k)_{k \in \N}$ be a dense sequence in $A$.
Then the existence of such a desired $*$-homomorphism is equivalent to the existence of elements $e_{1,1}, e_{2,1}, \hdots, e_{n,1} \in A^\omega$ satisfying the equations
\[
e_{i,1}e_{j,1}=0 ,\, e_{i,1}^*e_{j,1}=\delta_{ij} e_{1,1},\, e_{1,1}^2=e_{1,1}, \, \alpha^\omega_g(e_{i,1})=e_{i,1} \text{ and } a_k e_{i,1}-e_{i,1}a_k=0
\]
for all $g\in G$, $i,j=2, \hdots, n$ and $k \in \N$.
By Theorem \ref{theorem:local-to-global}, it suffices to show that for each $\varepsilon>0$, finite subset $F\subset\subset G$, $m\in\N$ and every tracial state $\tau\in T(A)$, there exists contractions $f_{1,1}, f_{2,1}, \hdots, f_{n,1} \in A$ satisfying
\[
\|f_{i,1}f_{j,1}\|_{2,\tau}<\varepsilon,\, \|f_{i,1}^*f_{j,1} - \delta_{ij} f_{1,1}\|_{2,\tau}<\varepsilon,\ \|f_{1,1}^2-f_{1,1}\|_{2,\tau} < \e,\]
\[ \| \alpha_g(f_{i,1})-f_{i,1}\|_{2,\tau}<\varepsilon \text{ and } \|a_k f_{i,1}- f_{i,1}a_k\|_{2,\tau}<\varepsilon
\]
for all $g\in F$, $i,j =2, \hdots, n$ and $1\leq k \leq m$.
This is the case, however, if and only if for every $\tau\in T(A)$ there exists a unital equivariant $*$-homomorphism $M_n\to \big( (\pi_\tau^\alpha(A)'')^\omega\cap A' \big)^{\alpha^\omega}$.
Since $A$ is nuclear, the tracial von Neumann algebra $N_\tau:=\pi_\tau^\alpha(A)''$ is injective, see for example \cite[Theorem IV.2.2.13]{Blackadar}.
Since it does not have any direct summand of type I, it follows from Connes' theorem \cite{Connes76} that $N_\tau\bar{\otimes}\mathcal R\cong N_\tau$.
Hence the claim follows directly from \cite[Theorem A]{SzaboWouters23md}.
\end{proof}

Next we proceed in the second and more general case:

\begin{lemma} \label{lemma:equivalent_absorption_criteria}
Let $A$ be a $\sigma$-unital \cstar-algebra with $T(A)$ non-empty and compact. Let $\alpha: G \acts A$ be an action by a countable discrete amenable group $G$ and assume it has local equivariant property Gamma w.r.t.\ bounded traces. Then the following are equivalent:
\begin{enumerate}
\item For all $\tau \in T(A)$, there exists a unital $*$-homomorphisms $\varphi^\tau: M_n \rightarrow ((\pi_\tau^\alpha(A)'')^\omega)^{\alpha^\omega}$.\label{enum:local_inclusion_M2}
\item There exists a unital $*$-homomorphism $\varphi: M_n \rightarrow (A^{\omega,\rm b})^{\alpha^\omega}$.\label{enum:global_inclusion_M2} 
\end{enumerate}
\end{lemma}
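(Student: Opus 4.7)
The direction $(2) \Rightarrow (1)$ follows by composition with the equivariant GNS-type representation $\pi_\tau^{\alpha^\omega}:(A^{\omega,\rm b},\alpha^\omega)\to(\pi_\tau^{\alpha^\omega}(A^{\omega,\rm b})'',\alpha^\omega)$ for each $\tau\in T(A)$. Using the standard identification $\pi_\tau^{\alpha^\omega}(A^{\omega,\rm b})''\cong (\pi_\tau^\alpha(A)'')^\omega$ (valid because $\tau$, viewed as the constant limit trace, induces precisely this tracial von Neumann ultrapower via Kaplansky density), the composition sends matrix units to fixed-point matrix units, yielding the required unital $*$-homomorphism $M_n\to((\pi_\tau^\alpha(A)'')^\omega)^{\alpha^\omega}$.

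For $(1)\Rightarrow(2)$, the plan is to invoke Theorem~\ref{theorem:reformulation_local_to_global} with $B=\mathbb{C}\cdot 1_{A^{\omega,\rm b}}$, $D=M_n$ equipped with the trivial $G$-action, and $\varphi:B\hookrightarrow A^{\omega,\rm b}$ the unital inclusion. Under these choices, condition~\ref{enum:global_extension} of that theorem is exactly our (2), while condition~\ref{enum:local_extension} asserts that for every $\tilde\tau\in\overline{T_\omega(A)}^{w^*}$ there exists a unital $*$-homomorphism $M_n\to\bigl((\pi_{\tilde\tau}^{\alpha^\omega}(A^{\omega,\rm b})'')^\kappa\bigr)^{(\alpha^\omega)^\kappa}$. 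Hence the entire proof reduces to establishing this hypothesis from (1).

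To do so, I first treat the case when $\tilde\tau$ is a limit trace arising from a sequence $(\tau_k)_{k\in\N}$ in $T(A)$. One has the canonical identification $\pi_{\tilde\tau}^{\alpha^\omega}(A^{\omega,\rm b})''\cong\prod_{k\to\omega}\pi_{\tau_k}^\alpha(A)''$ as tracial von Neumann algebras with compatible $G$-action. Assumption (1) supplies for each $k$ a unital equivariant $*$-homomorphism $\psi_k:M_n\to (\pi_{\tau_k}^\alpha(A)'')^\omega$; Kaplansky density then lets me extract, for each $m\in\N$ and finite $F\ssubset G$, elements of $\pi_{\tau_k}^\alpha(A)''$ satisfying the matrix unit relations up to error $1/m$ in the relevant trace norm and fixed up to $1/m$ under $F$. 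A diagonalization over $(k,m)$, with the $\kappa$-ultrapower absorbing the final approximation errors, assembles these into exact matrix units in $\bigl((\pi_{\tilde\tau}^{\alpha^\omega}(A^{\omega,\rm b})'')^\kappa\bigr)^{(\alpha^\omega)^\kappa}$.

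The main obstacle is extending this construction from limit traces to all $\tilde\tau\in\overline{T_\omega(A)}^{w^*}$. Since $T_\omega(A)$ is weak-$*$-dense in its closure and $\tilde\tau\mapsto\tilde\tau(h(y))$ is weak-$*$-continuous for any fixed polynomial evaluation $h(y)$ in finitely many variables, an additional reindexation handles this uniformly: approximate any $\tilde\tau$ in the closure by limit traces to which the previous construction applies, and repackage the resulting approximate matrix units via a further layer of the $\kappa$-ultrapower into exact matrix units in $\bigl((\pi_{\tilde\tau}^{\alpha^\omega}(A^{\omega,\rm b})'')^\kappa\bigr)^{(\alpha^\omega)^\kappa}$. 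With the hypothesis of Theorem~\ref{theorem:reformulation_local_to_global} verified in this way, (2) follows.
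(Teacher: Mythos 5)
Your reduction to Theorem~\ref{theorem:reformulation_local_to_global} with $B=\C\cdot 1_{A^{\omega,\rm b}}$, $D=M_n$ and the trivial $G$-action is exactly the route the paper takes, and your direction (2)~$\Rightarrow$~(1) is fine. The gap is in the final step, which you yourself flag as ``the main obstacle'': passing from limit traces to arbitrary $\tilde\tau\in\overline{T_\omega(A)}^{w^*}$. Weak-$*$ approximation of $\tilde\tau$ by limit traces $\tilde\tau_j$ only gives $\tilde\tau_j(a)\to\tilde\tau(a)$ for each \emph{fixed} element $a$, whereas the approximate matrix units you construct depend on $j$: you control $\|h_m(y^{(j)})\|_{2,\tilde\tau_j}$ but obtain no estimate on $\|h_m(y^{(j)})\|_{2,\tilde\tau}$, and nothing in the weak-$*$ convergence lets you compare the two for varying $y^{(j)}$. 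Worse, the $y^{(j)}$ live in (the $\kappa$-ultrapower of) the GNS weak closure with respect to $\tilde\tau_j$, which is a different von Neumann algebra from $\pi_{\tilde\tau}^{\alpha^\omega}(A^{\omega,\rm b})''$, and there is no map transporting them; a ``further layer of the $\kappa$-ultrapower'' only absorbs errors measured in $\|\cdot\|_{2,\tilde\tau}$, which you never produce. So the argument does not close.

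The paper avoids this entirely with a restriction trick that works uniformly for every $\tilde\tau$ in the closure, limit trace or not. Given $\tilde\tau\in\overline{T_\omega(A)}^{w^*}$, its restriction $\sigma:=\tilde\tau|_A$ along the canonical copy of $A$ inside $A^{\omega,\rm b}$ lies in $T(A)$ (here the compactness of $T(A)$ is used), and the canonical map $A\to A^{\omega,\rm b}$ induces a unital, trace-preserving, equivariant $*$-homomorphism $\pi_\sigma^\alpha(A)\to\pi_{\tilde\tau}^{\alpha^\omega}(A^{\omega,\rm b})''$ that is strong-operator continuous on the unit ball, hence extends to a normal embedding $\pi_\sigma^\alpha(A)''\to\pi_{\tilde\tau}^{\alpha^\omega}(A^{\omega,\rm b})''$. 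Composing the map supplied by hypothesis (1) for $\sigma$ with the induced embedding of tracial ultrapowers yields the required unital $*$-homomorphism $M_n\to\big((\pi_{\tilde\tau}^{\alpha^\omega}(A^{\omega,\rm b})'')^{\kappa}\big)^{(\alpha^\omega)^\kappa}$ with no approximation of traces at all. If you replace your last two paragraphs with this observation, the proof is complete, and your separate treatment of genuine limit traces (including the ultraproduct identification $\pi_{\tilde\tau}^{\alpha^\omega}(A^{\omega,\rm b})''\cong\prod_{k\to\omega}\pi_{\tau_k}^\alpha(A)''$, which itself would need justification) becomes unnecessary.
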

\begin{proof}
For each $\tau \in T(A)$, the map $\pi_\tau^\alpha$ induces a unital $*$-homomorphism 
\[
(A^{\omega,\rm b})^{\alpha^\omega} \rightarrow ((\pi_\tau^\alpha(A)'')^\omega)^{\alpha^\omega},
\] so \ref{enum:global_inclusion_M2} implies \ref{enum:local_inclusion_M2}. We use the fact that $\alpha$ has local equivariant property Gamma to prove the other implication. By Theorem \ref{theorem:reformulation_local_to_global} the following are equivalent:
\begin{enumerate}[label=(\alph*)]
\item For all $\tau \in \overbar{T_\omega(A)}^{w^*}$, there exists a unital equivariant $*$-homomorphism $\varphi^\tau: (M_n, \mathrm{id}_{M_n}) \rightarrow ((\pi_\tau^{\alpha^\omega}(A^{\omega,\rm b})'')^\kappa, (\alpha^\omega)^\kappa)$.\label{enum:local_inclusion_tensor}
\item There exists a unital equivariant $*$-homomorphism $\varphi: (M_n, \mathrm{id}_{M_n}) \rightarrow (A^{\omega,\rm b}, \alpha^\omega)$.\label{enum:global_inclusion_tensor}
\end{enumerate} 
Statement \ref{enum:global_inclusion_tensor} is equivalent to statement \ref{enum:global_inclusion_M2} above.
Hence, in order to prove the implication it suffices to prove that statement \ref{enum:local_inclusion_M2} implies \ref{enum:local_inclusion_tensor}. Take $\tau \in \overbar{T_\omega(A)}^{w^*}$ and denote its restriction to $A$ by $\sigma$.
The canonical map $A \rightarrow A^{\omega,\rm b}$ induces an equivariant $*$-homomorphism $(\pi_{\sigma}^\alpha(A),\alpha) \rightarrow (\pi_\tau^{\alpha^\omega}(A^{\omega,\rm b})'',\alpha^\omega)$ that is continuous on the unit ball with respect to the strong operator topology.
Hence, it can be extended to a unital equivariant $*$-homomorphism $\pi_{\sigma}^\alpha(A)'' \rightarrow \pi_\tau^{\alpha^\omega}(A^{\omega,\rm b})''$.
Combining this with \ref{enum:local_inclusion_M2}, this means we can find a unital $*$-homomorphism $M_n \rightarrow ((\pi_\tau^{\alpha^\omega}(A^{\omega,\rm b})'')^\kappa)^{(\alpha^\omega)^\kappa}$. This ends the proof.
\end{proof}

\begin{theorem} \label{theorem:Gamma-equivalent-to-Z-stable}
Let $A$ be a separable, simple, nuclear $\mathcal Z$-stable \cstar-algebra such that $T^+(A) \neq \emptyset$.
Let $\alpha: G \acts A$ be an action of a countable discrete amenable group.
If $\alpha$ has equivariant property Gamma, then $\alpha$ is equivariantly $\mathcal{Z}$-stable.   
\end{theorem}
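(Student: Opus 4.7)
The plan is to apply Theorem \ref{theorem:reduction_equivariant_Z-stability} in order to reduce the claim to producing unital $*$-homomorphisms $M_n\to (A^\omega\cap A')^{\alpha^\omega}$ for each $n\geq 2$, and then to transfer the problem via Theorem \ref{theorem:gamma_equivalence} to the induced action $\alpha^\omega$ on $A^\omega\cap A'$, which is automatically unital. This sidesteps the potential non-compactness of $T(A)$ and opens the door to the local-to-global machinery of Section 4. Since $A$ is $\mathcal{Z}$-stable (hence has strict comparison), simple, separable and non-elementary, $\alpha$ will have equivariant property (SI) by \cite[Corollary 4.3]{Szabo21si}, and Theorem \ref{theorem:reduction_equivariant_Z-stability} will then deliver the desired reduction.

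Next, note that $A$ has stable rank one (being simple, $\mathcal{Z}$-stable and finite), and equivariant uniform property Gamma for $\alpha$ trivially entails (non-equivariant) uniform property Gamma for $A$. Hence Theorem \ref{theorem:gamma_equivalence} will yield that $\alpha^\omega\colon G\acts A^\omega\cap A'$ has local equivariant uniform property Gamma with respect to bounded traces. As $A^\omega\cap A'$ is unital (a quotient of $F_\omega(A)$), the set $T(A^\omega\cap A')$ is compact, and Lemma \ref{lemma:equivalent_absorption_criteria} will apply to the system $(A^\omega\cap A',\alpha^\omega)$. Using a second ultrafilter $\kappa$, this reduces the existence of a unital $*$-homomorphism $M_n\hookrightarrow ((A^\omega\cap A')^{\kappa,\mathrm{b}})^{(\alpha^\omega)^\kappa}$ to the trace-by-trace condition that for every $\tau\in T(A^\omega\cap A')$ there is a unital $*$-homomorphism $M_n\to\bigl((\pi_\tau^{\alpha^\omega}(A^\omega\cap A')'')^\kappa\bigr)^{(\alpha^\omega)^\kappa}$.

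By Proposition \ref{prop:tracesF(A)}, every such $\tau$ is a canonical trace, lying in the weak-$*$-closed convex hull of traces $\sigma_a$ parametrized by $\sigma\in\tilde{T}_\omega(A)$ and $a\in A_+$ with $\sigma(a)=1$. For each such canonical trace, I would identify the GNS von Neumann algebra $\pi_{\sigma_a}^{\alpha^\omega}(A^\omega\cap A')''$, up to cut-down by $a$, with an appropriate tracial ultrapower of $\pi_\sigma^\alpha(A)''$. Nuclearity of $A$ will guarantee that these von Neumann algebras are injective, while simplicity and non-elementarity force them to be of type II. By Connes' theorem \cite{Connes76} they will absorb $\mathcal{R}$ tensorially, so the main result \cite[Theorem A]{SzaboWouters23md} will provide the desired unital equivariant $*$-homomorphisms from $M_n$ into the relevant ultrapowers.

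Assembling the local data via Lemma \ref{lemma:equivalent_absorption_criteria} will produce a unital $*$-homomorphism $M_n\to ((A^\omega\cap A')^{\kappa,\mathrm{b}})^{(\alpha^\omega)^\kappa}$, from which a standard reindexation argument based on Kirchberg's $\e$-test (exploiting the finite-dimensionality of $M_n$) will descend to a unital $*$-homomorphism $M_n\to (A^\omega\cap A')^{\alpha^\omega}$, closing the argument via Theorem \ref{theorem:reduction_equivariant_Z-stability}. The main obstacle will be the precise identification carried out in the preceding paragraph, namely of $\pi_{\sigma_a}^{\alpha^\omega}(A^\omega\cap A')''$ as a (cut-down) tracial ultrapower of $\pi_\sigma^\alpha(A)''$, since it is this step that enables invocation of the equivariant McDuff theorem; the rest is a routine orchestration of the machinery developed in the preceding sections.
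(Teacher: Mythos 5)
Your overall skeleton agrees with the paper's proof: reduce via equivariant property (SI) and Theorem \ref{theorem:reduction_equivariant_Z-stability} to finding unital $*$-homomorphisms $M_n\to (A^\omega\cap A')^{\alpha^\omega}$, transfer to the unital system $(A^\omega\cap A',\alpha^\omega)$ via Theorem \ref{theorem:gamma_equivalence}, and then invoke Lemma \ref{lemma:equivalent_absorption_criteria} together with a reindexation over the second ultrafilter $\kappa$. The divergence --- and the gap --- is in how you verify the trace-by-trace hypothesis of Lemma \ref{lemma:equivalent_absorption_criteria}. That lemma requires the local statement for \emph{every} $\tau\in T(A^\omega\cap A')$, whereas Proposition \ref{prop:tracesF(A)} only tells you that such $\tau$ lie in the weak-$*$-closed \emph{convex hull} of the traces $\sigma_a$; it does not say every $\tau$ is of the form $\sigma_a$. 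Your proposed identification of $\pi_{\sigma_a}^{\alpha^\omega}(A^\omega\cap A')''$ with a cut-down tracial ultrapower of $\pi_\sigma^\alpha(A)''$ therefore could at best treat the generating traces, not arbitrary limits of convex combinations of them, and there is no obvious way to pass the conclusion through weak-$*$ limits. Moreover, even for a single $\sigma_a$ the identification you flag as ``the main obstacle'' is not merely technical: $A^\omega\cap A'$ is not nuclear even when $A$ is, so injectivity of its GNS closures does not follow from nuclearity of $A$, and relating the GNS closure of the \emph{C*-algebraic} central sequence algebra to a von Neumann algebraic ultrapower or relative commutant is exactly the kind of statement that is delicate (and closely tied to property Gamma itself). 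As written, the argument does not close.

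The paper avoids both problems by never analyzing the structure of an individual $\tau\in T(A^\omega\cap A')$. Instead, it uses that $A$ has (non-equivariant) uniform property Gamma to produce, via the hereditary-subalgebra trick of Proposition \ref{prop:tracesF(A)} and \cite[Theorem 4.6]{CETW22}, unital $*$-homomorphisms $M_2\to A^\omega\cap A'$ commuting with any prescribed separable subset. Iterating this and closing up under the $G$-action yields, inside $N_\tau=\pi_\tau^{\alpha^\omega}(A^\omega\cap A')''$ for an \emph{arbitrary} $\tau$, a $\|\cdot\|_{2,\tau}$-separable $\alpha^\omega$-invariant von Neumann subalgebra $\mathcal{B}$ that is McDuff by construction (\cite[Corollary 3.8]{SzaboWouters23md}); then \cite[Theorem A]{SzaboWouters23md} applied to the restricted action on $\mathcal{B}$ gives the required unital $*$-homomorphism $M_n\to(\mathcal{B}^\kappa)^{\beta^\kappa}\subseteq\bigl((\pi_\tau^{\alpha^\omega}(A^\omega\cap A')'')^\kappa\bigr)^{(\alpha^\omega)^\kappa}$. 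Note that Connes' theorem and injectivity play no role in this general case (they are only used in the special case, Theorem \ref{theorem:Gamma-equivalent-to-Z-stable-special-case}, where one works directly with $\pi_\tau^\alpha(A)''$ for $\tau\in T(A)$). You should replace your third paragraph with an argument of this type, or else supply a genuine proof of the GNS identification for all canonical traces together with a limiting argument over the closed convex hull.
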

\begin{proof}
Combining \cite[Corollary 4.3]{Szabo21si} with Theorem \ref{theorem:reduction_equivariant_Z-stability}, we see that given $n\geq 2$, it suffices to construct a unital $*$-homomorphism $M_n \rightarrow (A^\omega\cap A')^{\alpha^\omega}$. 
For convenience, let us specify a (possibly different) free ultrafilter $\kappa$ on $\N$. 
It suffices to show that we can construct a unital $*$-homomorphism $M_n \rightarrow \big((A^\omega \cap A')^{\kappa, \mathrm{b}}\big)^{(\alpha^\omega)^\kappa}$, as a reindexation trick will then yield the required unital $*$-homomorphism $M_n \rightarrow (A^\omega\cap A')^{\alpha^\omega}$. 
By Theorem \ref{theorem:gamma_equivalence}, we conclude that $\alpha^\omega= G \acts A^\omega \cap A'$ has local equivariant property Gamma w.r.t.\ bounded traces. 
Thus, by Lemma \ref{lemma:equivalent_absorption_criteria} it suffices to prove that for all $\tau \in T(A^\omega\cap A')$ there exists a unital $*$-homomorphism $\varphi^\tau: M_n \rightarrow \big( (\pi_\tau^{\alpha^\omega}(A^\omega\cap A')'')^{\kappa} \big)^{(\alpha^\omega)^\kappa}$.
We note that the \cstar-algebra $A$ has uniform property Gamma.
Using the same trick as in the proof of Proposition \ref{prop:tracesF(A)}, we conclude that $A^\omega\cap A'\cong B^\omega\cap B'$ for a hereditary subalgebra $B\subset A\otimes\mathbb K$ such that $T^+(B)=\mathbb{R}^{>0} T(B)$ and $T(B)$ is compact.
Using \cite[Theorem 4.6]{CETW22}, we can hence conclude that there exists a unital $*$-homomorphism $M_2\to A^\omega\cap A'$.
By a standard reindexation trick, we can argue that such a $*$-homomorphism can be chosen to additionally commute with any specified separable subset of $A^\omega\cap A'$.

Fix $\tau \in T(A^\omega\cap A')$.
We show first that $N_\tau:= \pi_\tau^{\alpha^\omega}(A^\omega\cap A')''$ contains a $\|\cdot\|_{2,\tau}$-separable, $\alpha^\omega$-invariant von Neumann subalgebra that tensorially absorbs the hyperfinite II$_1$-factor.
By the aforementioned property of $A$, we can find a unital embedding $\phi_1: M_2 \rightarrow \pi_\tau^{\alpha^\omega}(A^\omega\cap A') \subseteq N_\tau$. 
Set
\[
B_1 := C^*\Big( \bigcup_{g\in G} \alpha^\omega_g(\phi_1(M_2)) \Big).
\]
Using that $B_1$ is a separable subquotient of $A^\omega\cap A'$, we can use the aforementioned property of $A$ again and and find a unital embedding 
\[
\phi_2: M_2 \rightarrow \pi_\tau^{\alpha^\omega}(A_\omega\cap A')\cap B_1' \subseteq N_\tau\cap B_1'.
\]
Set
\[
B_2 := C^*\Big( B_1\cup \bigcup_{g\in G} \alpha^\omega_g(\phi_2(M_2)) \Big).
\]
Carry on with this procedure inductively, i.e., given the \cstar-algebra $B_i\subset \pi_\tau^{\alpha^\omega}(A^\omega\cap A')$, find a unital $*$-homomorphism $\phi_{i+1}: M_2 \rightarrow \pi_\tau^{\alpha^\omega}(A^\omega\cap A')\cap B_i'$ and set
\[
B_{i+1} := C^*\Big( B_i \cup \bigcup_{g\in G} \alpha^\omega_g(\phi_{i+1}(M_2)) \Big).
\]
Define $\mathcal{B}:=\overline{\cup_{i \in \N} B_i}^{\|\cdot\|_{2,\tau}}\subset N_\tau.$
Then $\mathcal{B}$ is a $\|\cdot\|_{2,\tau}$-separable, $\alpha^\omega$-invariant von Neumann subalgebra of $N_\tau$ such that additionally $\mathcal{B} \cong \mathcal{B} \bar{\otimes} \mathcal{R}$ by \cite[Corollary 3.8]{SzaboWouters23md} because it satisfies the McDuff-type criterion (existence of a unital $*$-homomorphism $\mathcal{R} \rightarrow \mathcal{B}_\omega$)
by construction.
Denote the restriction of $\alpha^\omega$ to $\mathcal{B}$ by $\beta$.
By \cite[Theorem A]{SzaboWouters23md}, it follows that $\beta$ is cocycle conjugate to $\beta\otimes \mathrm{id}_\mathcal{R}$. 
In particular we can find a unital $*$-homomorphism 
\[
M_n \rightarrow (\mathcal{B}^{\kappa})^{\beta^\kappa} \subset \big( (\pi_\tau^{\alpha^\omega}(A^\omega\cap A')'')^{\kappa} \big)^{(\alpha^\omega)^\kappa}.
\]
This ends the proof.
\end{proof}

\end{document}